\begin{document}

\newtheorem{theorem}{Theorem}
\newtheorem{proposition}{Proposition}
\newtheorem{lemma}{Lemma}
\newtheorem{corollary}{Corollary}
\newtheorem{definition}{Definition}
\newtheorem{remark}{Remark}
\newcommand{\tex}{\textstyle}
\numberwithin{equation}{section} \numberwithin{theorem}{section}
\numberwithin{proposition}{section} \numberwithin{lemma}{section}
\numberwithin{corollary}{section}
\numberwithin{definition}{section} \numberwithin{remark}{section}
\newcommand{\ren}{\mathbb{R}^N}
\newcommand{\re}{\mathbb{R}}
\newcommand{\n}{\nabla}
\newcommand{\p}{\partial}
\newcommand{\iy}{\infty}
\newcommand{\pa}{\partial}
\newcommand{\fp}{\noindent}
\newcommand{\ms}{\medskip\vskip-.1cm}
\newcommand{\mpb}{\medskip}
\newcommand{\AAA}{{\bf A}}
\newcommand{\BB}{{\bf B}}
\newcommand{\CC}{{\bf C}}
\newcommand{\DD}{{\bf D}}
\newcommand{\EE}{{\bf E}}
\newcommand{\FF}{{\bf F}}
\newcommand{\GG}{{\bf G}}
\newcommand{\oo}{{\mathbf \omega}}
\newcommand{\Am}{{\bf A}_{2m}}
\newcommand{\CCC}{{\mathbf  C}}
\newcommand{\II}{{\mathrm{Im}}\,}
\newcommand{\RR}{{\mathrm{Re}}\,}
\newcommand{\eee}{{\mathrm  e}}
\newcommand{\LL}{L^2_\rho(\ren)}
\newcommand{\LLL}{L^2_{\rho^*}(\ren)}
\renewcommand{\a}{\alpha}
\renewcommand{\b}{\beta}
\newcommand{\g}{\gamma}
\newcommand{\G}{\Gamma}
\renewcommand{\d}{\delta}
\newcommand{\D}{\Delta}
\newcommand{\e}{\varepsilon}
\newcommand{\var}{\varphi}
\newcommand{\lll}{\l}
\renewcommand{\l}{\lambda}
\renewcommand{\o}{\omega}
\renewcommand{\O}{\Omega}
\newcommand{\s}{\sigma}
\renewcommand{\t}{\tau}
\renewcommand{\th}{\theta}
\newcommand{\z}{\zeta}
\newcommand{\wx}{\widetilde x}
\newcommand{\wt}{\widetilde t}
\newcommand{\noi}{\noindent}
\newcommand{\uu}{{\bf u}}
\newcommand{\xx}{{\bf x}}
\newcommand{\yy}{{\bf y}}
\newcommand{\zz}{{\bf z}}
\newcommand{\aaa}{{\bf a}}
\newcommand{\cc}{{\bf c}}
\newcommand{\jj}{{\bf j}}
\newcommand{\ggg}{{\bf g}}
\newcommand{\UU}{{\bf U}}
\newcommand{\YY}{{\bf Y}}
\newcommand{\HH}{{\bf H}}
\newcommand{\GGG}{{\bf G}}
\newcommand{\VV}{{\bf V}}
\newcommand{\ww}{{\bf w}}
\newcommand{\vv}{{\bf v}}
\newcommand{\hh}{{\bf h}}
\newcommand{\di}{{\rm div}\,}
\newcommand{\ii}{{\rm i}\,}
\newcommand{\inA}{\quad \mbox{in} \quad \ren \times \re_+}
\newcommand{\inB}{\quad \mbox{in} \quad}
\newcommand{\inC}{\quad \mbox{in} \quad \re \times \re_+}
\newcommand{\inD}{\quad \mbox{in} \quad \re}
\newcommand{\forA}{\quad \mbox{for} \quad}
\newcommand{\whereA}{,\quad \mbox{where} \quad}
\newcommand{\asA}{\quad \mbox{as} \quad}
\newcommand{\andA}{\quad \mbox{and} \quad}
\newcommand{\withA}{,\quad \mbox{with} \quad}
\newcommand{\orA}{,\quad \mbox{or} \quad}
\newcommand{\atA}{\quad \mbox{at} \quad}
\newcommand{\onA}{\quad \mbox{on} \quad}
\newcommand{\ef}{\eqref}
\newcommand{\mc}{\mathcal}
\newcommand{\mf}{\mathfrak}

\newcommand{\ssk}{\smallskip}
\newcommand{\LongA}{\quad \Longrightarrow \quad}
\def\com#1{\fbox{\parbox{6in}{\texttt{#1}}}}
\def\N{{\mathbb N}}
\def\A{{\cal A}}
\newcommand{\de}{\,d}
\newcommand{\eps}{\varepsilon}
\newcommand{\be}{\begin{equation}}
\newcommand{\ee}{\end{equation}}
\newcommand{\spt}{{\mbox spt}}
\newcommand{\ind}{{\mbox ind}}
\newcommand{\supp}{{\mbox supp}}
\newcommand{\dip}{\displaystyle}
\newcommand{\prt}{\partial}
\renewcommand{\theequation}{\thesection.\arabic{equation}}
\renewcommand{\baselinestretch}{1.1}
\newcommand{\Dm}{(-\D)^m}

\title
{\bf Steady states, global existence and blow-up for fourth-order
semilinear parabolic equations of Cahn--Hilliard type}

\author{Pablo~\'Alvarez-Caudevilla and Victor~A.~Galaktionov}

\address{Universidad Carlos III de Madrid,
Av. Universidad 30, 28911-Legan\'es, Spain -- Work phone number: +34-916249099}
\email{pacaudev@math.uc3m.es}

\address{Department of Mathematical Sciences, University of Bath,
 Bath BA2 7AY, UK}
\email{vag@maths.bath.ac.uk}

\keywords{Stable and unstable Cahn--Hilliard  equations, steady
states, global solutions, Type I and II blow-up}

\thanks{The first author is partially supported by the Ministry of Science and Innovation of
Spain under the grant MTM2009-08259.}

 \subjclass{35K55, 35K40}
\date{\today}





\begin{abstract}

Fourth-order semilinear parabolic equations of the
Cahn--Hilliard-type
 \be
 \label{01}
 u_t + \D^2 u = \g u  \pm \D (|u|^{p-1}u) \inB \O \times
 \re_+,
  \ee
  are considered in a smooth bounded domain
   $\O \subset
  \ren$ with Navier-type boundary conditions
   on $\p \O$, or $\O = \ren$, where $p>1$ and $\g$ are given real parameters.
 The sign $``+"$ in the ``diffusion term" on the right-hand side  means the stable
 case, while $``-"$ reflects the unstable (blow-up) one,
 with the simplest, so called {\em limit}, canonical model for
 $\g=0$,
  \be
  \label{02}
  u_t + \D^2 u= \pm \D(|u|^{p-1}u) \inA.
   \ee

   The following three main problems
  are studied:

  (i) for the unstable model \ef{01}, with the $- \D (|u|^{p-1}u)$,
   existence and multiplicity of classic steady states in $\O \subset \ren$ and their global
  behaviour for large $\g>0$;

  (ii) for the stable model \ef{02},
  global existence of smooth solutions $u(x,t)$ in $\ren \times
  \re_+$ for bounded initial data $u_0(x)$ in the subcritical case
   $p \le p_{*}= 1 + \frac {4}{(N-2)_+}$; and

  (iii) for the unstable model \ef{02},  a relation between finite time blow-up and structure of
 regular and  {\em singular} steady states in the supercritical range.
In particular, three distinct families of Type I and II blow-up
patterns are
 introduced in the unstable case.

\end{abstract}

\maketitle




\section{Introduction and motivation for main problems: steady states, global existence,
 and blow-up}
 \label{S1}

\subsection{Models and preliminaries}

\noindent In this paper, we study some properties of solutions of
the following {\em fourth-order  parabolic equation of the
Cahn--Hilliard (C--H) type}:
\begin{equation}
\label{11}
    u_{t} +\Delta^2 u = \g u \pm\Delta(|u|^{p-1}u)\, \quad
    \hbox{in}\quad \O\times \re_+ \whereA p>1, \,\,\,\g \in \re,
\end{equation}
with homogeneous Navier-type boundary conditions and bounded
smooth initial data,
\begin{equation}
\label{12}
    u=\D u=0 \quad \hbox{on}\quad \p\O, \qquad u(x,0)=u_0(x) \quad \hbox{in}\quad \O.
\end{equation}
\noi We assume that $\O$ is a bounded domain of $\ren$, $N\geq 1$,
with smooth boundary $\p\O$ of class $\mathcal{C}^{2+\nu}$ for
some $\nu\in (0,1)$. Here, \eqref{11} is a {\em semilinear}
parabolic equation with the only nonlinearity entering as a
second-order diffusion-like operator. The sign $``+"$ in the
``diffusion term" on the right-hand side of \ef{11} corresponds to
the stable
 case, while $``-"$ reflects the unstable (blow-up) one.

\ssk

 Firstly, we obtain  existence and multiplicity results for the
steady-states of the unstable CH equation \eqref{11} based on a
combination of analytical methods. Namely, we use variational
methods, such as the fibering approach, and, based on potential
operators,  Lusternik--Schnirel'man category--genus theory, and
others, such as homotopy approaches or perturbation theory.  We
specifically obtain that, depending on the parameter $\g$, there
exists a different number of stationary solutions.


\ssk

 Secondly, using scaling blow-up methods, global existence and uniqueness
of global classical bounded solutions for  the stable
\emph{Cahn--Hilliard equation} \ef{11}, with the $+ \D
(|u|^{p-1}u)$, in $\ren \times \re_+$,
are shown to exist up to a critical exponent $p_*= 1 + \frac
{4}{N-2} \big(\equiv \frac {N+2}{N-2}=p_{\rm S}\big)$, $N \ge 3$,
or $p_*=+\iy$ for $N=1,\,2$.

\ssk

 Thirdly, in the last part of the paper, different types of blow-up
solutions  are analysed  for the unstable \emph{Cahn--Hilliard
equation} \ef{11}, with $- \D (|u|^{p-1}u)$, by
using the similarity profiles associated with  this unstable
equation. This methodology will provide us with a direct
connection with the previous analysis carried out for the
multiplicity of a variational problem. However, in this particular
case the problem is not variational so a homotopy/perturbation
analysis must be performed.

\ssk

Throughout this paper, we also state and leave several open
difficult mathematical problems for these nonlinear problems and
other similar ones.

\ssk

 There are a huge  amount of publications related to  equations
such as \eqref{11}. Among other models, the most popular and
detailed studied ones are
the Cahn--Hilliard and Sivashinsky-type equations.
 We refer to papers \cite{AP, UnCh, ANC1} and
 to surveys in \cite{GVSur02, EGW}, where necessary aspects
 of global existence and blow-up of solutions for \ef{11} are
 discussed in sufficient detail.

Particularly,
the Sivashinsky equation is analyzed in studying phase turbulence
in fluids, thermal instabilities of flame fronts, the directional
solidification in alloys or the interface instability during the
application of industrial beam cutting techniques.
As an example, let us mention an interesting result in the context
of directional solidification of a dilute binary alloy that appears in
Novick-Cohen--Grinfeld \cite{NC-G}, where the steady-states of the
\emph{Sivashinsky equation}
\begin{equation}
\label{app1}
    u_t = \D ( u^2 -u -\e^2 \D u)-\a u \withA \a>0,
\end{equation}
were analyzed, focusing, specifically, on the problem of
multiplicity of solutions, which is also one of  the main topics
in this paper.

Moreover, the classic \emph{Cahn--Hilliard equation}  describes
the dynamics of a pattern formation in phase transition in alloys,
glasses, and polymer solutions. This equation has been extensively
studied in the past years but many questions still remain
unanswered. See below discussions and details about applications
and characteristics of these Cahn--Hilliard equations--type.


\subsection{Main results of the paper and layout}

Sections \ref{S2} is devoted to preliminaries about Cahn--Hilliard
equations where discussions about applications and specific
analytical characteristics
 of these equations are carried out.

 In Section
\ref{S3}, we study smooth stationary solutions of \ef{11} via the
so called {\em fibering method}, obtaining existence  and
multiplicity results for such steady states of the problem
\eqref{11}. In other words, the solutions of the parameter
dependent semilinear elliptic equation
\begin{equation}
\label{15}
    -\Delta^2 u + \g u \pm \Delta (|u|^{p-1}u)=0\, \quad
    \hbox{in}\quad \O,
    \quad u= \D u=0 \quad \mbox{on} \quad \p \O,
\end{equation}
which are now regarded as steady-states of the evolution equation
\eqref{11}. In particular, we obtain that, depending on the value
of the parameter $\g$, the unstable Cahn-Hilliard equation
$\eqref{15}_{-}$ (with the minus sign in $\D$)  possesses one or several
solutions or no solutions at all. The results can be summarized
as follows:

\begin{itemize}
\item If the parameter $\g\leq  K \l_1$,  with $K>0$ a positive constant and $\l_1>0$ the first eigenvalue
of the bi-harmonic operator, i.e., $\D^2 \varphi_1=\l_1
\varphi_1$, then there exists at least one  solution for the
Cahn-Hilliard equation $\eqref{15}_{-}$; and;

\item When the parameter is greater than the  first eigenvalue of the bi-harmonic
equation $\l_1$, multiplied by the positive constant $K$, then
there will not be any solution at all, if one assumes only
positive solutions. However,
 for oscillatory solutions of changing sign the number of possible solutions increases with the value of the parameter $\g$. In fact, when the parameter
 $\g$ goes to infinity, one has an arbitrarily large number of distinct solutions.
\end{itemize}

In Section \ref{S5}, we return to the original parabolic problem
of the type \ef{11}, and now, we concentrate on the problem of
global existence and uniqueness of global classical bounded
solutions in the simplest canonical {\em limit stable
Cahn--Hilliard equation},
 \be
 \label{sCH}
  u_t= - \D^2 u + \D ( |u|^{p-1} u) \inB \ren \times \re_+,
   \quad u(x,0)=u_0(x) \inB \ren.
   \ee
Since our goal is to establish sufficient conditions of {\em
non-blow-up} of solutions at any point in the $\{x,t\}$-space, we
consider the Cauchy problem in $\ren \times \re_+$ with smooth
bounded initial data, i.e., we take $\O = \ren$. We prove, using a standard scaling method in Nonlinear PDE theory,  that
global unique solutions (non-blow-up solutions in finite time) of
\ef{sCH} exist in the subcritical (in fact, Sobolev) range
 \be
  \label{sub1}
   \tex{
    1<p \le p_*= 1 + \frac {4}{N-2} \big(\equiv \frac {N+2}{N-2}=p_{\rm S}\big), \quad N \ge 3
     \quad (p_*=+\iy\,\,\,\mbox{for}\,\,\,N=1,\,2),
     }
     \ee
showing the existence of uniform a priori bounds
 $$
 |u(x,t)| \le C \inB
\ren \times \re_+.
  $$
   In addition, by the same scaling technique,
we prove that the non-autonomous C--H equation with
$a(x)=|x|^\a>0$ for all $x \ne 0$, $\a>0$,
 \be
 \label{sCHa}
  u_t= - \D^2 u + \D (|x|^\a |u|^{p-1} u) \inB \ren \times \re_+,
   \quad u(x,0)=u_0(x) \inB \ren
   \ee
 {\em does not admit a localized blow-up at the origin $x=0$} in
a larger parameter range
 \be
  \label{sub1a}
   \tex{
    1<p \le p_*(\a)= 1 + \frac {2(\a+2)}{N-2}, \quad N \ge 3.
     }
     \ee
However, this does not prohibit a possible blow-up at some $x_0
\not = 0$, at which the range \ef{sub1} puts in charge again.
Moreover, in the supercritical range $p>p_*$, we observe that
those a priori bounds cannot be obtained through the techniques
used above for the parameter range \eqref{sub1}. Therefore, one
cannot avoid the possibility of existence of blow-up solutions in
this particular supercritical range.

\ssk

Concerning the related  {\em limit unstable C--H equation}
\begin{equation}
\label{ch6}
    u_{t} = - \D^2 u -\D(\left|u\right|^{p-1}u)\,,
\end{equation}
it is well known for a long period (see a blow-up survey
\cite{GVSur02}) that solutions can blow-up for any $p>1$.
Moreover, in general, there exists a countable family of various
self-similar blow-up solutions, which turned out to be positive at
the critical (Fujita) exponent \cite{EGW}
  \be
  \label{p01}
  \tex{
  p=p_0=1+ \frac 2N.
  }
  \ee
Such mass-conserving  solutions, which blow-up as $t \to T^-<\iy$,
can admit similarity extensions beyond, i.e., for $t>T$ (see
\cite{GalJMP} for the case \ef{p01}),
 though there remain some difficult open mathematical problems.
 This
can be compared to Leray's argument of 1934 for the extension
for $t>T$ of self-similar blow-up solutions (as $t \to T^-$) for
the Navier--Stokes equation in $\re^3$; see precise Leray's
statements, references, and related comments  in
\cite[\S~2.2]{GalNSE}\footnote{Since 1996, self-similar (Type-I)
blow-up for Navier--Stokes equations was ruled out (see main
references in \cite[\S~1.1]{GalNSE}), so an unknown and a more
complicated Type-II one seems to be necessary. Fortunately, for
the C--H equation \ef{ch6}, self-similar behaviour exists in both
limits $t \to T^\pm$.}.

Thus, in other words, for the {\em unstable} C--H models, the
critical exponent is $p_*=1$, so that the problem of existence and
nonexistence of  blow-up becomes irrelevant.

In Sections \ref{S7} and \ref{S.10}, as a unified issue concerning
studied above stationary and global solutions of \ef{ch6},
 we will try to connect possible blow-up of
solutions with some features of the structure of regular and {\em
singular} steady states, which are not bounded in $L^\iy$. The
first blow-up type solutions of \ef{ch6} under scrutiny in these
sections  are the ones obtained at the critical Sobolev exponent
defined as in \ef{sub1}.
    The main idea behind
this blow-up patterns is that the blow-up can occur via some kind
of ``slow" motion about its stationary solutions, to be explained
in detailed later on.

Furthermore, for the final type of blow-up patterns of the
unstable C--H equation, we shall adapt the techniques used to
obtain blow-up patterns
 for the nowadays classic {\em semilinear heat equation}
 $$
 u_t=-\D u +|u|^{p-1} u,
 $$
 in order to ascertain these final and new blow-up structures. To do so, we are required to
  construct special  spectral theory of linear rescaled
  operators involved. In particular,  such a spectral analysis
  requires {\em generalized Hermite polynomial eigenfunctions}
 of the ``adjoint" linear fourth-order operator
 $$
 \tex{
 {\bf B}^* \equiv -\D^2 -\frac{1}{4}\, y\cdot \nabla \inB \ren.
   }
   $$
 Indeed,  our blow-up patterns, in this particular case and in the rescaled form,
  will be a solution of the rescaled equation
 $$
 Y_t = \hat{{\bf B}}^* Y +D(Y) \inB \ren \times \re_+,
   $$
 where $D(Y)$ is a quadratic perturbation of the operator $\hat{{\bf B}}^*$ as $Y\rightarrow 0$
and
$$
 \tex{
\hat{{\bf B}}^*= -\D^2 -c \D(\frac{1}{y^2}I)-\frac{1}{4}\, y\cdot
\nabla -\frac{1}{2(p-1)}\, I,
 }
$$
 with $c$ being a  certain constant. As a result, we discuss
{\em three types} of blow-up for the unstable
 C--H equation \ef{ch6}.



\setcounter{equation}{0}
\section{Preliminary discussions of  Cahn--Hilliard equations}
 \label{S2}

In its origins, the Cahn--Hilliard equation was proposed as a
continuum model for the description of the dynamics of pattern
formation in phase transition. When a binary solution is cooled
sufficiently, phase separation may occur and then proceed in two
ways: either nucleation, in which nuclei of the second phase
appear randomly and grow, or, in the so-called spinodal
decomposition, the whole solution appears to nucleate at once and
then periodic or semi-periodic structures appear. Pattern
formation resulting from phase transition has been observed in
alloys, glasses, and polymer solutions. From the mathematical
point of view, this equation involves a fourth order elliptic
operator and it contains a negative viscosity term. The unknown
function is a scalar $u=u(x,t)$, $x\in \ren$, $t\in\re_+$ and the
equation reads
\begin{equation}
\label{ch1}
    u_{t} - \D K(u)=0\quad \hbox{in} \quad \ren \times \re_+\,
    \whereA
    K(u):= -\nu \D u + f(u), \quad \nu >0,
\end{equation}
and the function $f(u)$ is a polynomial of the order $2p-1$,
\begin{equation*}
 \tex{
    f(u):= \sum_{j=1}^{2p-1} a_j u^j, \quad p\in \N,\;\; p\geq 2.
    }
\end{equation*}
In particular, the so-called Cahn--Hilliard equation corresponds
to the case $p=2$ and
\begin{equation*}
    f(u):= -\eta u+ \mu u^3, \quad \eta,\mu >0.
\end{equation*}
In the case we consider the problem in an open bounded domain $\O$ of
$\ren$, with a smooth boundary $\G:=\p \O$, we can suppose the
following boundary conditions. Either Neumann boundary conditions
\begin{equation*}
 \tex{
    \frac{\p u}{\p {\bf n}}= -\frac{\p }{\p {\bf n}}K(u)=0 \quad \hbox{on} \quad \G,
    }
\end{equation*}
where ${\bf n}$ is the unit normal forward to $\G$ (or Dirichlet
boundary conditions). Or, assuming that $\O= \prod_{i=1}^N
(0,L_i)$, $L_i>0$, the periodic boundary condition
\begin{equation*}
    \var|_{x_i =0} = \var|_{x_i = L_i}, \quad i=1,\cdots,N,
\end{equation*}
for $u$ and the derivatives of $u$ at least of order $\leq 3$. The
problem \eqref{ch1} can be completed with the initial-value
conditions
\begin{equation*}
    u(x,0)=u_0(x), \quad x\in \O.
\end{equation*}
The weak formulation of the problem is obtained by multiplying
\eqref{ch1} by  a test function $\var \in \mc{C}_0^\infty(\O)$,
integrating in $\O$ and applying the formula of integration by
parts,
\begin{equation}
\label{ch2}
 \tex{
    \int\limits_\O u_t \var + \int\limits_\O \nu \D u \D\var - \int\limits_\O f(u) \D\var = 0.
    }
\end{equation}
Integrating again by parts yields
\begin{equation}
\label{ch3}
 \tex{
    \int\limits_\O u_t \var + \int\limits_\O \nu \D u \D\var + \int\limits_\O f'(u)
    \nabla u \cdot \nabla\var = 0 \whereA f':=\frac{{\mathrm d}f}{{\mathrm d}u}.
 }
\end{equation}

The dynamical system \eqref{ch1} is {\em gradient}  and admits a
Lyapunov function of the form
\begin{equation}
\label{ch4}
 \tex{
    E[u](t):= \frac{\nu}{2} \int\limits_\O |\nabla u|^2 + \int\limits_\O
    g(u), }
\end{equation}
where $g(u)$ is the primitive of $f(u)$ and assuming that $u$ is a
sufficiently regular solution of the problem. Multiplying
\eqref{ch1} by $K(u)$, integrating in $\O$, and applying the
formula of integration by parts we find that
 $$
\begin{matrix}
 \tex{
    \int\limits_\O K(u) u_t = -\nu \int\limits_\O \D u \cdot u_t + \int\limits_\O f(u) u_t=
    \frac{\mathrm d}{{\mathrm d}t}E[u](t), \quad \mbox{and}
 } \\
   \tex{ -\int\limits_\O \D K(u) K(u) = -\int\limits_\G
    \frac{\p K(u)}{\p \nu} K(u) \,{\mathrm d}\G +\int\limits_\O |\nabla K(u)|^2=
    \int\limits_\O |\nabla K(u)|^2.}
\end{matrix}
 $$
Therefore,
\begin{equation}
\label{ch5}
 \tex{
    \frac{\mathrm d}{{\mathrm d}t}E[u](t)+ \int\limits_\O |\nabla K(u)|^2=0,}
\end{equation}
 so that the Lyapunov function \eqref{ch4} is monotone decreasing
in time. It should be pointed out that these properties can be
accomplished when $\O = \ren$.
\par
Furthermore, we would finally also like to note that when \eqref{11} is reduced to a standard
Cahn--Hilliard equation, the {\em limit} unstable Cahn--Hilliard
equation \ef{ch6} was studied  in \cite{EGW}
connecting this model with various applications. In particular,
if $N=1,2$ and $p=3$, it arises as the limit case of the
phenomenological unstable \emph{Cahn--Hilliard equation}
\begin{equation*}
    u_{t} = -(\g u_{xx} - u^3 +\g_1 u)_{xx}- \g_2 u\,.
\end{equation*}
It is also a reduced model from solidification theory with $N=1$ or 2 and $p=2$
 (see \cite{BB, NC}).
Equations of this form arise in the theory of thermo-capillary
flows in thin layers of viscous fluids with free boundaries and an
anomalous dependence of the surface tension coefficient on
temperature (see \cite{AP, F}). Also, equation \eqref{ch6} occurs
passing to the limit as $\g \rightarrow 0^+$ in the Cahn--Hilliard
equation
\begin{equation}
\label{ch7}
    u_{t} = \nabla \cdot (\nabla (F(u)- \D u))\,,
\end{equation}
with a standard double-well potential function of the form
\begin{equation*}
    F(u)= \left|u\right|^{p-1}u - \g \left|u\right|^{p}u \whereA
    \g>0.
\end{equation*}
\par
Another important class of fourth-order models related to \eqref{ch6} comes from the theory
of thin films and general long-wave unstable equations (see \cite{BB} and \cite{WBB}),
where a typical quasi-linear equation takes the form
\begin{equation*}
    u_{t} = -(u^{n} u_{xxx} +u^{m} u_{x})_{x}\,.
\end{equation*}
Observe that here a linear perturbation of the case  $n=0$ is treated.

\setcounter{equation}{0}
\section{Fibering method for a stationary unstable C--H equation}
 \label{S3}

 In this section, we study the existence and
multiplicity of solutions of the following stationary
unstable {C--H}-type equation:
\begin{equation}
\label{fb1}
    -\Delta^2 u + \g u - \Delta (|u|^{p-1}u)=0 \quad
    \hbox{in}\quad \O \quad (p>1),
\end{equation}
with the Navier boundary conditions as in \ef{12}. We will focus
on achieving such results depending on the value of the parameter
$\g$ and considering the equation in a bounded domain $\O \subset
\ren$ with Navier-type boundary conditions \eqref{12}.

\ssk

\noi{\bf Remark: on setting in $\ren$.} It turned out that, for
$\g <0$, the problem can be posed in the whole of $\ren$ in a
class of functions properly decaying at infinity,
\begin{equation*}
    \tex{\lim_{|x| \rightarrow \infty} u(x)=0;}
\end{equation*}
see a brief discussion in Section \ref{S.RN} below.
 However, there are specific difficulties concerning a suitable
 functional setting of the \ef{fb1} in $\ren$, so this will be
 done in a separate paper \cite{CHPabloRN}, where  a very wide
 class of solutions (critical points of a functional) is detected.


\ssk

Thus, to carry out the study of \ef{fb1} in a bounded smooth $\O
\subset \ren $, we will use the {\em fibering method}, introduced
by S.I.~Pohozaev in the 1970s \cite{Poh0, PohFM}, as a convenient
generalization of previous versions by Clark and Rabinowitz
\cite{Clark, Rabin}  of variational approaches, and further
developed   by Dr\'abek and Pohozaev \cite{DraPoh} and others in
the 1980's. In particular, recently, it was used by Brown and
collaborators \cite{BrownWu1,BrownWu2} to ascertain the existence
and multiplicity of solutions for equations with a variational
form (in particular p-Laplacian) associated to such equation,
i.e., potential operator equations, alternatively to other methods
such as bifurcation theory, critical point theory and so on.


\subsection{Preliminary results for the variational analysis}


Firstly, observe that \eqref{fb1} is not variational in $L^2(\O)$,
though it is variational in $H^{-1}(\O)$. Thus, multiplying
\eqref{fb1} by $(-\D)^{-1}$,
 we obtain a nonlocal elliptic equation with the standard zero
 Dirichlet boundary condition\footnote{Here, as customary,
$(-\D)^{-1}u=v$, if
 $$
 -\D v=u \,\,\, \mbox{in} \,\,\, \O, \quad v=0 \,\,\, \mbox{on}
 \,\,\, \p \O.
 $$
Therefore,  \ef{fbnew}  implies that $\D u=0$ on $\p \O$, so that
both the Navier conditions in \ef{12} hold for $u$.}
 \be
 \label{fbnew}
 \tex{-\D u - \g (-\D)^{-1} u - |u|^{p-1} u=0 \,\,\, \mbox{in} \,\,\, \O,
 \quad u=0 \,\,\, \mbox{on} \,\,\, \p \O,}
 \ee
 which  admits a variational setting in $L^2(\O)$ and, hence, the fibering method can be applied.
 To this end, consider
 the following Euler functional associated to \eqref{fbnew}:
\begin{equation}
\label{funfb}
    \tex{ \mc{F}_\g(u):=\frac{1}{2} \int\limits_\O |\nabla u|^2 - \frac{\g}{2} \int\limits_\O |(-\D)^{-1/2} u|^2 -
     \frac{1}{p+1} \int\limits_\O
    |u|^{p+1},
    }
\end{equation}
 such that the solutions of \eqref{fbnew} can be obtained as
critical  points of the $C^1$ functional \eqref{funfb}. Note that
$(-\D)^{-1}$ is  a positive linear integral compact  operator from
$L^2(\O)$ to itself. Then, the operator $(-\D)^{-1/2}$ is defined
as the square root of the operator $(-\D)^{-1}$ and it will also be
referred to as a non-local compact linear operator.


Subsequently, for the functional \eqref{funfb}, the following
result is  well-known. Hereafter, we are assuming that $H_0^1(\O)=
W_0^{2,1}(\O)$.

\begin{lemma}
\label{Le fb21}
The functional \eqref{funfb} is Fr\'echet differentiable
and its Fr\'echet derivative is
\begin{equation*}
    \tex{D_{u}\mc{F}_\gamma(u) \varphi
    :=\int\limits_\O \nabla u \cdot \nabla \varphi - \g \int\limits_\O (-\D)^{-1/2}
    u\cdot
    (-\Delta)^{-1/2} \varphi  - \int\limits_\O |u|^{p} \varphi,}
     \quad \varphi \in H_0^1(\O).
\end{equation*}


\end{lemma}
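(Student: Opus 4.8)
The plan is to split $\mathcal{F}_\gamma$ into three functionals on $H_0^1(\O)$ and differentiate each separately, using additivity of the Fréchet derivative. Write
\[
\mathcal{F}_\gamma(u)=\tfrac12\,a(u,u)-\tfrac{\gamma}{2}\,b(u,u)-\Phi(u),
\]
with $a(u,v):=\int_\O\nabla u\cdot\nabla v$, $b(u,v):=\int_\O(-\D)^{-1/2}u\cdot(-\D)^{-1/2}v$, and $\Phi(u):=\frac1{p+1}\int_\O|u|^{p+1}$. The form $a$ is, up to equivalence of norms, the scalar product of $H_0^1(\O)$, hence a bounded symmetric bilinear form; therefore $u\mapsto\tfrac12 a(u,u)$ is of class $C^\infty$ with $D_u\big(\tfrac12 a(u,u)\big)\varphi=a(u,\varphi)=\int_\O\nabla u\cdot\nabla\varphi$, which is the first term of the claimed derivative.

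For the nonlocal term I would first recall that $(-\D)^{-1}$ is a positive, self-adjoint, compact operator on $L^2(\O)$, so its positive square root $(-\D)^{-1/2}$ is a bounded self-adjoint operator on $L^2(\O)$; composing with the continuous embedding $H_0^1(\O)\hookrightarrow L^2(\O)$ gives $|b(u,v)|\le C\|u\|_{L^2}\|v\|_{L^2}\le C'\|u\|_{H_0^1}\|v\|_{H_0^1}$, so $b$ is a bounded symmetric bilinear form on $H_0^1(\O)$. Hence $u\mapsto\tfrac{\gamma}{2} b(u,u)$ is $C^\infty$ with derivative $\gamma\,b(u,\varphi)=\gamma\int_\O(-\D)^{-1/2}u\cdot(-\D)^{-1/2}\varphi$ (equivalently $\gamma\int_\O(-\D)^{-1}u\cdot\varphi$, by self-adjointness), producing the middle term with the correct sign. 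The nonlocality of $(-\D)^{-1/2}$ is harmless here, since it enters only as a fixed bounded operator on $L^2(\O)$.

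The heart of the lemma is the superposition (Nemytskii) functional $\Phi$. I would invoke the Sobolev embedding $H_0^1(\O)\hookrightarrow L^{p+1}(\O)$, valid in the (sub)critical range $p+1\le\frac{2N}{N-2}$ for $N\ge3$ and for all $p>1$ when $N=1,2$, which already makes $\Phi$ finite on $H_0^1(\O)$. Since $g(s):=|s|^{p+1}$ has $g'(s)=(p+1)|s|^{p-1}s$ with $|g'(s)|\le(p+1)|s|^{p}$, and since $u\in L^{p+1}(\O)$ forces $|u|^{p-1}u\in L^{(p+1)/p}(\O)$ (the conjugate exponent), Hölder's inequality shows that $\varphi\mapsto\int_\O|u|^{p-1}u\,\varphi$ is a bounded linear functional on $H_0^1(\O)$. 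To identify it with $D_u\Phi(u)$, write
\[
\Phi(u+\varphi)-\Phi(u)-\int_\O|u|^{p-1}u\,\varphi=\int_\O\Big(\int_0^1\big(|u+t\varphi|^{p-1}(u+t\varphi)-|u|^{p-1}u\big)\,dt\Big)\varphi,
\]
bound the inner difference by the elementary inequality $\big|\,|a|^{p-1}a-|b|^{p-1}b\,\big|\le C\big(|a|^{p-1}+|b|^{p-1}\big)|a-b|$, and apply Hölder together with the embedding to estimate the right-hand side by $o(\|\varphi\|_{H_0^1})$ as $\varphi\to0$; continuity of $u\mapsto D_u\Phi(u)$ then follows from continuity of the Nemytskii map $u\mapsto|u|^{p-1}u$ from $L^{p+1}(\O)$ into $L^{(p+1)/p}(\O)$. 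Dividing by $p+1$, $\Phi$ contributes $-\int_\O|u|^{p-1}u\,\varphi$, which is the last term (written $-\int_\O|u|^p\varphi$ in the statement). Collecting the three derivatives yields the asserted formula for $D_u\mathcal{F}_\gamma(u)\varphi$. The only genuine obstacle is this last Nemytskii estimate, and it is routine provided the growth exponent $p$ stays at or below the Sobolev exponent $p_{\rm S}=\frac{N+2}{N-2}$; for $N\le2$ there is no restriction.
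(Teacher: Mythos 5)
Your proof is correct and follows the same overall decomposition as the paper: split $\mc{F}_\g$ into its quadratic part and the Nemytskii functional $\Phi(u)=\frac1{p+1}\int_\O|u|^{p+1}$, and differentiate each piece. For the quadratic terms the two arguments are identical in substance (you phrase it via bounded symmetric bilinear forms; the paper expands $\mc{F}_{1,\g}(u+\varphi)$ by hand and observes the remainder is $o(\|\varphi\|_{H_0^1})$). Where you genuinely diverge is the nonlinear term: the paper differentiates it by a pointwise Taylor expansion $\frac1{p+1}|u+\varphi|^{p+1}=\frac1{p+1}|u|^{p+1}+|u|^{p}\varphi+o(|\varphi|)$ and then simply asserts that the integrated remainder is $o(\|\varphi\|_{H_0^1})$, whereas you justify this via the integral representation of the difference, the elementary inequality $\bigl||a|^{p-1}a-|b|^{p-1}b\bigr|\le C(|a|^{p-1}+|b|^{p-1})|a-b|$, H\"older, and the Sobolev embedding $H_0^1(\O)\hookrightarrow L^{p+1}(\O)$. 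This buys you two things the paper's version leaves implicit: (i) the remainder estimate actually holds in the $H_0^1$-norm rather than merely pointwise, and (ii) the growth restriction $p+1\le\frac{2N}{N-2}$ for $N\ge3$ is made explicit, which the lemma as stated in the paper suppresses (it is imposed only later, e.g.\ in Lemma \ref{lem3}). You also correctly note that the derivative of $\Phi$ is $\int_\O|u|^{p-1}u\,\varphi$, so the $\int_\O|u|^p\varphi$ in the statement should be read as $\int_\O|u|^{p-1}u\,\varphi$ for sign-changing $u$ --- a notational slip of the paper, consistent with the form of \eqref{fbnew}, not a gap in your argument.
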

\begin{proof}
Let $\mc{F}_\g(u+\varphi)$ be
\begin{equation*}
    \tex{\mc{F}_\g(u+\varphi)
    :=\frac{1}{2}\big[\int\limits_\O |\nabla (u+\varphi)|^2
    - \g \int\limits_\O
    \left|(-\Delta)^{-1/2} (u+\varphi)\right|^2 \big] -
    \frac{1}{p+1}\int\limits_\O |u+\varphi|^{p+1}.}
\end{equation*}


We split the proof between two parts. The first one obtaining the Fr\'echet derivative for the first two terms of the functional, denoted by the
functional $\mc{F}_{1,\g}$, and the second for the non-linear part, denoted by $\mc{F}_{2,\g}$. Subsequently, operating the expressions for the first
two terms of the functional and rearranging terms yields
\begin{align*}
    \tex{\mc{F}_{1,\g}(u+\varphi)=\frac{1}{2}[\int\limits_\O |\nabla u|^2 } & \tex{- \g \int\limits_\O
    \left|(-\Delta)^{-1/2} u\right|^2+ \int\limits_\O |\nabla \varphi|^2
    - \g \int\limits_\O
    \left|(-\Delta)^{-1/2} \varphi\right|^2 ]}  \\ &  \tex{+
    \int\limits_\O \nabla u \cdot \nabla \varphi -\g \int\limits_\O  (-\Delta)^{-1/2} u \cdot
    (-\Delta)^{-1/2} \varphi.}
\end{align*}
Since, $\tex{\int_\O |\nabla \varphi|^2 - \g \int_\O
    \left|(-\Delta)^{-1/2} \varphi\right|^2}$ vanishes quite radically
\begin{equation*}
    \tex{\left|\int\limits_\O |\nabla \varphi|^2 - \g \int\limits_\O
    \left|(-\Delta)^{-1/2} \varphi\right|^2 \right| \leq K
    \left\|\varphi\right\|_{H_0^1(\O)}=
    o(\left\|\varphi\right\|_{H_0^1(\O)}),}
\end{equation*}
as $\left\|\varphi\right\|_{H_0^1(\O)}$ goes to zero and for a
positive constant $K$, we find that $$
    \tex{ \left|\mc{F}_{1,\g}(u+\varphi)-\mc{F}_{1,\g}(u)
    - \int\limits_\O \nabla u \cdot \nabla \varphi  + \g \int\limits_\O  (-\Delta)^{-1/2} u \cdot
    (-\Delta)^{-1/2} \varphi  \right|
    =   o(\left\|\varphi \right\|_{H_0^1(\O)}),}
$$
as $\varphi \rightarrow 0$ in
$H_0^1(\O)$.

Furthermore, for the term related to the nonlinear part (the third
term in the functional \eqref{funfb}),  we use the Taylor's
expansion in $\varphi=0$, such that
\begin{equation*}
 \tex{
    \frac{1}{p+1}|u+\varphi|^{p+1}=\frac{1}{p+1}|u|^{p+1}+ |u|^{p} \varphi+ o(\left|\varphi\right|),
 }
\end{equation*}


\noi as $\varphi\rightarrow 0$. Therefore, since $u\in H_0^1(\O)$,
we can conclude that
\begin{equation*}
 \tex{
    \Big|\frac{1}{p+1}\int\limits_\O |u+\varphi|^{p+1} -\frac{1}{p+1} \int\limits_\O |u|^{p+1}
    -\int\limits_\O |u|^{p}\varphi\Big|
    = o\big(\big\|\varphi\big\|_{H_0^1(\O)}\big),
     }
\end{equation*}
when $\varphi$ goes to zero in $H_0^1(\O)$, which completes the
proof.
\end{proof}

Consequently, we have the directional derivative (Gateaux's
derivative) of the functional \eqref{funfb} as follows:
\begin{equation}
\label{derv}
    \tex{\frac{\mathrm d}{{\mathrm d}t}\, \mc{F}_\g(u+t\varphi)_{|t=0}= \left\langle \varphi, D_u \mc{F}_\g(u)\right\rangle
    =   D_{u}\mc{F}_\g(u) \varphi.}
\end{equation}
Furthermore, due to \eqref{derv}, the critical points of
\eqref{funfb} are weak solutions in $H_0^1(\O)$ for the equation
\eqref{fbnew}. In other words, the Fr\'echet derivative obtained
in Lemma\;\ref{Le fb21} of the functional \eqref{funfb} is going
to be zero when $u$ is a weak solution of \eqref{fbnew}, i.e.,
\begin{equation}
\label{deriva}
    \tex{D_{u}\mc{F}_\g(u) \varphi=0.}
\end{equation}

 We denote  critical points of the functional $\mc{F}_\g(u)$
\eqref{funfb} as follows:
\begin{equation*}
    \mc{C}_\g:=\{u \in W_0^{2,1}\,:\,
    D_{u}\mc{F}_\g(u) \varphi=0\}.
\end{equation*}




 Then, as usual, the critical
points of the functional $\mc{F}_\g(u)$ \eqref{funfb} correspond
to weak solutions of the equation \eqref{fbnew} and, hence, to the stationary Cahn--Hilliard equation
\eqref{fb1}, i.e.,
\begin{equation}
 \label{defweak11}
    \tex{\int\limits_\O \nabla u \cdot \nabla \varphi - \g \int\limits_\O (-\D)^{-1/2} u \cdot
    (-\Delta)^{-1/2} \varphi  - \int\limits_\O |u|^{p}  \varphi=0,  }
\end{equation}
 for any $\varphi \in W_0^{2,1}(\O)$ (or $C_0^\infty(\O)$). Thus,
$u\in \mc{C}_\g$ if and only if
\begin{equation}
\label{critp}
    \tex{\int\limits_\O |\nabla u|^2 - \g \int\limits_\O \left|(-\D)^{-1/2} u\right|^2 - \int\limits_\O
    |u|^{p+1}=0.  }
\end{equation}



By classic elliptic regularity for higher-order equations
(Schauder's theory; see \cite{Berger} for further details), we
will then always obtain classical solutions for such equations.

For the sake of completion, we study some of the properties of
the functional $\mc{F}_\g(u)$ in \eqref{funfb}.
To do so, the following definitions are convenient
to introduce:


\begin{definition}
\label{Defb1} Given a  map $\mc{F}:V\longrightarrow \ren$, where
$V$ is a Banach space, it is weakly (sequentially) lower
semicontinuous $(${\sc wls}$)$, if, for any weakly convergent
sequence $\{u_n\}$ in $V$, $u_n \rightharpoonup u$, as
$n\rightarrow \infty$, there holds
\begin{equation*}
    \mc{F}(u)\leq \liminf_{n\rightarrow \infty}\mc{F}(u_n).
\end{equation*}
\end{definition}

\begin{definition}
\label{Defb2} Given a map $\mc{F}:V\longrightarrow \ren$, where
$V$ is a Banach space, it is weakly semicontinuous $(${\sc ws}$)$,
if, for any weakly convergent sequence $\{u_n\}$ in $V$, $u_n
\rightharpoonup u$, as $n\rightarrow \infty$, there holds
\begin{equation*}
    \mc{F}(u)= \lim_{n\rightarrow \infty}\mc{F}(u_n).
\end{equation*}
\end{definition}

Next,  the following lemma is easily proved:
\begin{lemma}
\label{lem1} If X is a Hilbert space, then its norm is {\sc wls}.
\end{lemma}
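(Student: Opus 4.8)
The statement to prove is: \emph{If $X$ is a Hilbert space, then its norm is {\sc wls}}, i.e., weakly (sequentially) lower semicontinuous.

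The plan is to argue directly from the definition of weak lower semicontinuity given just above, using only elementary Hilbert space properties: the Cauchy--Schwarz inequality and the fact that every weakly convergent sequence is norm-bounded (a consequence of the uniform boundedness principle). Let $\{u_n\}$ be a sequence in $X$ with $u_n \rightharpoonup u$ as $n \to \infty$. If $u = 0$, then the inequality $\|u\| = 0 \le \liminf_{n\to\infty}\|u_n\|$ is trivial since norms are nonnegative, so assume $u \neq 0$. The key step is to test weak convergence against the fixed element $u$ itself: by definition of weak convergence, $\langle u_n, u\rangle \to \langle u, u\rangle = \|u\|^2$ as $n \to \infty$. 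On the other hand, the Cauchy--Schwarz inequality gives $|\langle u_n, u\rangle| \le \|u_n\|\,\|u\|$ for every $n$. Combining these, one passes to the $\liminf$ on the right:
\begin{equation*}
    \|u\|^2 = \lim_{n\to\infty}\langle u_n, u\rangle = \lim_{n\to\infty}|\langle u_n,u\rangle| \le \liminf_{n\to\infty}\big(\|u_n\|\,\|u\|\big) = \|u\|\,\liminf_{n\to\infty}\|u_n\|.
\end{equation*}
Dividing by $\|u\| > 0$ yields $\|u\| \le \liminf_{n\to\infty}\|u_n\|$, which is exactly the {\sc wls} inequality required by Definition~\ref{Defb1} with $\mc{F} = \|\cdot\|$.

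One small point worth recording is that the limits above are genuine limits (not merely $\liminf$s) for the scalar sequence $\langle u_n, u\rangle$, because weak convergence forces convergence of that inner product; and $|\langle u_n,u\rangle| \to |\langle u,u\rangle|$ follows by continuity of the modulus. The boundedness of $\{\|u_n\|\}$ is not strictly needed for this particular argument, since the Cauchy--Schwarz bound is applied term by term and the $\liminf$ is taken afterwards; I would mention it only in passing, or omit it entirely.

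There is essentially no obstacle here: the lemma is a textbook fact, and the only thing to be careful about is the degenerate case $u = 0$, which is handled separately as above, and the direction of the inequality when passing from $\lim$ of a product to $\liminf$ (which is fine because $\|u\|$ is a fixed nonnegative constant, so $\liminf(\|u_n\|\,\|u\|) = \|u\|\liminf\|u_n\|$ exactly). This lemma will be used subsequently to deduce weak lower semicontinuity of the quadratic Dirichlet-type terms in the functional $\mc{F}_\g$ in \eqref{funfb}, so stating it cleanly in this generality is the right level.
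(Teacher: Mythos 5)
Your proof is correct. It rests on the same underlying idea as the paper's — testing the weak convergence against the limit element $u$ itself, so that $\langle u_n,u\rangle\to\|u\|^2$ — but the algebraic mechanism differs slightly: the paper expands $0\le\|u_n-u\|^2=\|u_n\|^2-2\langle u_n,u\rangle+\|u\|^2$ to get the lower bound $2\langle u_n,u\rangle-\|u\|^2\le\|u_n\|^2$, passes to a subsequence realizing $\liminf\|u_n\|^2$, and then takes a square root; you instead apply Cauchy--Schwarz, $|\langle u_n,u\rangle|\le\|u_n\|\,\|u\|$, and divide by $\|u\|$ after taking $\liminf$. Your route is marginally cleaner in that it works with the norm directly (no squaring/square-rooting and no explicit subsequence extraction), at the small cost of having to treat $u=0$ separately, which you do. Both arguments are complete and standard; there is no gap in yours.
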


\begin{proof}
Since the square root function is a continuous function, we find
that
\begin{equation*}
    \tex{\left\|u\right\|_{X}^2 \leq \liminf
    \left\|u_n\right\|_{X}^2 \quad \LongA \quad
    \left\|u\right\|_{X} \leq \liminf
    \left\|u_n\right\|_{X},}
\end{equation*}
for any sequence $\{u_n\}$ in the space $X$ convergent to $u\in
X$. Thus, firstly, we assume that $u_n \rightharpoonup u$ in X and
by definition we also have that
\begin{equation*}
    \tex{0\leq\left\|u_n-u\right\|_{X}^2
    =\left\|u_n\right\|_{X}^2-
    2\left\langle u_n,u\right\rangle_{X}
    +\left\|u\right\|_{X}^2,}
\end{equation*}
where, $\left\langle \cdot,\cdot\right\rangle_{X}$ represents the
inner product of the Hilbert space $X$. Hence,
\begin{equation}
\label{fb2}
    \tex{2\left\langle u_n,u\right\rangle_{X}
    -\left\|u\right\|_{X}^2
    \leq \left\|u_n\right\|_{X}^2.}
\end{equation}
Moreover, owing to the convergence of the taken sequence, we can
choose a subsequence of $\left\|u_n\right\|_{X}^2$, convergent to
$\liminf \left\|u_n\right\|_{X}^2$. Therefore, passing to the
limit \eqref{fb2}, we find that
\begin{equation*}
    \tex{\left\|u\right\|_{X}^2 \leq \liminf
    \left\|u_n\right\|_{X}^2,}
\end{equation*}
which concludes the proof.
\end{proof}

Then, assuming that $u\in W_0^{2,1}(\O)$ is equipped with the
norm
\begin{equation*}
    \tex{\left\|u\right\|_{W_0^{2,1}(\O)} := \big(\int\limits_\O |\nabla u|^2\big)^{1/2}}
\end{equation*}
(this is possible thanks to Poincar\'e's inequality) and applying
Lemma\,\ref{lem1} it is clear that the functional
\begin{equation*}
    \tex{u \rightarrow \int\limits_\O |\nabla u|^2,}
\end{equation*}
is weakly lower semicontinuous.





Furthermore, it is also easy to prove that the second and third term of the
functional \eqref{funfb} are weakly semicontinuous.

\begin{lemma}
\label{lem2}
Suppose $u\in H_0^1(\O)$. Then, $\int\limits_\O
    \left|(-\Delta)^{-1/2} u\right|^{2}$ is {\sc ws}.
\end{lemma}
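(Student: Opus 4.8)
The plan is to exploit the compactness of the linear operator $(-\D)^{-1/2}$ on $L^2(\O)$, which was already noted in the discussion preceding Lemma~\ref{Le fb21}. Recall that $(-\D)^{-1}$ is a positive, self-adjoint, compact operator on $L^2(\O)$; diagonalizing it in an orthonormal eigenbasis $\{\var_k\}$ with eigenvalues $\mu_k=\l_k^{-1}\to 0$ (the $\l_k$ being the Dirichlet eigenvalues of $-\D$), the square root $(-\D)^{-1/2}$ acts by the eigenvalues $\mu_k^{1/2}=\l_k^{-1/2}$, which again form a null sequence; hence $(-\D)^{-1/2}$ is itself compact on $L^2(\O)$. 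As a compact linear operator it is completely continuous, i.e.\ it sends weakly convergent sequences to strongly convergent ones.

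First I would fix an arbitrary sequence $\{u_n\}\subset H_0^1(\O)$ with $u_n\rightharpoonup u$ in $H_0^1(\O)$. Since the embedding $H_0^1(\O)\hookrightarrow L^2(\O)$ is continuous, weak convergence is preserved, so $u_n\rightharpoonup u$ in $L^2(\O)$ as well. (Alternatively one may invoke the Rellich--Kondrachov theorem to obtain $u_n\to u$ strongly in $L^2(\O)$; either route suffices for what follows.)

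Next, applying complete continuity of $(-\D)^{-1/2}$ to the weakly convergent sequence $u_n\rightharpoonup u$ in $L^2(\O)$, we get $(-\D)^{-1/2}u_n\to(-\D)^{-1/2}u$ strongly in $L^2(\O)$. Since $v\mapsto\|v\|_{L^2(\O)}^2$ is continuous on $L^2(\O)$, it follows that
\[
\int_\O \big|(-\D)^{-1/2}u_n\big|^2=\big\|(-\D)^{-1/2}u_n\big\|_{L^2(\O)}^2\longrightarrow\big\|(-\D)^{-1/2}u\big\|_{L^2(\O)}^2=\int_\O \big|(-\D)^{-1/2}u\big|^2,
\]
which is precisely the assertion that this functional is {\sc ws} in the sense of Definition~\ref{Defb2}.

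I do not anticipate a genuine obstacle here; the only slightly delicate point is the compactness of $(-\D)^{-1/2}$ itself, but this is immediate from the spectral representation above (the eigenvalues $\l_k^{-1/2}$ tend to $0$), after which the proof reduces to the textbook fact that compact operators are completely continuous, combined with continuity of the $L^2$-norm.
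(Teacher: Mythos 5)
Your proof is correct, and it rests on the same underlying mechanism as the paper's: the compactness of the nonlocal operator $(-\D)^{-1/2}$ turns weak convergence of $\{u_n\}$ into strong $L^2$-convergence of $\{(-\D)^{-1/2}u_n\}$, after which continuity of the squared norm finishes the job. The difference is in execution. The paper routes the argument through the intermediate space $H_0^{1/2}(\O)$, invokes ``equicontinuity'' and the Ascoli--Arzel\'a theorem to extract a convergent subsequence $\{f_{m_i}\}$ in $L^2(\O)$, and then identifies the limit using compactness of $(-\D)^{-1/2}$; this is looser than your version (equicontinuity is not the natural notion for $H^{1/2}$ functions, and the paper formally only obtains convergence along a subsequence, whereas Definition~\ref{Defb2} asks for the full sequence). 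You instead justify compactness of $(-\D)^{-1/2}$ directly from the spectral decomposition of the positive self-adjoint compact operator $(-\D)^{-1}$ (the eigenvalues $\l_k^{-1/2}\to 0$), pass weak convergence from $H_0^1(\O)$ to $L^2(\O)$ via the continuous embedding, and then use the standard fact that a compact operator on a Hilbert space is completely continuous. This yields convergence of the whole sequence with no subsequence extraction, and is the cleaner and more rigorous of the two arguments; your alternative remark that Rellich--Kondrachov already gives $u_n\to u$ strongly in $L^2(\O)$, after which mere boundedness of $(-\D)^{-1/2}$ suffices, is also valid and even more economical.
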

\begin{proof}


As performed in the proof of Lemma\;\ref{lem1}, we take a
convergent sequence $\{u_n\}$ in $H_0^1(\O)$ so that $u_n
\rightharpoonup u$ for some $u\in H_0^1(\O)$. Then, $(-\D)^{-1/2}
u_n :=f_n$, with $\{f_n\} \subset H_0^{1/2}(\O)$ being
equicontinuous in $H_0^{1/2}(\O)$. Then, by the compact imbedding
of $H_0^{1/2}(\O)$ into $L^2(\O)$ and by the Ascoli--Arzel\'a
theorem we can extract a convergent subsequence $\{f_{m_i}\}$ in
$L^2(\O)$  so that $f_{m_i}\rightarrow f$ as $m_i
\rightarrow\infty$. Moreover, since the linear operator
$(-\D)^{-1/2}$ is compact,  we find that
\begin{align*}
    \tex{f_{m_i}\rightarrow f} & \tex{
    \LongA (-\Delta)^{-1/2} u_{m_i} \rightarrow
    (-\Delta)^{-1/2} u} \\  & \tex{\LongA
    \int\limits_\O \left|(-\Delta)^{-1/2} u_{m_i}\right|^{2}
    \rightarrow \int\limits_\O \left|(-\Delta)^{-1/2} u\right|^{2}.}
\end{align*}
This completes the proof.
\end{proof}


\begin{lemma}
\label{lem3} Suppose $u\in H_0^{1}(\O)=W_0^{2,1}(\O)$. Then,
$\int\limits_\O
    |u|^{p+1}$ is {\sc ws}, if $p<\frac{N+2}{N-2}$.
\end{lemma}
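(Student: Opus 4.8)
The plan is to reduce the claim to the Rellich--Kondrachov compact embedding theorem together with the elementary fact that the $L^{p+1}$-norm is continuous under strong convergence. So I would start from an arbitrary sequence $\{u_n\}\subset H_0^1(\O)$ with $u_n\rightharpoonup u$ in $H_0^1(\O)$; by the uniform boundedness principle such a sequence is automatically bounded in $H_0^1(\O)$. Since $\O$ is bounded with smooth boundary, the embedding $H_0^1(\O)\hookrightarrow L^{p+1}(\O)$ is compact exactly in the subcritical range $p+1<2^*=\frac{2N}{N-2}$, i.e. $p<\frac{N+2}{N-2}$ (and for every finite $p$ when $N=1,2$); this is precisely where the hypothesis on $p$ enters. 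Consequently some subsequence $\{u_{n_k}\}$ converges strongly in $L^{p+1}(\O)$, say to $v$.

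The next step is to identify $v=u$ and then upgrade to convergence of the whole sequence. For the identification I would note that strong $L^{p+1}$-convergence implies $u_{n_k}\rightharpoonup v$ in $L^{p+1}(\O)$, while $u_{n_k}\rightharpoonup u$ in $L^{p+1}(\O)$ already follows from the weak $H_0^1$-convergence, so uniqueness of weak limits gives $v=u$. Then the standard ``subsequence of every subsequence'' argument applies: every subsequence of $\{u_n\}$ admits a further subsequence converging in $L^{p+1}(\O)$, and by the same reasoning its limit must be $u$; hence the full sequence converges, $u_n\to u$ in $L^{p+1}(\O)$. Finally, continuity of the norm under strong convergence yields $\int_\O|u_n|^{p+1}=\|u_n\|_{L^{p+1}(\O)}^{p+1}\to\|u\|_{L^{p+1}(\O)}^{p+1}=\int_\O|u|^{p+1}$, which is exactly the {\sc ws} property of Definition \ref{Defb2}.

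I do not expect a genuine obstacle here. The two points that need a little attention are: (a) the convergence of the integrals has to be proved along the entire sequence, not just along a subsequence extracted from compactness, which is what the subsequence argument takes care of; and (b) the critical exponent $p+1=2^*$ must be excluded, since the Sobolev embedding into $L^{2^*}(\O)$ is bounded but not compact and the conclusion genuinely fails there, which is the reason the strict inequality $p<\frac{N+2}{N-2}$ is assumed. An alternative to the compact-embedding route would be to combine boundedness in $L^{2^*}(\O)$ (which gives equi-integrability of $|u_n|^{p+1}$ because $p+1<2^*$) with a.e. convergence along a subsequence and Vitali's convergence theorem, but the Rellich--Kondrachov argument is shorter and cleaner.
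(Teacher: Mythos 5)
Your proof is correct and follows essentially the same route as the paper's: the Rellich--Kondrachov compact embedding $H_0^1(\O)\hookrightarrow L^{p+1}(\O)$ in the subcritical range $p+1<\frac{2N}{N-2}$, followed by continuity of the $L^{p+1}$-norm (the paper phrases this as continuity of the Nemytskii operator $|u|^{p+1}$). If anything, yours is the more complete version: the paper only extracts a convergent subsequence (and for $N=1,2$ invokes Ascoli--Arzel\`a rather loosely from an $L^\infty$ bound), whereas your identification of the limit via uniqueness of weak limits and the subsequence-of-every-subsequence step are exactly what is needed to obtain convergence of the \emph{whole} sequence, as Definition \ref{Defb2} requires.
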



\begin{proof}

As demonstrated in the proof of Lemma\;\ref{lem1}, we take a
convergent sequence $\{u_n\}$ in $W_0^{2,1}(\O)$ so that $u_n
\rightharpoonup u$ for some $u\in W_0^{2,1}(\O)$. Then, standard
functional analysis tells us that the functions $u_n$ are bounded
in $W_0^{2,1}(\O)$ and in $L^\infty(\O)$ by Sobolev's inequality
when $N=1,2$. Therefore, $\{u_n\}$ satisfies the Ascoli--Arzel\'a
theorem, so we can extract a convergent subsequence so that
$u_{n_i}\rightarrow u$ as $n_i \rightarrow\infty$ in
$L^\infty(\O)$. Thus,
\begin{equation*}
    \tex{u_{n_i}\rightarrow u
    \LongA \int\limits_\O |u_{n_i}|^{p+1} \rightarrow
    \int\limits_\O |u|^{p+1} .}
\end{equation*}


Furthermore, when $N>2$, by Sobolev's inequality, we have the imbedding of
$W_0^{2,1}(\O)$ into $L^q(\O)$, with $q=\frac{2N}{N-2}$.
 Then, if $p<\frac{N+2}{N-2}(=p_{S})$ the imbedding of
$W_0^{2,1}(\O)$ into $L^{p+1}(\O)$ is compact and by the continuity of the
Nemytskii operator $f(x,u):=|u|^{p+1}$ we can extract again a
convergent subsequence that proves the weakly semicontinuity for
this particular case.


\end{proof}

Indeed, we observe that by Fatou's Lemma and the continuity of the Nemytskii
operator $f(x,u):=|u|^{p+1}$ it is possible to find a convergent
subsequence $\{u_{n_i}\}$ such that, for any $p>1$
$$\tex{f(x,u)=|u|^{p+1}\leq \liminf_{n_i \rightarrow \infty}
|u_{n_i}|^{p+1},} \quad \mbox{and}$$  $$ \tex{\int\limits_\O
|u|^{p+1} \leq \liminf_{n\rightarrow \infty}  \int\limits_\O
|u_{n_i}|^{p+1}.
 }
 $$









\subsection{Direct application of the fibering method}


\noindent Subsequently, in order to apply the fibering method, we
split the function $u\in W_0^{2,2}(\O)$ as follows (without loss
of generality, we can suppose that $u\in W_0^{2,1}(\O)$):
\begin{equation}
\label{split}
    \tex{ u(x)=r v(x),}
\end{equation}
 where $r\in \re$, such that $r\geq 0$, and $v\in W_0^{2,1}(\O)$,
 to obtain the so-called {\em fibering maps}
\begin{align*}
    \tex{\phi_v\,:\,} & \tex{ \re \rightarrow \re,}\\  &
    \tex{r \rightarrow \mc{F}_\g(rv).}
\end{align*}
Substituting  $u$ from \ef{split} into  the functional
\eqref{funfb}, we have that
\begin{equation}
\label{funsplit}
    \tex{ \phi_v(r)= \mc{F}_\g(rv):=\frac{r^2}{2} \int\limits_\O
    |\nabla v|^2 - \frac{r^2\g}{2} \int\limits_\O |(-\D)^{-1/2} v|^2 -
    \frac{r^{p+1}}{p+1} \int\limits_\O
    |v|^{p+1}.}
\end{equation}
Thus,  \eqref{funsplit} defines the current fibering maps.


 Note that, if $u\in W_0^{2,1}(\O)$ is a critical point of
$\mc{F}_\g(u)$, then
\begin{equation*}
 \tex{
 D_u \mc{F}_\g(rv)v= \frac{\partial \mc{F}_\g(rv)}{\partial r}=0.
  }
\end{equation*}
In other words, $D_u \mc{F}_\g(rv)v=\left\langle D_u
\mc{F}_\g(rv),v\right\rangle_{W_0^{2,1}(\O)}$. Here, we denote by
$\left\langle\cdot,\cdot\right\rangle_{W_0^{2,1}(\O)}$ the inner
product in the space $W_0^{2,1}(\O)$. Thus, the calculation of
that derivative yields
\begin{equation*}
    \tex{ \phi_v'(r)=r\int\limits_\O |\nabla v|^2 -
    r\g \int\limits_\O |(-\D)^{-1/2} v|^2 - r^{p}\int\limits_\O
    |v|^{p+1} .}
\end{equation*}


\noi Moreover, since we are looking for non-trivial solutions
(critical points), i.e., $u\neq 0$,  we have to assume that $r\neq
0$. Hence,
\begin{equation}
\label{varivu}
    \tex{ \int\limits_\O |\nabla v|^2 - \g \int\limits_\O |(-\D)^{-1/2} v|^2 -
     r^{p-1}\int\limits_\O
    |v|^{p+1} =0,}
\end{equation}
and assuming that $\int\limits_\O |v|^{p+1} \neq 0$, we finally arrive at
\begin{equation}
\label{rex}
    \tex{r^{p-1} = \frac{\int\limits_\O |\nabla v|^2 -
     \g \int\limits_\O |(-\D)^{-1/2} v|^2 }{\int\limits_\O |v|^{p+1} }>0.}
\end{equation}



Now, calculating $r$ from \eqref{rex} (values of the scalar
functional $r=r(v)$, where those critical points are reached) and
substituting it into \eqref{funsplit} gives the following
functional:
\begin{equation}
\label{spfunc}
    \tex{ \mc{G}_\g(v)=\mc{F}_\g(r(v)v):=\big(\frac{1}{2}- \frac{1}{p+1}\big)
    \frac{\big( \int\limits_\O |\nabla v|^2 - \g \int\limits_\O |(-\D)^{-1/2} v|^2\big)^{\frac{p+1}{p-1}}}
    {\big(\int\limits_\O |v|^{p+1}  \big)^{\frac{2}{p-1}}}\,}.
\end{equation}



\noi According to Dr\'abek--Pohozaev \cite{DraPoh}, $r=r(v)$ is
well-defined and consequently the fibering map \eqref{funsplit}
possesses a unique point of monotonicity change in the case
\be
\label{poss} \tex{
\int\limits_\O |\nabla v|^2 - \g
\int\limits_\O |(-\D)^{-1/2} v|^2>0 \quad \hbox{and}\quad
\int\limits_\O |v|^{p+1}>0.} \ee
This is explained in detail  later on, when the analysis of the
fibering maps is carried out.


Furthermore, thanks to \cite[Lemma\;3.2]{DraPoh}, we can assume
that the Gateaux derivative of the functional $\mc{G}_\g$ at the
point $v\in W_0^{2,1}(\O)$ in the direction of $v$ is zero, i.e.,
 $$
 \left\langle D_v \mc{G}_\g(v),v\right\rangle_{W_0^{2,1}(\O)}=0.
  $$







\noi Therefore, assuming that $v_c$ is a critical point of
$\mc{G}_\g$, by the transformation carried out above, we have that
a critical point $u_c \in W_0^{2,1}(\O)$, $u_c \neq 0$, of
$\mc{F}_\g$ is generated by $v_c$ through the expression
 $$
 u_c=r_c
v_c,
$$
 with $r_c$ defined by \eqref{rex}.



\subsection{Multiplicity results}


\noindent In the following, we shall provide a description of the
fibering maps associated with \eqref{fb1}. It will become clear
that the essential nature of those fibering maps is determined by
the sign of the terms $\g \int_\O |(-\D)^{-1/2} v|^2$ and $\int_\O
|v|^{p+1}$. Since  $\int_\O |v|^{p+1}$ is always non-negative, the
different possibilities will depend on the value of the parameter
$\g$. Moreover, the different zeros of those fibering maps will
provide us with the critical points of the functional $\mc{G}_\g$
in \eqref{spfunc}, and, hence, by construction, of the functional
$\mc{F}_\g$ given by \eqref{funfb}. This is also supported by the
category analysis of the functional
 $\mc{F}_\g$,
\eqref{funfb}.

Now, following  \cite{BrownWu1,BrownWu2}, we define the function
$\omega_v\,:\,\re_+ \rightarrow \re$ by
\begin{equation}
\label{mult01}
    \tex{ \o_v(r)= \int\limits_\O |\nabla v|^2 - r^{p-1}\int\limits_\O
    |v|^{p+1}.}
\end{equation}
By the definition of the fibering maps and their relations  with
the critical points of the functional \eqref{funfb},
for $r>0$,
$$rv \in \mc{C}_\g \quad \hbox{if and only if}\quad r\; \hbox{is a solution of \eqref{mult01}},$$
\begin{equation}
\label{imp}
    \tex{\hbox{and}\quad \o_v(r)= \g \int\limits_\O |(-\D)^{-1/2} v|^2.}
\end{equation}
Moreover,
\begin{equation*}
    \tex{ \o_v'(r)= -(p-1)r^{p-2}\int\limits_\O
    |v|^{p+1},}
\end{equation*}


\noi  and, hence, $\o_v(r)$ is strictly decreasing, for any $r\geq
0$, since $\int\limits_\O |v|^{p+1}>0$. Also, we have that
\begin{equation}
\label{convex}
    \tex{ \phi_v''(r)=\int\limits_\O |\nabla v|^2 - \g \int\limits_\O |(-\D)^{-1/2} v|^2 - pr^{p-1}\int\limits_\O
    |v|^{p+1},}
\end{equation}
which can
provide us with the convexity of the fibering maps depending on
the increasing or decreasing  function \eqref{imp}. Indeed, if $rv
\in \mc{C}_\g$, i.e., $u$ is a critical point of the
functional \eqref{funfb} that satisfies \eqref{critp}, by
\eqref{varivu}, we have that
\begin{align*}
    \tex{ \phi_v''(r)} & \tex{ = \int\limits_\O |\nabla v|^2 - \g \int\limits_\O |(-\D)^{-1/2} v|^2 -r^{p-1}\int\limits_\O
    |v|^{p+1}- (p-1)r^{p-1}\int\limits_\O
    |v|^{p+1}} \\ & \tex{ = -(p-1)r^{p-1}\int\limits_\O
    |v|^{p+1} ,}
\end{align*}
such that
\begin{equation*}
    \tex{  r^{-1}\o_v'(r)=   \phi_v''(r),}
\end{equation*}
and then, we can say that the fibering map $\phi_v$ is always concave.


\noi Furthermore, we define the function
\begin{equation*}
    \tex{\mc{H}_v(r):=\frac{r^2}{2} \int\limits_\O |\nabla v|^2 - \frac{r^{p+1}}{p+1} \int\limits_\O
    |v|^{p+1}.}
\end{equation*}
Observe that, if the parameter $\g$ is less than a certain value
(to be specified in detail below), we find that the fibering map
is positive, $\phi_v(r)>0$, when $\int_\O |v|^{p+1}\geq 0$ (note
that
the opposite inequality $\int_\O |v|^{p+1}< 0$ is not possible) up to a critical value of r, i.e., for sufficiently small r's.
Then, the functional $\mc{H}_v(r)$ has a unique critical point at
the value $r=r_{\max}$ such that
\begin{equation*}
    \tex{r_{\max}= \Big(\frac{\int_\O |\nabla v|^2}{\int_\O
    |v|^{p+1}}\Big)^{\frac{1}{p-1}},}
\end{equation*}
and $\mc{H}_v(r)$ takes that maximum value at
\begin{equation*}
    \tex{\mc{H}_v(r_{\max})= \big(\frac{1}{2}-\frac{1}{p+1}\big)\Big(\frac{\big(\int_\O
    |\nabla v|^2\big)^{p+1}}{\big(\int_\O
    |v|^{p+1}\big)^2}\Big)^{\frac{1}{p-1}}.}
\end{equation*}
Note that $\mc{H}_v(r)$ is clearly increasing in the interval
$(0,r_{\max})$, for sufficiently small r's. Subsequently, by the Sobolev compact imbedding of
$W_0^{2,1}(\O)$ into $L^{p+1}(\O)$, with $1<p<\frac{N+2}{N-2}$ if
$N>2$ and any $p>1$ if $N=1,2$, we have that
\begin{equation*}
    \tex{\mc{H}_v(r_{\max})\geq \big(\frac{1}{2}-\frac{1}{p+1}\big)\big(\frac{1}{K_1}\big)
    ^{\frac{1}{p-1}},}
\end{equation*}
where $K_1>0$ is the constant of such imbedding. Besides, we obtain
the following inequality:
\begin{equation}
\label{ineq1}
    \tex{\frac{r_{\max}^2}{2} \int\limits_\O |(-\D)^{-1/2}v|^2 \leq \frac{K_2}{2}
    \Big(\frac{\int_\O |\nabla v|^2}{\int_\O
    |v|^{p+1}} \Big)^{\frac{2}{p-1}} \int\limits_\O |\nabla v|^2=
    \frac{K_2}{2} \Big(\frac{\big(\int_\O |\nabla v|^2\big)^{p+1}}{\big(\int_\O
    |v|^{p+1}\big)^2}\Big)^{\frac{1}{p-1}},}
\end{equation}
where $K_2$ is the corresponding constant for the imbedding of $H_0^{1}(\O)$ into $H_0^{1/2}(\O)$. Hence,
\begin{equation*}
    \tex{\frac{r_{\max}^2}{2} \int\limits_\O v^2 \leq K_2 \frac{p+1}{p+3} \mc{H}_v(r_{\max})= M \mc{H}_v(r_{\max}),}
\end{equation*}
for some constant $M=K_2 \frac{p+1}{p+3}>0$ independent of $v$. Thus,
\begin{equation*}
    \tex{\phi_v(r_{\max})\geq \mc{H}_v(r_{\max}) - \g M \mc{H}_v(r_{\max})= \mc{H}_v(r_{\max}) (1-\g M),}
\end{equation*}
and, hence, $\phi_v(r_{\max})>0$ for all non-zero $u$ if $\g < \frac{1}{M}$, providing a critical value of the
parameter in obtaining the different possibilities for the existence and multiplicity of solutions, i.e., critical
points, for the functional \eqref{funfb}. Note that the constant $M$ might be
equivalently obtained using the expression of the first eigenvalue
of the problem
\begin{equation}
\label{eigp}
    \tex{\Delta^2 u=\lambda u,}
\end{equation}
under homogeneous Dirichlet boundary conditions, i.e.,
\be
\label{dbc}
u=0,\quad \nabla u=0 \quad \hbox{on}\quad \p\Omega,
\ee
or Navier-type boundary conditions \ef{12} imposed for the problem \eqref{fb1}.
Thus, a real number $\l$ is called an eigenvalue and $u\in
W_0^{2,2}(\O)$, $u\neq0$, its corresponding eigenfunction if
\begin{equation*}
    \tex{\int\limits_\O \Delta u \, \Delta \varphi = \l \int\limits_\O u
    \varphi \quad \hbox{for any}\quad \varphi \in W_0^{2,2}(\O).}
\end{equation*}
Indeed, let us observe that the first eigenvalue $\l_1$ is positive by definition after integration by parts
\begin{equation*}
    \tex{\l_1:= \min_{u \in W_0^{2,2}(\O)} \frac{\int_\O |\Delta u|^2}{\int_\O u^2}>0.}
\end{equation*}
and, in addition,
for harmonic operators it is well known that the first eigenfunction is always positive too. In particular, for the Laplacian
  $(-\D) >0$, this is  Jentzsch's classic theorem (1912) on the positivity
 of the first eigenfunction for linear integral operators with positive
 kernels (a predecessor of the Krein--Rutman theorem). However,
for poly-harmonic operators $(-\D)^m$, with $m>1$, the first eigenfunction $\phi_1$, associated with the eigenvalue $\l_1$, is not always positive, or
even unique for general domains under Dirichlet boundary conditions of the form \eqref{dbc}.
Both uniqueness and positivity are lost in annuli with very small inner radius (see
\cite{HcS} for further details and discussions).
Therefore, as far as we know, apart from the particular case
when the domain $\O$ is a ball the positivity of the eigenfunction for poly-harmonic operators is still an open problem, even when $\O$ is a smooth
domain. Hence, in general the poly-harmonic operator $(-\D)^m$ in the unit ball, i.e.,  $\O=B_1$, is the only one with sign preserving solutions
 for the Dirichlet problem. In other words, the Green function
of the  poly-harmonic operator  $(-\D)^m$ in the unit ball,
with Dirichlet boundary conditions is known to be positive; see
first results by Boggio (1901-05) \cite{Boggio1, Boggio2} (see
also Elias \cite{Elias} for more recent related general results and
 Grunau--Sweers \cite{HcS}). Moreover, even for nice domains such as an ellipse the solutions might change sign.
 Only certain perturbations of the operator will preserve the sign of the solutions. Performing similar perturbations over the
 domain does not keep the positivity of the solutions either. Apart from partial results in the 2-dimensional case, under some restrictions (see \cite{HcS}), the problem
 in higher dimensions still remains open as well as the situation for general higher order operators.

 On the other hand, we note that the Dirichlet boundary conditions \eqref{dbc} do not allow us to write the eigenvalue problem \eqref{eigp} as a system
 of second order elliptic equations. For Navier boundary conditions as in
 \ef{12}, the first eigenfunction for the problem \eqref{eigp} is always positive by the
 Maximum Principle since in this particular case we can write the problem as a second order elliptic system.

 For convenience, though the known results are not fully classified, we summarize
 the above discussion as follows:



\begin{lemma}
\label{lemmeig}
Let $\l_1$ be the lowest eigenvalue of the problem \eqref{eigp},
 under homogeneous Dirichlet boundary conditions \eqref{dbc}, or Navier-type boundary conditions \eqref{12}
characterized as the minimum of the Rayleigh quotient,
\begin{equation}
\label{ray}
    \tex{\l_1:= \min_{u \in W_0^{2,2}(\O)} \frac{\int_\O |\Delta u|^2}{\int_\O u^2}>0.}
\end{equation}
Moreover, $\l_1$ is algebraically simple and it possesses an
associated eigenfunction denoted
by $\psi_1$. Furthermore, for Navier-type boundary conditions \eqref{12}, the eigenfunction $\psi_1$ is always strictly positive and unique
(up to a multiplicative constant).

In addition, $\l_1$ is the unique and isolated eigenvalue of
\eqref{eigp},
 and any other
eigenvalue $\l_k$, with $k \ge 2$ of \eqref{eigp} satisfies
$\l_k>\l_1$ (there is no eigenvalue less than $\l_1$ and in some
right hand side reduced neighbourhood of $\l_1$ sufficiently small).
Indeed, since the resolvent of the bi-harmonic operator $\Delta^2$ is a compact linear operator in $W_0^{2,2}(\Omega)$
then, the spectrum is discrete, i.e., it might contain either infinitely many isolated
eigenvalues or a finite number of isolated eigenvalues.
\end{lemma}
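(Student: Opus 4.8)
The plan is to establish the three assertions of Lemma~\ref{lemmeig} in the following order: (1) that $\l_1>0$ defined by the Rayleigh quotient \eqref{ray} is actually attained and is an eigenvalue of \eqref{eigp}; (2) that under Navier-type boundary conditions the problem decouples into a positive second-order system, so that the Maximum Principle forces $\psi_1$ to be strictly positive, and positivity in turn forces algebraic simplicity and uniqueness; and (3) that the spectrum is discrete with $\l_1$ isolated and smallest, via compactness of the resolvent.

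\emph{Step 1 (existence and attainment).} First I would show the infimum in \eqref{ray} is attained. Take a minimizing sequence $\{u_n\}\subset W_0^{2,2}(\O)$ normalized by $\int_\O u_n^2=1$; then $\int_\O|\D u_n|^2\to\l_1$, so $\{u_n\}$ is bounded in $W_0^{2,2}(\O)$ (using the equivalence of $\|\D\cdot\|_{L^2}$ with the full $W^{2,2}$-norm on $W_0^{2,2}(\O)$, which holds under either boundary condition). By reflexivity extract $u_n\rightharpoonup\psi_1$ in $W_0^{2,2}(\O)$; by the compact embedding $W_0^{2,2}(\O)\hookrightarrow L^2(\O)$ (Rellich) one gets $u_n\to\psi_1$ strongly in $L^2$, so $\int_\O\psi_1^2=1$ and in particular $\psi_1\neq0$; by weak lower semicontinuity of $u\mapsto\int_\O|\D u|^2$ (this is exactly Lemma~\ref{lem1}) we get $\int_\O|\D\psi_1|^2\le\l_1$, hence equality. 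A standard Lagrange-multiplier / first-variation argument then shows $\psi_1$ is a weak solution of $\int_\O\D\psi_1\,\D\var=\l_1\int_\O\psi_1\var$ for all $\var\in W_0^{2,2}(\O)$, i.e.\ $\psi_1$ is an eigenfunction with eigenvalue $\l_1$; by the Schauder regularity quoted earlier (after \eqref{critp}) $\psi_1$ is classical.

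\emph{Step 2 (positivity, simplicity, uniqueness under Navier conditions).} Under the Navier conditions \eqref{12} the bi-harmonic eigenvalue problem $\D^2\psi_1=\l_1\psi_1$, $\psi_1=\D\psi_1=0$ on $\p\O$, rewrites as the cooperative second-order system $-\D w=\l_1\psi_1$, $-\D\psi_1=w$ in $\O$ with $w=\psi_1=0$ on $\p\O$; equivalently $\psi_1=\l_1\,(-\D)^{-2}\psi_1$ where $(-\D)^{-1}$ has a positive Green kernel (Jentzsch's theorem, as noted in the text). Hence $\psi_1$ is an eigenfunction of the positive compact integral operator $(-\D)^{-2}$ corresponding to its largest eigenvalue $1/\l_1$; by the Krein--Rutman theorem this top eigenvalue is simple and the corresponding eigenfunction can be taken strictly positive, which gives both algebraic simplicity of $\l_1$ and strict positivity and uniqueness (up to scalar multiple) of $\psi_1$. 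To rule out that $\l_1$ (the \emph{smallest} eigenvalue of $\D^2$, i.e.\ the \emph{largest} of $(-\D)^{-2}$) could coincide with anything produced by the $-$ sign in the factorization, one observes $(-\D)^{-1}$ is positive definite so $(-\D)^{-2}=((-\D)^{-1})^2$ is too, making all its eigenvalues positive and the Rayleigh characterization \eqref{ray} consistent with the Krein--Rutman top eigenvalue.

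\emph{Step 3 (discreteness and isolation).} Since $(-\D)^{-1}:L^2(\O)\to L^2(\O)$ is linear, compact, self-adjoint and positive, so is its square, and the spectral theorem gives a sequence of positive eigenvalues $\mu_1\ge\mu_2\ge\cdots\to0$ of $(-\D)^{-2}$ with finite multiplicities, accumulating only at $0$; translating back, $\l_k=1/\mu_k$ are isolated eigenvalues of \eqref{eigp} with $0<\l_1<\l_2\le\cdots\to\infty$ (strictness of $\l_1<\l_2$ from Step~2's simplicity), and the resolvent of $\D^2$ on $W_0^{2,2}(\O)$ is compact, so the spectrum is purely discrete with no eigenvalue below $\l_1$ and none in a one-sided punctured neighbourhood of it. \emph{The main obstacle} is Step~2: everything hinges on the Navier boundary conditions permitting the reduction to a cooperative system with a positivity-preserving resolvent, since — as the preceding discussion in the paper stresses — for Dirichlet conditions \eqref{dbc} the Green function of $\D^2$ need not be positive and $\psi_1$ need be neither positive nor unique on general domains; so the positivity, simplicity and uniqueness claims must be understood (and are proved) only in the Navier case, while for the Dirichlet case one retains only existence of $\l_1$ and discreteness of the spectrum.
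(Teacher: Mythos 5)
Your proposal is correct and follows essentially the same route as the paper, which states Lemma~\ref{lemmeig} as a summary of the preceding discussion: positivity of $\l_1$ from the Rayleigh quotient, positivity/simplicity/uniqueness of $\psi_1$ under Navier conditions via the reduction to a second-order system with a positivity-preserving resolvent (Jentzsch/Krein--Rutman), and discreteness of the spectrum from compactness of the resolvent. Your Steps 1--3 merely flesh out (correctly) the attainment of the minimizer and the Krein--Rutman application that the paper leaves implicit.
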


\begin{remark}
{\rm Note that when the operator is non-self-adjoint there are infinitely many eigenvalues.
Moreover, when we have a non-self-adjoint operator it should be pointed out that the eigenvalues
might be complex apart from the first one, which might be also positive. Then, the dominance of the first eigenvalue
would be represented by
$$\hbox{Re}\;\tau>\l_1,\quad \hbox{for any other eigenvalue $\tau$}.$$
However, since the bi-harmonic operator $\Delta^2$ is self-adjoint all the eigenvalues are real and
the geometric multiplicity equals the algebraic multiplicity.
Moreover, the resolvent of the bi-harmonic operator $\Delta^2$ is a compact so, owing to \cite[Theorem V I.8] {Bre}, the spectrum is discrete.}

\end{remark}



\noi Subsequently, by the expression for the first eigenvalue $\l_1$ of
the problem \eqref{eigp}, we find an equivalent inequality to \eqref{ineq1}
\begin{equation}
\label{ineq2}
    \tex{\frac{r_{\max}^2}{2} \int\limits_\O |(-\D)^{-1/2}v|^2 \leq \frac{1}{2\l_1}
    \Big(\frac{\int_\O |\nabla v|^2}{\int_\O
    |v|^{p+1}}\Big)^{\frac{2}{p-1}} \int\limits_\O |\nabla v|^2=
    \frac{1}{2\l_1} \Big(\frac{\big(\int_\O |\nabla v|^2\big)^{p+1}}{\big(\int_\O
    |v|^{p+1}\big)^2}\Big)^{\frac{1}{p-1}},}
\end{equation}
such that
\begin{equation*}
    \tex{\frac{r_{\max}^2}{2} \int\limits_\O |(-\D)^{-1/2}v|^2 \leq \frac{1}{\l_1}
    \frac{p+1}{p} \mc{H}_v(r_{\max})= M_1 \mc{H}_v(r_{\max}),}
\end{equation*}
with the constant $M_1=\frac{1}{\l_1} \frac{p+1}{p}>0$
independent of $v$ but depending on the first eigenvalue $\l_1$ of
the problem \eqref{eigp}. Thus,
\begin{equation*}
    \tex{\phi_v(r_{\max})\geq \mc{H}_v(r_{\max}) - \g M_1 \mc{H}_v(r_{\max})= \mc{H}_v(r_{\max}) (1-\g M_1),}
\end{equation*}
and, hence, $\phi_v(r_{\max})>0$, i.e.,
$$ \tex{
   \mc{H}_v(r_{\max}) -  \g \frac{r_{\max}^2}{2} \int_\O |(-\D)^{-1/2}v|^2>0,}
   $$
 for all non-zero $u$ provided
that $\g < \frac{1}{M_1}= \l_1 \frac{p}{p+1} $, and
   $$
    \tex{
   \mc{H}_v(r_{\max}) > \big(\frac{1}{2}-\frac{1}{p+1}\big)\big(\frac{1}{K_1}\big)^{\frac{1}{p-1}}.
 }
   $$
These estimations for the parameter $\g$ provide us with a critical value from which we are able to obtain the existence and multiplicity of
solutions for the equation \eqref{fbnew}.
We now discuss the different possibilities
depending
on the possible choices of $\g$, from the previous estimations.
Firstly, note again that, $\int_\O |v|^{p+1}>0$
is always positive. Hence, our analysis will focus on the value of the parameter $\g$.


Thus, if we assume that $\g < \frac{1}{M_1}$, then, since in
this case $$
 \tex{
 \mc{H}_v(r_{\max}) > \big(\frac{1}{2}-\frac{1}{p+1}\big)\big(\frac{1}{K_1}\big)^{\frac{1}{p-1}},
 }
 $$
 it is clear that, by the fibering method, there exists exactly one solution of
\eqref{imp}. Indeed, by the definition and the analysis performed
above, the fibering map $\phi_v(r)$ is a strictly increasing
function for $r<r_{\max}$, and decreasing for $r>r_{\max}$. Thus,
there exists a unique value of $$
r_1(v)>0\quad \hbox{such
that}\quad r_1(v)v=u
  $$
  is a critical point of the functional
$\mc{F}_\g(u)$ in \eqref{funfb}. Also, because $\o_\g'(r_1(v))<0$,
the unique critical point $r=r_1(v)$, that fibering map $\phi_v$
has, will be a local maximum, since $\phi_v''(r_1(v))<0$. In
addition, we have that $\lim_{r\rightarrow +\infty}
\phi_v=-\infty$. This kind of behaviour is shown in Figure
\ref{FF2}.

We must point out, as we shall see below, that, after using
Lusternik--Schnirel'man theory, we cannot assure that there exists
a unique solution since this topological method provides us with a
countable family of solutions and from the fact that the domain
could be very large, the possibility of having more than one
solution cannot be ruled out. Hence, to be precise, we shall say
that there exists at least one solution.

\begin{figure}[!htb]
\begin{center}
\includegraphics[width=10cm]{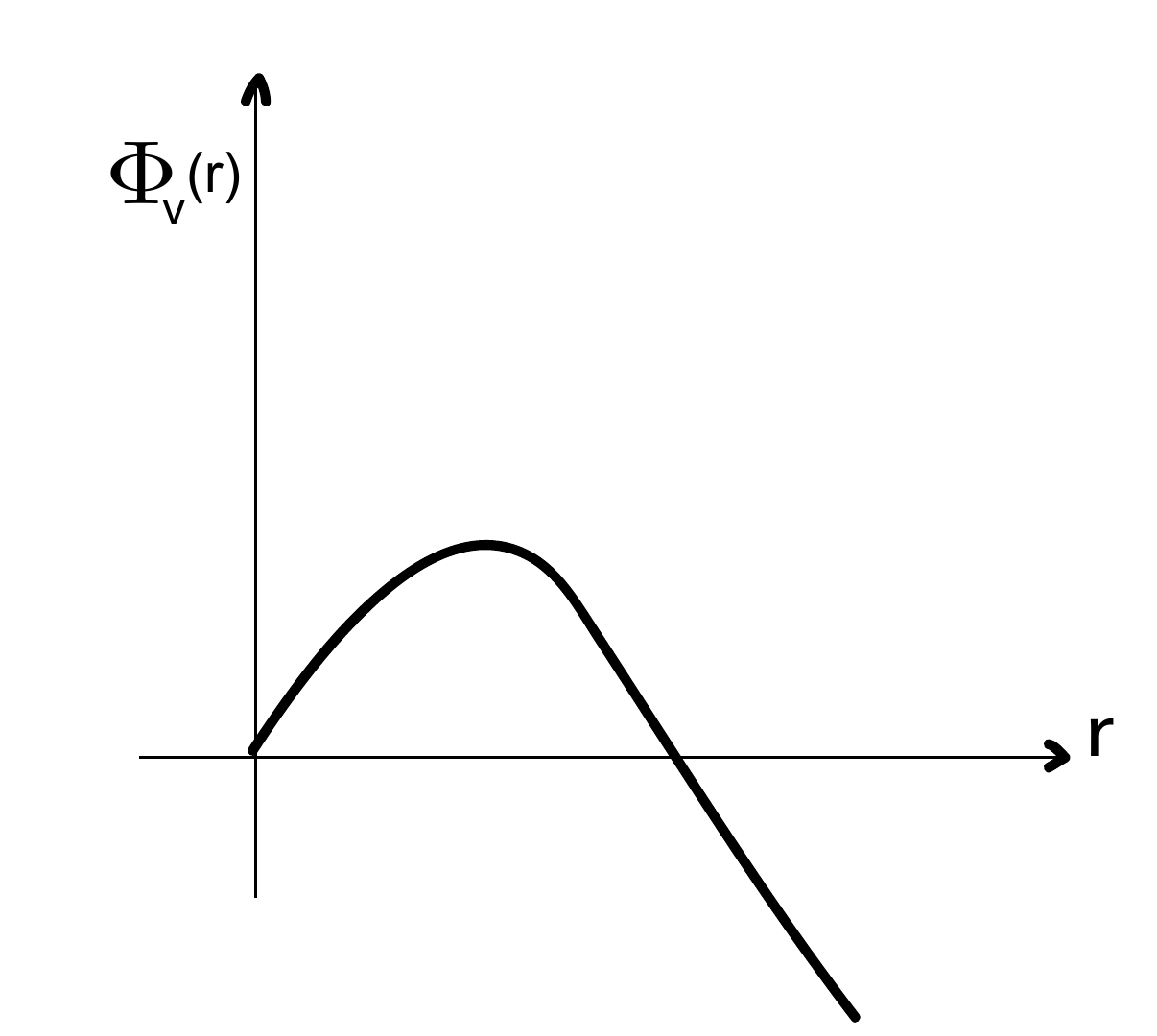}
\caption{Profile of the fibering map for $\g< \frac{1}{M_1}$: a
unique solution.}
 \label{FF2}
\end{center}
\end{figure}

Moreover,  if $\g$ sufficiently large, i.e., $\g> \frac{1}{M_1}$,
and assuming only positive solutions, there is no such critical
points, since the fibering map $\phi_v$ is then  a strictly
decreasing function. However, for oscillatory solutions of
changing sign, we shall show that the number of  possible critical
points of the functional \eqref{funfb} increases with the value of
the parameter $\g$. Indeed, fix a value of the parameter $\g$
bigger than $\frac{1}{M_1}$ but smaller than $\l_\b -\e$, where
$\l_\b$ is the $\b$-eigenvalue of the linear bi-harmonic operator
\eqref{eigp} such that
 \be
 \label{psi11}
 \tex{
 \psi_\b:=\sum_{\left| k\right|=\b} c_k
\hat{\psi}_k,
 }
 \ee
 where  $|\b| > 1$, under the natural ``normalizing"
constraint $$\tex{\sum_{\left| k \right|=\b} c_k=1.}
  $$
   Here, \ef{psi11} represent
the associated eigenfunctions to the eigenvalue $\l_\b$ and
$\{\hat{\psi}_1,\cdots,\hat{\psi}_{M_\b}\}$  is a basis of the
eigenspace of dimension $M_\b$.
 Thus, we
obtain that, for a solution of the form $u=r \psi_\b$, we will
have $M_\b$ corresponding solutions similar to the one obtained in
the previous case, i.e., when the parameter $\g < \frac{1}{M_1}$
represented by \ref{FF2}. Indeed, substituting $u=r \psi_\b$ into
the functional \eqref{funfb} for $\g=\l_\b-\e$, we have
 $$
\tex{\mc{F}_\g(r\psi_\b):=\frac{r^2\e}{2} \int\limits_\O
|(-\D)^{-1/2} \psi|^2 -
    \frac{r^{p+1}}{p+1} \int\limits_\O
    |\psi|^{p+1},}
    $$
  and performing a similar analysis as the one done previously,
   we will have $\b$--critical points (corresponding to the dimension of the eigenspace)
  of the above form represented by Figure \ref{FF2}.
Also, we will provide details below proving that the  number of
solutions can even go to infinity as the parameter $\g$ goes to
infinity.

Nevertheless, to complete the problem, we add a topological analysis to
the algebraic argument mentioned above. In order to estimate the
number of critical points of a functional, we shall need to apply
 Lusternik--Schnirel'man's (L--S) classic theory of calculus of
variations. Thus, the number of critical points of the functional
\eqref{funfb} will also depend on the category of the functional
subset on which the fibering method is taking place.


This topological theory for potential compact operators is a
natural extension of the standard minimax principles which
characterize the eigenvalues of linear compact self-adjoint
operators. Namely, denoting by $\l_1,\l_2,\cdots$  the real
eigenvalues of a self-adjoint compact operator $L$, ordered by
their values with multiplicities, there holds:
$$
  \tex{\l_\b=\sup\limits_{[S^{N-1}]}\,\,\, \min\limits_{v\in S^{N-1}} \left\langle
Lv,v\right\rangle,}
$$
 where $S^{N-1}$ denotes the unit sphere in
an arbitrary $N$-dimensional linear subspace $\Sigma$ of the
corresponding functional space $H$, and $[S^{N-1}]$ denotes the
class of such spheres as $\Sigma$ varies in $H$. Thus, applying
the calculus of variations theory to an operator $L$, the
eigenvalues of the operator $L$ are precisely the critical values
of the functional $\left\langle Lv,v\right\rangle$ on the unit
ball $\p\Sigma=\{v\,:\,||v||=1\}$ of $H$.


The question was how/if that idea (involving eigenvalues) could be
extended to more general {\em nonlinear potential} operators and,
hence, general smooth functionals. To do so,
Lusternik--Schnirel'man introduced the concept of {\em category}
providing an estimate of the number of different critical points
of a functional on the projective spaces. However, the first
problem to be faced is that one needs to find the corresponding
and suitable functional subsets. Introducing the topological
concept of the {\em genus} of a set (that we will use later on)
Krasnosel'skii in the 1951 \cite{Kras51} (and later on studied by
Borisovich in 1955; see further details in \cite[p.~358]{Kras})
avoided the transition to the projective spaces obtained by
identifying points of the sphere which are symmetric with respect
to the centre, needed to estimate the category of
Lusternik--Schnirel'man. In those terms, the genus of a set
provides us with a lower bound of the category. Moreover, it is
clear that an estimate of the number of critical points of a
functional is at the same time an estimate of the number of
eigenvectors of the gradient functional (in Krasnosel'skii's
terms) and, hence, of the number of solutions of the associated
nonlinear equation.

In our particular case, this functional subset is the following:
\begin{equation}
 \label{R0}
    \tex{ \mc{R}_{0,\g}=\Big\{v\in W^{2,1}_0(\O)\,:\, \int\limits_\O |\nabla v|^2 -
    \g \int\limits_\O |(-\D)^{-1/2}v|^2 =1\Big\}.}
\end{equation}
 Here, ``1" on the right-hand side plays no role and any positive constant would do.



According to the L--S approach (see \cite{Berger, KrasZ, PohFM},
etc.), in order to obtain the critical points of a functional on
the corresponding functional subset, $\mc{R}_{0,\g}$, one needs to
estimate the category $\rho$ of that functional subset. Thus, the
category will provide us with the number of critical points that
belong to the subset $\mc{R}_{0,\g}$. This  depends on the value
of the parameter $\g$. Namely, similar to \cite{PohFM, GMPSob},
the $\rho(\mc{R}_{0,\g})$ is given by the number of eigenvalues
(with multiplicities) of the corresponding linear eigenvalue
problem satisfying:
 \be
 \label{rho1}
  \tex{
\rho(\mc{R}_{0,\g})= \sharp \{\nu_\b < 1\}, \quad \mbox{where} }
\ee
\be
\label{rho2}
 \tex{
 -\D \psi_\b - \g (-\D)^{-1} \psi_\b= \nu_\b \psi_\b \,\,\,\,\mbox{in}
 \,\,\,\, \O, \quad \psi_\b=0 \,\,\, \mbox{on} \,\,\, \p \O
 }
 \ee
 (recall that the Navier condition $\D \psi=0$ is then valid
 automatically). Thus, by studying the eigenvalue problem
 \ef{rho2}, a sharp estimate of the category \ef{rho1} gets not
 that straightforward and easy, so we will need some extra
 analysis via embeddings of the corresponding functional spaces
 involved. However, some preliminary important conclusions from
 \ef{rho2} are indeed, possible. For instance, for $\g>0$ sufficiently large (in particular, $\g >\frac{1}{M_1}$), having on the
 right-hand side of \ef{rho2} special operators of different
 signs,
  $$
  -\D > 0 \andA -\g (-\D)^{-1}<0 \quad (\g >0)
  $$
  (and  the second one is ``weaker" in the  sense of compact
  embeddings), we have:
   \be
   \label{rho3}
    \fbox{$
\rho(\mc{R}_{0,\g}) \to + \iy \asA \g \to + \iy.
 $}
 \ee
 Since $\rho(\mc{R}_{0,\g})$ measures, at least, a lower bound of the total
 number of (L--S) solutions, \ef{rho3} clearly proves that an
 arbitrarily large number of various solutions can be achieved by
 enlarging the parameter $\g \gg 1$.

 \ssk

Therefore, by \cite{PohFM, GMPSob, GMPSob2} and as mentioned
above, if we look for critical points of the functional $\mc{G}_\g
(v)$ \eqref{spfunc} on the set $\mc{R}_{0,\g}$ it will be
necessary to estimate the category $\rho$ of that set
$\mc{R}_{0,\g}$.  The critical values $c_\b$ and the corresponding
critical points $\{v_\b\}$ are:
\begin{equation}
\label{cat} \tex{c_\b := \inf\limits_{\mc{A}\in \mc{A}_\b}
\sup\limits_{v\in \mc{A}} \mc{G}_\g (v)} \quad  (\b=1,2,3,...),
\end{equation}
where $\mc{G}_\g (v)$ is the functional defined by \eqref{spfunc}
and $$
  \mc{A}_\b :=\{\mc{A}\,:\, \mc{A}\subset
\mc{R}_{0,\g},\,\hbox{compact subsets},\quad \mc{A}=-\mc{A}\quad
\hbox{and}\quad \rho(\mc{A}) \geq \b\},
  $$
   is the class of closed
sets in $\mc{R}_{0,\g}$ such that, each member of $\mc{A}_\b$ is of
genus (or category) at least $\b$ in $\mc{R}_{0,\g}$. The fact that
$\mc{A}=-\mc{A}$ comes from the definition of genus
(Krasnosel'skii \cite[p.~358]{Kras}) such that, if we denote by
$\mc{A}^*$ the set disposed symmetrically to the set $\mc{A}$, $$
  \mc{A}^*=\{ v\,:\, v^*=-v\in \mc{A}\},
  $$
   then, $\rho(\mc{A})=1$
when each simply connected component of the set $\mc{A} \cup
\mc{A}^*$ contains neither of the pair of symmetric points $v$ and
$-v$. Furthermore, $\rho(\mc{A})=\b$ if each subset of $\mc{A}$ can
be covered by, a minimum, $\b$ sets of genus one, and without the
possibility of being covered by $\b-1$ sets of genus one.

In particular, we know that the genus of an $N$-dimensional sphere
is equal to $N+1$. Moreover, it is known that applying an odd
continuous transformation ${\bf B}$, that we define as admissible,
we find that $${\bf B}(-v)=-{\bf B} v,\quad \rho({\bf
B}\mc{A})\geq \rho(\mc{A}).$$ Hence, assuming the class of compact
sets $\mc{A}_\b$ as subsets of the form ${\bf B} S^{\b-1} \subset
\mc{R}_{\g,0}$, with $S^{\b-1}$ representing a suitable
sufficiently smooth $(\b-1)$-dimensional manifold (for example,
the sphere) then, we can assure that in the class $\mc{A}_\b$ can
occur sets of genus not less than $\b$, $$\rho(\mc{A}_\b)\geq \b=
\rho(S^{\b-1}),$$ because $\mc{A}_\b \subset {\bf B} S^{\b-1}$.
One cannot forget that there can also be other sets, on a
different class, of genus $\b$. As a consequence, and by
definition, we find that $$ c_1\leq c_2 \leq \cdots\leq
c_{l_{0,\g}},$$ with $l_{0,\g}=l_{0,\g}(\mc{R}_{0,\g})$ standing
for the category of $\mc{R}_{\g,0}$. Indeed, taking $\e>0$, by
definition of the critical values $c_{\b+1}$, we have that a set
$A_1 \in \mc{A}_{\b+1}$ exists, such that
  $$
  \tex{\sup_{v\in A_1} \mc{G}_\g
(v) < c_{\b+1} + \e.}
$$
 Hence, if $A_1$ contains a subset $A_0 \in
\mc{A}_\b$ such that
  $$
  \tex{\sup_{v\in A_0} \mc{G}_\g (v) \leq
\sup_{v\in A_1} \mc{G}_\g (v)< c_{\b+1} + \e,} \quad \mbox{and}
  $$
 $$
 \tex{c_\b
=\inf_{\mc{A}\in \mc{A}_\b} \sup_{v\in \mc{A}} \mc{G}_\g (v) \leq
\sup_{v\in \mc{A}_1} \mc{G}_\g (v)< c_{\b+1} + \e,}
   $$
then,
$$c_\b < c_{\b+1}.$$
   Roughly speaking, since the dimension of the sets
$\mc{A}$ belonging to the classes of sets $\mc{A}_\b$ increases
with $\b$, this guarantees that the critical points delivering
critical values \eqref{cat} are all different.


By the analysis carried out above via the fibering method to
obtain an algebraic estimate of the number of critical points for
the functional \eqref{funfb}, it follows that the category
$l_{0,\g}=l_{0,\g}(\mc{R}_{0,\g})$ of the set $\mc{R}_{\g,0}$ is
equal to the number of eigenvalues $\nu_\b$ of the linear
operator corresponding to the linear eigenvalue
problem \eqref{rho2} depending on the relation of the eigenvalues
$\nu_\b$ with respect to the parameter $\g$.

Moreover, if the quadratic form
$$\tex{ \int\limits_\O |\nabla v|^2 -
    \g \int\limits_\O |(-\D)^{-1/2}v|^2, \quad \hbox{with} \quad v\in W^{2,1}_0}
     $$
has Morse index $q>0$ and, since, we know that the functional
\eqref{funfb} is lower semicontinuous and coercive (proved below,
in the existence section), then the equation \eqref{fbnew} will
have at least $q$ distinct solutions \cite[Theorem~6.7.9]{Berger}.
Note that, the Morse index will be precisely the dimension of the
space where the corresponding form is negatively definite. This
includes all the multiplicities of the eigenfunctions involved in
the corresponding subspace providing a different approach for the
multiplicity of solutions.

Furthermore, recall that the L--S variational aspects of
construction of critical points can be closely  related to
structure of the ``essential zeros and extrema" of the basic
patterns $\{u_l\}$ of the equation \eqref{fb1}. Indeed, in the
standard elliptic second-order case (i.e., with no nonlocal term),
in  the simplest {\em radial} $N$-dimensional case, it is well
known that, by Sturm's Theorem, each solution $u_l(r)$, with
$r=|x| \ge 0$ corresponding to the genus $l \ge 1$ has precisely
$l-1$ zeros (sign changes)
or $l$  isolated local extrema points.
Here, we will
try to derive some related results for fourth-order elliptic
equations, though it cannot be done in such an impressive and
rigorous manner as for the second-order equations enjoying strong
Maximum Principle features.


Now, before computing the number of solutions in terms of  the
genus of the set $\mc{R}_{0,\g}$, let us calculate the
corresponding critical value $c_\b$ of the functional
\eqref{spfunc} on  $\mc{R}_{0,\g}$. Thus, we have that, for a
given solution of \eqref{fbnew} (critical point of \eqref{funfb})
$$
  v= C u \in \mc{R}_{0,\g},
   $$
   the following holds:
    $$
    \tex{ C=
\frac{1}{\sqrt{\int\limits_\O |\nabla u|^2 -
    \g \int\limits_\O |(-\D)^{-1/2}u|^2}}.}
    $$
 Hence, the corresponding critical value is as follows:
\be
\label{critv}
\tex{c_u = \mc{G}_\g (v)= \big(\frac{1}{2}-\frac{1}{p-1}\big) \frac{\big(\int\limits_\O |\nabla u|^2 -
    \g \int\limits_\O |(-\D)^{-1/2}u|^2\big)^{p-1}}{\big(\int\limits_\O |v|^{p+1}\big)^{\frac{2}{p-1}}}.}
\ee

Therefore, we arrive at the following possibilities:

\vspace{0.2cm}

\noindent {\bf Genus one.} For the parameter $\g \leq \frac{1}{M_1}$, we have
previously proved that there exists at least one solution. Hence, the
{\em genus} will be one. Indeed, taking a critical point denoted
by $u_1$ of the functional \eqref{funfb}, under the variational
assumptions established for the fibering method, we have that
\be
\label{genone}
u_1=r(v_1)v_1,
\ee


\noi such that stands for the critical point of the functional
\eqref{spfunc}. In other words, $v_1$ is the function in which the
subsequent infimum is achieved,
\be
\label{infimum} \tex{\inf \mc{G}_\g (v) \equiv \inf
\big(\frac{1}{2}-\frac{1}{p-1}\big) \frac{1}{\big(\int\limits_\O
|v|^{p+1}\big)^{\frac{2}{p-1}}}, \quad \hbox{with}\quad v_1 \in
\mc{R}_{0,\g}.} \ee Indeed, assuming that the domain
 $\O$ is large enough,
 let us take a two hump
structure (as done in \cite{GMPSob}) $$
 \hat{v}(x)=C[v_1(x)
+v_1(x+a)],\quad C\in \re, $$ with sufficiently large $|a|$,
If necessary, we also perform a slight modification of $\hat v(x)$
near the boundary to satisfy the boundary conditions.

 Thus, since $\hat{v}$ belongs to $\mc{R}_{0,\g}$ and,
hence, $C=\frac{1}{\sqrt{2}}$ (more precisely, $C \approx \frac
1{\sqrt 2}$), we have
   $$ c_{\hat{v}}=\mc{G}_\g (\hat{v}) = 2
\mc{G}_\g (v_1)> \mc{G}_\g (v_1)=c_{v_1}\equiv c_1.$$ Observe
that, since $\g\leq \frac{1}{M_1}$, we find that
 $$
 \tex{
 \big(\int\limits_\O |\nabla
u|^2 -
    \g \int\limits_\O |(-\D)^{-1/2}u|^2\big) >0.
 }
    $$
 Thus, for any $\hat{v}=M v_1$ with $v_1\in \mc{R}_{0,\g}$, such
that we  have that
$$
 \tex{
\big(\int\limits_\O |\nabla \hat{v}|^2 -
    \g \int\limits_\O |(-\D)^{-1/2}\hat{v}|^2\big) =M^2>1,
 }
    $$
and, hence,
$$
 \mc{G}_\g (\hat{v}) > \mc{G}_\g (v_1),
$$
 meaning that, in the present case, a two-hump structure cannot be a
 L--S one.


\vspace{0.2cm}

\noindent {\bf Genus greater than two.} In this particular case, we have proved
that the functional \eqref{funfb} has at least two solutions once
it is assumed that $\g> \frac{1}{M_1}$. Indeed, by that
condition over the parameter $\g$ one can say that the functional
\eqref{spfunc} is between at least two values, a maximum and a minimum one,
$$c_2 \leq  \mc{G}_\g (v)\leq c_2^*.$$
Therefore, we will obtain
at least two positive critical points for such a functional since the L--S characterization provides us with a lower bound for solutions but not exactly how many
are obtained, which confirms
our previous algebraic results. It should be pointed out that the situation in which there are infinitely many critical points is not ruled out.


To summarize, we state the following result:





\begin{lemma}
\label{lemsum}
The following possibilities for the number of critical points for
the functional $\mc{F}_\g(u)$ \eqref{funfb} hold:
\begin{enumerate}
\item[(i)] The elliptic problem \eqref{fb1} and, hence, \eqref{fbnew},
 admits an arbitrarily large number
   of different solutions  $u \in W_0^{2,2}(\O)$,
 provided that $\g> \frac{1}{M_1}$ such that
$M_1=\frac{1}{\l_1} \frac{p+1}{p}>0$, for sufficiently large
$\g\gg 1$.
\item[(ii)] Moreover, if $\g> \frac{1}{M_1}$ and we consider only positive solutions, i.e., positive critical points
of the functional \eqref{funfb}, then, there will be no solution.
\item[(iii)] And, finally,  if $\g \leq \frac{1}{M_1}$, there exists only one critical point
$r_1(v_1)v_1=u_1$ for the functional $\mc{F}_\g(u)$ that will be a local maximum.
\end{enumerate}
Each solution is obtained
as a critical point of the functional \eqref{funfb} in
$W_0^{2,2}(\O)$.
\end{lemma}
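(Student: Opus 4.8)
The plan is to collect the work already done in the preceding subsections into a single clean statement, with each of the three cases corresponding to one of the three regimes of $\g$ that were analysed via the fibering maps $\phi_v$ and the Lusternik--Schnirel'man category of the constraint set $\mc{R}_{0,\g}$. No genuinely new argument is needed; the proof is essentially a \emph{r\'esum\'e} with the references to the relevant earlier estimates made explicit. First I would recall that, by Lemma~\ref{Le fb21} and the identity \eqref{derv}, nontrivial critical points of $\mc{F}_\g$ in \eqref{funfb} correspond, through the fibering decomposition \eqref{split}, to critical points $v_c$ of the reduced functional $\mc{G}_\g$ in \eqref{spfunc} on the set $\mc{R}_{0,\g}$, together with the scalar $r_c=r(v_c)$ from \eqref{rex}; and that classical regularity then upgrades these weak solutions of \eqref{fbnew} to classical solutions of \eqref{fb1}. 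This sets the dictionary between (i)--(iii) and the analysis of $\phi_v$, $\omega_v$.

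For case (iii), $\g\le 1/M_1$ with $M_1=\frac{1}{\l_1}\frac{p+1}{p}$: I would point to the estimate \eqref{ineq2} and the ensuing inequality $\phi_v(r_{\max})\ge \mc{H}_v(r_{\max})(1-\g M_1)>0$, valid for every nonzero $v$. Combined with the computation $r^{-1}\omega_v'(r)=\phi_v''(r)$ and the strict monotonicity of $\omega_v$ (here one uses $\int_\O |v|^{p+1}>0$, recalled repeatedly), this forces $\phi_v$ to be strictly increasing on $(0,r_{\max})$, strictly decreasing on $(r_{\max},\infty)$ with $\phi_v\to-\infty$, hence to possess a unique critical point $r_1(v)>0$, which is a local maximum since $\phi_v''(r_1(v))=-(p-1)r_1(v)^{p-1}\int_\O|v|^{p+1}<0$. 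The two-hump computation $c_{\hat v}=2\,\mc{G}_\g(v_1)>c_{v_1}$ shows that no L--S critical point of higher category/genus can appear in this regime, so $r_1(v_1)v_1=u_1$ is the only one; this gives (iii).

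For cases (i) and (ii), $\g>1/M_1$: restricting to positive $v$, the same $\phi_v$ is then strictly decreasing (its would-be maximum value at $r_{\max}$ is now negative), so there is no positive critical point, which is (ii). For (i), I would invoke the category count \eqref{rho1}--\eqref{rho2}: $\rho(\mc{R}_{0,\g})=\sharp\{\nu_\b<1\}$ for the eigenvalue problem \eqref{rho2}, together with the sign/compact-embedding comparison $-\D>0$, $-\g(-\D)^{-1}<0$ that yields \eqref{rho3}, $\rho(\mc{R}_{0,\g})\to+\infty$ as $\g\to+\infty$; then the L--S minimax values \eqref{cat} produce $\rho(\mc{R}_{0,\g})$ distinct critical points $v_\b$, hence that many solutions $u_\b=r(v_\b)v_\b$ of \eqref{fbnew} and \eqref{fb1}, with the number growing without bound as $\g\gg1$. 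One should also reconcile this with the algebraic picture (each eigenfunction $\psi_\b$ of dimension $M_\b$ producing $M_\b$ fibering critical points, as in the discussion around \eqref{psi11}), and note explicitly that the L--S bound is only a lower bound, so infinitely many critical points are not excluded; this completes (i).

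The only delicate point, and the one I expect to be the main obstacle, is the passage \eqref{rho3}: asserting $\rho(\mc{R}_{0,\g})\to\infty$ requires that the nonlocal negative term $-\g(-\D)^{-1}$, although ``weaker'' than $-\D$ in the sense of compact embeddings, still drives arbitrarily many Rayleigh-quotient eigenvalues $\nu_\b$ below the threshold $1$ as $\g$ grows — i.e.\ a sharp-enough spectral count for \eqref{rho2}. I would argue this by a Courant--Fischer minimax comparison on finite-dimensional subspaces spanned by the first $k$ Dirichlet-Laplace eigenfunctions: on such a subspace the quadratic form $\int_\O|\n v|^2-\g\int_\O|(-\D)^{-1/2}v|^2$ becomes negative once $\g$ exceeds $\l_1^{(k)}\mu_k$, where $\l_1^{(k)}$, $\mu_k$ are the $k$th eigenvalues of $-\D$ and of $(-\D)^{-1}$ respectively (so $\mu_k\to 0$ but $\l_1^{(k)}\to\infty$); balancing these for each fixed $k$ shows that for $\g$ large the Morse index $q$ of the form is at least $k$, whence by \cite[Theorem~6.7.9]{Berger} (coercivity and lower semicontinuity of $\mc{F}_\g$ having been established in the existence section) there are at least $k$ distinct solutions. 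Making this balancing quantitative enough to conclude $q\to\infty$ as $\g\to\infty$ is the real content; everything else is bookkeeping of the already-proven fibering and L--S facts.
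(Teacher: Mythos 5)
Your proposal takes essentially the same route as the paper: Lemma \ref{lemsum} appears there as a summary of the preceding fibering-map and Lusternik--Schnirel'man discussion with no separate proof, and your assembly of \eqref{ineq2}, the identity $r^{-1}\o_v'(r)=\phi_v''(r)$, the two-hump computation and the category count \eqref{rho1}--\eqref{rho3} reproduces that discussion. Two small caveats. First, your Courant--Fischer argument for \eqref{rho3} is a genuine (and welcome) addition, since the paper only asserts \eqref{rho3} from the sign comparison of $-\D>0$ and $-\g(-\D)^{-1}<0$; but the negativity threshold on the span of the first $k$ Dirichlet--Laplace eigenfunctions should read $\g>\l_1^{(k)}/\mu_k$ (i.e.\ $\g>\Lambda_k^2$, with $\Lambda_k$ the $k$th eigenvalue of $-\D$), not $\g>\l_1^{(k)}\mu_k$, which is identically equal to $1$; with the corrected quotient the balancing and the conclusion that the Morse index tends to infinity as $\g\to+\iy$ stand. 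Second, in case (iii) the two-hump computation only shows that one particular two-bump candidate has a larger $\mc{G}_\g$-value than $v_1$ and hence is not the minimizer; it does not exclude other critical points, and the paper itself concedes immediately before the lemma that uniqueness cannot be guaranteed by the L--S method (downgrading the claim to ``at least one solution'' in Theorem \ref{exith}), so your assertion that it rules out all higher-genus critical points overstates what is actually established --- though this looseness is inherited from the lemma as stated rather than introduced by you.
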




\ssk

\noi {\bf Remark on $\g$-bifurcation branches: reviving the total
number of solutions.} It is worth mentioning that those values of
the parameter $\g$ can be used to represent a family of nontrivial
solutions bifurcating from the branch of trivial solutions
$(0,\g)$. This fact can provide us with interesting information
about bifurcation near an eigenvalue of higher multiplicity. In
fact, we can expect that, for multiplicity $M$, we still have at
least $M$ distinct one-parameter families emanating from $(0,\g)$.
 Indeed, one can see from \ef{fb1} that those bifurcation values
$\g_\b$, numerated by a multiindex $\b$ in $\ren$,
 coincide with the eigenvalues of the bi-harmonic operator in
 \ef{eigp} (this time, with the original Navier boundary conditions).
Then classic bifurcation-variational theory \cite{Berger, KrasZ}
suggests that  each such $\g_\b$ is indeed a bifurcation point
from zero, and each such $\g$-branch (or a finite number of
branches in the multiple cases) can be extended to $\g > \g_\b$.
This again gives us a precise estimate of a number of various
solutions for large values of $\g$. Of course, this revives the
same conclusions obtained earlier by the L--S analysis and the
fibering method.

Indeed, using  Morse index theory, available since the functional
is coercive and lower semicontinuous, we know that, every time the
Morse index changes, there is a bifurcation point. Since the Morse
index is precisely the dimension of the space where the
corresponding functional is negatively definite, including all the
multiplicities of the eigenfunctions, we will obtain a bifurcation
point for every eigenvalue of the bi-harmonic operator
\eqref{eigp}. Also, if the Morse index is infinite there will be
an infinite number of bifurcation points, in clear concordance
with  \eqref{rho3}. This analysis can provide us with a different
approach in obtaining the multiplicity of the solutions for the
functional \eqref{funfb}.

Later on, we will discuss
some bifurcation ideas in a more complicated problem, associated
with non-potential operators.

\ssk




\subsection{Existence of solutions}

\noindent Firstly, we prove that the functional \eqref{funfb} is
coercive,  that is crucial in obtaining the existence of solutions
for the functional \eqref{funfb}. Thanks to the weakly lower
semicontinuity of the first two terms of the functional
\eqref{funfb}, we have that
\begin{equation*}
     \tex{\frac{1}{2} \int\limits_\O |\nabla u|^2 - \frac{\g}{2} \int\limits_\O |(-\D)^{-1/2}u|^2
     \geq K \left\|u\right\|_{W_0^{2,1}(\O)}}
\end{equation*}
for any $u \in W_0^{2,1}(\O)$. Note that, if $u \in \mc{C}_\g$,
then
\begin{equation}
\label{ext}
    \tex{\mc{F}_\g(u)=(\frac{1}{2}-\frac{1}{p+1}) \big(\int\limits_\O |\Delta u|^2 - \g \int\limits_\O |(-\D)^{-1/2}u|^2\big).}
\end{equation}
Therefore, by the weak lower semicontinuity via Lemma\;\ref{lem1},
we conclude that the functional \eqref{funfb} is coercive and
bounded below, with $1<p<\frac{N+2}{N-2}$ if
$N>2$ and any $p>1$ if $N=1,2$.

Consequently, due to the multiplicity results described in
Lemma\;\ref{lemsum}, the following theorem summarizes the
existence of non-zero solutions for the functional \eqref{funfb}:



\begin{theorem}
\label{exith}
 For any $p>1$,  the existence of solutions
for the boundary value problem \eqref{fb1}  is as follows:
\begin{itemize}
\item If $\g \leq \frac{1}{M_1}$, with $M_1=\frac{1}{\l_1} \frac{p+1}{p}>0$, then there exists
at least one solution;


\item If $\g >\frac{1}{M_1}$, then the elliptic problem \eqref{fb1} and, hence, \eqref{fbnew},
 admits an arbitrarily large number
   of different solutions  $u \in W_0^{2,2}(\O)$ provided that $\g \gg 1$.
   In  particular,
  there exists no positive
solutions of  \eqref{fb1}.
   \end{itemize}

\end{theorem}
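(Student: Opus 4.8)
The plan is to assemble Theorem \ref{exith} directly from the preparatory material already developed, so that the argument amounts to combining a compactness/coercivity statement with the multiplicity count from Lemma \ref{lemsum}. First I would record the variational reformulation: by the discussion around \eqref{fbnew}, the elliptic problem \eqref{fb1} with Navier conditions is equivalent to the nonlocal problem \eqref{fbnew}, whose weak solutions are exactly the critical points of the $C^1$ functional $\mc{F}_\g$ in \eqref{funfb} on $W_0^{2,1}(\O)$; this is the content of Lemma \ref{Le fb21} together with \eqref{derv}--\eqref{defweak11}. Classical elliptic (Schauder) regularity, as noted in the text, upgrades any such weak solution to a classical one, so it suffices to produce nonzero critical points of $\mc{F}_\g$.

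Next I would establish existence of at least one nontrivial critical point when $\g\le 1/M_1$. Using the weak lower semicontinuity of $u\mapsto\int_\O|\nabla u|^2$ (Lemma \ref{lem1}) and the weak semicontinuity of the nonlocal quadratic term (Lemma \ref{lem2}) and of $\int_\O|u|^{p+1}$ in the subcritical range $1<p<\frac{N+2}{N-2}$ (Lemma \ref{lem3}), the functional $\mc{F}_\g$ is {\sc wls}; the coercivity computation just carried out before the theorem, namely that on the critical set $\mc{C}_\g$ one has \eqref{ext} with $\int_\O|\nabla u|^2-\g\int_\O|(-\D)^{-1/2}u|^2\ge K\|u\|_{W_0^{2,1}(\O)}^2$, shows $\mc{F}_\g$ is coercive and bounded below. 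Hence, restricting to the fibering constraint set $\mc{R}_{0,\g}$ of \eqref{R0}, the functional $\mc{G}_\g$ in \eqref{spfunc} attains its infimum \eqref{infimum} at some $v_1\in\mc{R}_{0,\g}$; by the fibering construction $u_1=r(v_1)v_1$ with $r(v_1)$ from \eqref{rex} is a nonzero critical point of $\mc{F}_\g$, corresponding to genus one. (For the general case one would, strictly speaking, need the Palais--Smale condition to make the L--S machinery run; the quoted coercivity plus the compactness of $(-\D)^{-1/2}$ and of the subcritical Sobolev embedding deliver it.) This is the $\g\le 1/M_1$ bullet.

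For the second bullet, $\g>1/M_1$, I would invoke Lemma \ref{lemsum}(i): there the Lusternik--Schnirel'man category $\rho(\mc{R}_{0,\g})$ of the constraint set is identified with the number of eigenvalues $\nu_\b<1$ of the linear problem \eqref{rho2}, and the sign analysis $-\D>0$, $-\g(-\D)^{-1}<0$ with the second operator weaker in the compact-embedding sense gives $\rho(\mc{R}_{0,\g})\to+\iy$ as $\g\to+\iy$ (this is \eqref{rho3}). Since each of the critical values $c_\b$ in \eqref{cat} is attained by a distinct critical point $v_\b$, and since the multiplicity argument with the two-hump test function shows these critical values are strictly increasing, one obtains arbitrarily many distinct nontrivial critical points of $\mc{F}_\g$ for $\g\gg1$, equivalently arbitrarily many classical solutions of \eqref{fb1}. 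Finally, the nonexistence of positive solutions for $\g>1/M_1$ follows from the fibering-map analysis: for positive $v$ the map $\phi_v$ in \eqref{funsplit}, whose stationary points solve \eqref{imp}, is strictly decreasing once $\g>1/M_1$ (so that $\int_\O|\nabla v|^2-\g\int_\O|(-\D)^{-1/2}v|^2<0$ on the relevant profiles), hence has no positive critical point; this is exactly Lemma \ref{lemsum}(ii).

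The main obstacle is the compactness step underlying the L--S count for large $\g$: one must be sure that the constrained functional $\mc{G}_\g$ on $\mc{R}_{0,\g}$ satisfies a Palais--Smale-type condition so that the min-max values \eqref{cat} are genuinely attained and the category really is a lower bound for the number of critical points. This rests on the compactness of $(-\D)^{-1/2}$ (Lemma \ref{lem2}) and of the embedding $W_0^{2,1}(\O)\hookrightarrow L^{p+1}(\O)$ for $p<\frac{N+2}{N-2}$ (Lemma \ref{lem3}), together with the coercivity estimate; assembling these into a clean Palais--Smale verification, and checking that the indefinite quadratic form defining $\mc{R}_{0,\g}$ does not destroy the manifold structure when $\nu_\b$ crosses $1$, is the delicate part. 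Everything else is bookkeeping on top of Lemmas \ref{Le fb21}--\ref{lemsum} and the coercivity computation preceding the theorem.
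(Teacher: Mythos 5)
Your proposal is correct and follows essentially the same route as the paper: reduce \eqref{fb1} to critical points of $\mc{F}_\g$ via \eqref{fbnew} and the fibering method, obtain existence from coercivity, weak lower semicontinuity and the subcritical compact embedding, and obtain the multiplicity and the nonexistence of positive solutions from Lemma \ref{lemsum} and the category estimate \eqref{rho3}. The compactness step you flag as the main obstacle is exactly where the paper spends most of its proof --- establishing strong convergence of maximizing/minimizing sequences on $\mc{C}_\g$ by contradiction arguments with the fibering maps --- so your outline matches the paper's argument, including its acknowledged delicate point.
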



\begin{remark}
{\rm All the solutions obtained for the boundary value problem
\eqref{fb1} are classical solutions by elliptic regularity for
higher-order equations (see Schauder's theory in \cite{Berger} for
further details).}
\end{remark}

\ssk

\noi{\em Proof.} Owing to the coercivity  of the functional
\eqref{funfb} and because it is also bounded below, and to the
weak lower semicontinuity, there exists a maximizing sequence
$\{u_n\}$ in $W_0^{2,1}(\O)$ for the functional \eqref{funfb} such
that
 $$
 \tex{\lim_{n \rightarrow \infty} \mc{F}_\g(u_n) = \sup_{u\in
\mc{C}_\g} \mc{F}_\g(u)>0,}
  $$
   and, hence, $\{u_n\}$ is bounded in
$W_0^{2,1}(\O)$. Then, by standard arguments, we can extract a
convergent subsequence, denoted again by $\{u_n\}$, such that $u_n
\rightharpoonup u_1$ as $n \rightarrow \infty$ for $u_1 \in
W_0^{2,1}(\O)$. In fact, such a convergence is strong in
$W_0^{2,1}(\O)$.

To this end, we argue by contradiction using the fibering maps and
the discussion made previously about the number of critical points
for such functions. Hence,  we arrive at the following situations,
either there will be no positive solution if $\g>\frac{1}{M_1}$
(the fibering map is strictly decreasing so there will not be any
critical point for the functional \eqref{funfb}), or there only
exists a classical solution if $\g \leq \frac{1}{M_1}$, with
$M_1=\frac{1}{\l_1} \frac{p+1}{p}>0$, or there are an arbitrarily
large number
   of different solutions  $u \in W_0^{2,2}(\O)$ if $\g > \frac{1}{M_1}$.

Namely, suppose that the strong convergence does not take place.
Therefore, since $u_n \in \mc{C}_\g$ and by the structure of
the functional \eqref{funfb}, we have that
\begin{equation}
\label{seqfun}
    \tex{ \mc{F}_\g(u_n):=(\frac{1}{2}-\frac{1}{p+1}) \int\limits_\O |u_n|^{p+1}.}
\end{equation}
By Lemma\;\ref{lemsum}, we know that there exists a maximum if $\g
< \frac{1}{M_1}$, where
 $$
 \tex{
 \int_\O |u|^{p+1}>0\quad \hbox{for }\quad  u\in W_0^{2,1}(\O).
 }
 $$
Hence,
passing in \eqref{seqfun} to the limit as $n \rightarrow \infty$
and using the weak semicontinuity of the third term of the
functional \eqref{funfb}, we find that, actually, $\int_\O
|u_1|^{p+1}>0$ and, consequently,
\begin{equation*}
    \tex{\mc{F}_\g(u_1)=\lim_{n \rightarrow \infty} \mc{F}_\g(u_n)= \sup_{u\in \mc{C}_\g} \mc{F}_\g(u),}
\end{equation*}
contradicting the nonexistence of a strong convergence in
$W_0^{2,1}(\O)$.

On the other hand, now, again by the coercivity of the functional
\eqref{funfb} and the weak lower semicontinuity, there exists a
minimizing sequence $\{u_n\}$ in $W_0^{2,1}(\O)$ for the
functional \eqref{funfb} so that $$\tex{\lim_{n \rightarrow
\infty} \mc{F}_\g(u_n) =\inf_{u\in \mc{C}_\g} \mc{F}_\g(u).}$$
Hence, by standard arguments, we can extract a convergent
subsequence, denoted again by $\{u_n\}$, so that  $u_n
\rightharpoonup u_1$ in $W_0^{2,1}(\O)$, to a certain $u_1 \in
W_0^{2,1}(\O)$.

Therefore, when $\g < \frac{1}{M_1}$, we have that there exists
$r_1$ such that $\mc{F}_\g(r_1(v)v)<0$. Hence,
\begin{equation}
\label{negfun}
\tex{\inf_{u\in\mc{C}_\g} \mc{F}_\g(u) <0,}
\end{equation}
since the fibering map is decreasing in an interval around the
value $r_1$,  which  the minimum is achieved at. Indeed, we can
write the functional $\mc{F}_\g(u)$ in \eqref{funfb} for any $u_n
\in \mc{C}_\g$
\begin{equation*}
    \tex{\mc{F}_\g(u_n)=(\frac{1}{2}-\frac{1}{p+1}) \Big(\int\limits_\O |\nabla u_n|^2
    - \g \int\limits_\O |(-\D)^{-1/2}u_n|^2\Big),}
\end{equation*}
and then
\begin{equation*}
    \tex{(\frac{1}{2}-\frac{1}{p+1}) \g \int\limits_\O |(-\D)^{-1/2} u_n|^2=
    (\frac{1}{2}-\frac{1}{p+1}) \int\limits_\O |\nabla u_n|^2 -\mc{F}_\g(u_n).}
\end{equation*}
So, passing to the limit as $n\rightarrow \infty$, we arrive at
$\g \int_\O |(-\D)^{-1/2}u_1|^2>0$. Moreover, since we are
assuming that the convergence of  $\{u_n\}$ is not strong in
$W_0^{2,1}(\O)$, we have that
\begin{equation}
\label{weaklim}
\tex{  \int\limits_\O |\nabla u_1|^2
    < \liminf_{n\rightarrow \infty} \int\limits_\O |\nabla u_n|^2.}
        \end{equation}
Note that the sign ``$\leq"$ is already obtained by the lower
semicontinuity. Thus, using the fibering maps \eqref{funsplit} we
have
\begin{equation*}
    \tex{ \phi_{v_n}'(r)=r\int\limits_\O |\nabla v_n|^2 -
    r\g \int\limits_\O |(-\D)^{-1/2} v_n|^2 - r^{p}\int\limits_\O
    |v_n|^{p+1},}
\end{equation*}
so that $u_n=r(v_n)v_n$ and
\begin{equation*}
    \tex{ \phi_{v_1}'(r)=r\int\limits_\O |\nabla v_1|^2 -
    r\g \int\limits_\O |(-\D)^{-1/2} v_1|^2 - r^{p}\int\limits_\O
    |v_1|^{p+1},}
\end{equation*}
with $u_1=r_1v_1$. From these expressions, it is easy to see that
$\phi_{v_n}'(1)=0$ for any $n$ ($u_n$'s are critical points of the
functional \eqref{funfb}) and thanks to the expression of
$\phi_{v_n}''(r)$ in \eqref{convex}, we also find that
$\phi_{v_n}'(r) <0$ for $0<r<1$. Consequently, applying
\eqref{weaklim} and the consequences explained previously yields
 $$
 \mc{F}_\g(r_1 u_1)< \mc{F}_\g(u_1)< \lim_{n \rightarrow \infty}
\mc{F}_\g(u_n) =\inf_{u\in \mc{C}_\g} \mc{F}_\g(u),
  $$
   which
contradicts the nonexistence of a strong convergence in
$W_0^{2,1}(\O)$.

Furthermore, using a different argument (but related to), owing to
Lemma\;\ref{lem3}, we know that the third term of the functional
$\mc{F}_\g(u)$ \eqref{funfb} is actually weakly semicontinuous if
$p<\frac{N+2}{N-2}$, i.e., taking the same convergent subsequence
in $W_0^{2,1}(\O)$,
\begin{equation*}
    \tex{\int\limits_\O |u_n|^{p+1} \rightarrow  \int\limits_\O
    |u_1|^{p+1}=N
    \quad \hbox{as} \quad n\rightarrow \infty.}
\end{equation*}
Thus, we only need to prove that the first two terms are actually
convergent. Namely,
\begin{equation*}
    \tex{  \int\limits_\O |\nabla u_1|^2 - \g \int\limits_\O |(-\D)^{-1/2}u_1|^2
    = \liminf_{n\rightarrow \infty} \big(\int\limits_\O |\nabla u_n|^2 - \g \int\limits_\O |(-\D)^{-1/2}u_n|^2\big).}
\end{equation*}
To do so, we argue again by contradiction, supposing that
\begin{equation*}
    \tex{  \big(\int\limits_\O |\nabla u_1|^2 - \g \int\limits_\O |(-\D)^{-1/2}u_1|^2\big)
    < \liminf_{n\rightarrow \infty} \big(\int\limits_\O |\nabla u_n|^2 - \g \int\limits_\O |(-\D)^{-1/2}u_n|^2\big),}
\end{equation*}
since, by the weak lower semicontinuity, we already know that the
sign  $``\leq"$ is achieved. Moreover, if $u_1$ is actually a
critical point of the functional \eqref{funfb},  by
\eqref{negfun}, we have
\begin{equation*}
    \tex{ \big(\int\limits_\O |\nabla u_1|^2 - \g \int\limits_\O |(-\D)^{-1/2}u_1|^2\big)
    \leq \int\limits_\O |u_1|^{p+1}=\hat N.}
\end{equation*}
Thus, assuming that the inequality is not true yields
\begin{equation*}
    \tex{ \big(\int\limits_\O |\nabla u_1|^2 - \g \int\limits_\O |(-\D)^{-1/2}u_1|^2\big)
    < \hat N.}
\end{equation*}
Then, it is possible to find $s_1>1$ such that $u_s=s_1 u_1$ and
\begin{equation*}
    \tex{ \Big(\int\limits_\O |\nabla u_s|^2 - \g \int\limits_\O |(-\D)^{-1/2}u_s|^2\Big)
    = \hat N.}
\end{equation*}
However,
\begin{equation*}
    \tex{ \int\limits_\O |u_s|^{p+1} =  s_1^{p+1} \int\limits_\O |u_1|^{p+1}= s_1^{p+1} \hat N > \hat N,}
\end{equation*}
which contradicts the assumptions. Hence, $u_n \rightarrow u_1$ in
$W_0^{2,1}(\O)$ and
\begin{equation*}
    \tex{\mc{F}_\g(u_1)=\lim_{n \rightarrow \infty} \mc{F}_\g(u_n)= \sup_{u\in \mc{C}_\g} \mc{F}_\g(u),}
\end{equation*}
which again contradicts the nonexistence of a strong convergence in
$W_0^{2,1}(\O)$.

To conclude the proof, one can combine these existence results
with those in Lemma\;\ref{lemsum} to arrive at the desired
assumptions of the theorem. \quad $\qed$




\ssk

\noi{\bf Remark.} Note that the first situation of existence of at
least one solution for the functional is consistent with the
conditions explained by \eqref{rex} for the existence of a one
turning point of the fibering map $\phi_v$, denoted by
\eqref{funsplit}. In other words,
\begin{align*}
    & \tex{\hbox{either}\quad \int\limits_\O |\nabla v|^2 - \g \int\limits_\O
    |(-\D)^{-1/2}v|^2>0\quad \hbox{and}\quad \int\limits_\O
    |v|^{p+1}<0 \,\,\,(\mbox{unavailable});} \\ & \tex{\hbox{or}\quad \int\limits_\O |\nabla v|^2 -
    \g \int\limits_\O |(-\D)^{-1/2}v|^2<0\quad \hbox{and} \quad\int\limits_\O |v|^{p+1}>0.}
\end{align*}



\subsection{The variational problem in $\ren$ in two cases: $\g>0$ and $\g <0$}

 \label{S.RN}

 In general, the results presented above can be
accomplished assuming posing the equation \eqref{fb1} in the whole
$\ren$ instead of in a bounded domain $\O \subset \ren$. However,
to do so, we need to consider the integrals over $\ren$ and the
functional setting over a certain weighted Sobolev space instead
of $W_0^{2,2}(\O)$ previously assumed. Such a functional setting
of the problem in $\ren$ is absolutely key in what follows.
Indeed, a proper functional setting assumes certain admissible
asymptotic decay of solutions at infinity, which, for \ef{fb1}, is
governed by the corresponding linearized operator.

\ssk

\noi\underline{\sc The case $\g >0$.}
 Assuming that $\g>0$ and, for
simplicity, the radial geometry, with $u=u(r)$, with $r = |x| \ge
0$, we then obtain, as $r \to \infty$,
 \be
 \label{inf1}
  \begin{matrix}
 \D^2 u \equiv u^{(4)}+ \frac {2(N-1)} r \,\,\, u'''+...=\g u+...
 \LongA  \ssk\ssk\\
  u(r)=C_1 \, {\mathrm e}^{- \g^{1/4} r} + r^{-
 \frac{N-1}2}\big[C_2 \cos (\g^{1/4}r)+ C_3 \sin
 (\g^{1/4}r)\big]+...
 \, ,
 \end{matrix}
 \ee
 where $C_{1,2,3}$ are arbitrary constants. Overall, we observe a
 3D bundle of solutions decaying at infinity, which looks rather
positive. However, one can see that the second term does not look
that good, and, in particular, for $N=1$, this represents
non-decaying oscillations as $r \to +\iy$, which do not belong to
any suitable functional space. For $N \ge 2$, these are decaying
but never belong to, say, $L^2(\ren)$ (!).


  Moreover, the {\em exponential} bundle obtained from \ef{inf1}
  for $C_2=C_3=0$ is {\em one-dimensional} only:
 \be
 \label{inf2}
  u(r)=C_1 \, {\mathrm e}^{- \g^{1/4} r}+... \asA r \to \iy, \quad C_1 \in \re.
  \ee
   Obviously, this 1D bundle is not
  enough to, say, ``shoot from infinity" {\em two} symmetry
  boundary conditions at the origin:
   \be
   \label{bc1}
   u'(0)=u'''(0)=0,
    \ee
    since, algebraically, at least two parameters are needed to
    satisfy \ef{bc1}.
Of course, the exponentially decaying solutions \ef{inf2} are the
best possible and belong to any reasonable functional space
naturally involved.


If the whole 3D bundle in \ef{inf1} is involved (note that this
can contradict any reasonable variational setting, but we are not
precise in that here), then the shooting problem becomes {\em
overdetermined}: using {\em three} parameters $C_{1,2,3}$ to
satisfy {\em two} boundary conditions \ef{bc1}. The set of
solutions is then expected not to be discrete and should be represented
via continuous curves. This case is more artificial and is less
interesting.

\ssk

\noi \underline{\sc The case $\g<0$.} This case is more promising.
Indeed, calculating the admissible asymptotics from \ef{inf1}
yields a {\em two-dimensional} exponential bundle:
 \be
 \label{inf1N}
  \tex{
 u(r)= {\mathrm e}^{-r |\g|^{1/4}/ \sqrt 2} \big[
C_1 \cos\big( \frac {|\g|^{1/4} }{\sqrt 2} r\big)+ C_2 \sin\big(
\frac {|\g|^{1/4} }{\sqrt 2} r\big)\big]+...\, , \quad C_{1,2} \in
\re. }
 \ee
 Matching with two symmetry boundary conditions \ef{bc1} yields a
 well-posed and well-balanced algebraic ``2D--2 shooting problem".
 A similar (but easier and without non-local terms) fourth-order
 problem was studied in \cite[\S~6]{GMPSob2}. It was shown that
 (for the analogy of the present case $\g <0$)
 such a problem can admit a countable set of countable families of
 solutions, where only the first infinite family is the L--S one. We expect
 that several properties and results of that study can be
 translated to the nonlocal problem under consideration, but this will require some additional work
 to be done in a separate paper \cite{CHPabloRN}.








\section{Global existence for the stable Cahn--Hilliard equation}
 \label{S5}

 Next, returning to the fourth-order non-stationary parabolic models presented at the beginning of this paper,
  without loss of generality, we consider the Cauchy problem for
the stable equation \ef{sCH}, with bounded and, if necessary,
exponentially decaying at infinity initial function $u_0(x)$.
 We are going to apply a scaling method, which gets rid in the limit of any
 lower-order or other perturbations in the PDE's, leaving only
 the main principal operators and nonlinearities that might be
 responsible for a finite time blow-up singularity. Therefore,
 to reveal key aspects of the method, we
 can consider this maximally simplified model \ef{sCH}.
 Our
main goal is to prove the following:

 \begin{theorem}
  \label{Th.Ex}
  The Cauchy problem $(\ref{sCH})$ in the parameter range
  $(\ref{sub1})$ admits a unique global classical solution, and
  moreover, it is uniformly bounded:
   \be
   \label{pp1}
   |u(x,t)| \le C \inB \ren \times \re_+.
    \ee
 \end{theorem}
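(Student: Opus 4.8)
The plan is to argue by contradiction using the scaling (blow-up rescaling) technique. Suppose that a classical solution $u$ of \ef{sCH} is not uniformly bounded on $\ren \times \re_+$. Then there exists a sequence of points $(x_k, t_k)$ with $t_k \to T \le +\iy$ such that $M_k := |u(x_k, t_k)| = \sup_{\ren \times [0,t_k]} |u| \to +\iy$. Around each such point I would introduce the rescaled variables
$$
 \tex{ y = \frac{x - x_k}{\l_k}, \quad s = \frac{t - t_k}{\l_k^4}, \quad v_k(y,s) = \frac{1}{M_k}\, u(x_k + \l_k y,\, t_k + \l_k^4 s), \quad \l_k = M_k^{-(p-1)/4}, }
$$
so that $v_k$ satisfies the rescaled equation
$$
 \tex{ (v_k)_s = -\D^2 v_k + \D(|v_k|^{p-1} v_k) \inB \ren \times (-t_k \l_k^{-4}, 0], }
$$
with the normalization $|v_k(0,0)| = 1$ and $|v_k| \le 1$ on its domain of definition. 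The exponent of $\l_k$ is chosen precisely so that the nonlinear diffusion term keeps its coefficient equal to $1$ under the scaling; this is the natural (self-similar) scaling attached to \ef{sCH}. Note that as $k \to \iy$ the time interval exhausts $(-\iy, 0]$ (here one uses that $M_k \to \iy$ so $\l_k \to 0$ and, if $T = \iy$, that $t_k \to \iy$; if $T < \iy$ one splits cases according to whether $t_k \l_k^{-4}$ stays bounded or not, the bounded case being ruled out by local parabolic regularity and the initial data bound).

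Next I would pass to the limit. Since $|v_k| \le 1$ uniformly, interior $L^q$–parabolic estimates for the fourth-order operator $\p_s + \D^2$ (applied to the equation written as $(v_k)_s + \D^2 v_k = \D(|v_k|^{p-1}v_k)$, whose right-hand side is bounded in every $L^q_{\rm loc}$) give uniform bounds on $v_k$ in $W^{4,1}_{q,\rm loc}$ and hence, by Sobolev embedding and bootstrapping, in $C^{4+\a, 1 + \a/4}_{\rm loc}$. Therefore a subsequence converges in $C^4_{\rm loc}$ to an entire solution $v$ of
$$
 \tex{ v_s = -\D^2 v + \D(|v|^{p-1} v) \inB \ren \times (-\iy, 0], \quad |v| \le 1, \quad |v(0,0)| = 1. }
$$
The heart of the argument is then a Liouville-type theorem: I must show that the only bounded entire solution of this rescaled problem is $v \equiv 0$, which contradicts $|v(0,0)| = 1$. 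Here the subcriticality hypothesis \ef{sub1}, $p \le p_{\rm S} = (N+2)/(N-2)$, enters decisively. The natural route is an energy/Pohozaev argument: the stable C--H equation is gradient in $H^{-1}$, with Lyapunov functional (cf.\ \ef{ch4}--\ef{ch5}) of the form $E[v] = \frac12 \int |\n v|^2 - \frac{1}{p+1}\int |v|^{p+1}$ decreasing along the flow, and one combines the monotonicity of $E$ with a rescaled-variable energy identity (multiplying by $(-\D)^{-1} v$ and by $y \cdot \n v$, integrating, and exploiting that $|v|$ is globally bounded) to force $v$ to be $s$-independent and then to be a bounded entire steady state $-\D^2 v + \D(|v|^{p-1}v) = 0$ on $\ren$; applying $(-\D)^{-1}$ reduces this to $-\D v = |v|^{p-1}v$ on $\ren$, and in the Sobolev-subcritical range the only bounded (indeed, the only finite-energy, by a cut-off Pohozaev identity) solution is $v \equiv 0$. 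The localization/decay bound \ef{sub1a} for the weighted equation \ef{sCHa} follows by the same scheme with $\l_k$ re-tuned to absorb the factor $|x_k + \l_k y|^\a \to |x_k|^\a$ (and $\to 0$ if $x_k \to 0$), which shifts the critical exponent to $p_*(\a)$.

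The main obstacle is precisely this Liouville step for the fourth-order nonlocal limit problem: unlike the second-order semilinear heat equation, the biharmonic operator lacks a maximum principle, so the classification of bounded entire solutions cannot rely on comparison arguments and must be carried through by the variational/energy identities above, where controlling the non-local term $(-\D)^{-1}v$ and the boundary-at-infinity contributions in the Pohozaev identity (justifying that they vanish, using only $|v| \le 1$ plus interior gradient estimates) is delicate. A secondary technical point is the treatment of the ``boundary'' case $p = p_{\rm S}$, where the Pohozaev identity becomes degenerate and one needs the additional information that the limit is $s$-independent with bounded Dirichlet energy to still conclude triviality. Once the Liouville theorem is in place, uniqueness of the global classical solution follows from standard parabolic well-posedness for \ef{sCH} with bounded data, and \ef{pp1} is exactly the negation of the blow-up assumption.
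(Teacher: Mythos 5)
Your overall strategy (blow-up rescaling around a hypothetical unbounded sequence, compactness, contradiction) is the same as the paper's, but your choice of scaling sends you down a different and, as written, unworkable road. You take $\l_k=M_k^{-(p-1)/4}$ so that the nonlinear term keeps coefficient $1$, and therefore you must classify \emph{all} bounded entire ancient solutions of the full rescaled equation $v_s=-\D^2 v+\D(|v|^{p-1}v)$ with $|v|\le 1$, $|v(0,0)|=1$. That Liouville theorem is the entire content of the result and you do not prove it; worse, the route you sketch cannot work with the information available. From the rescaling you only know $|v|\le 1$ — no finite Dirichlet energy, no $L^{p+1}$ bound — so the gradient-flow/energy-monotonicity argument does not force $s$-independence, and the cut-off Pohozaev identity's boundary terms do not vanish. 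Even the elliptic endpoint of your reduction fails: $-\D v=|v|^{p-1}v$ on $\ren$ admits nontrivial \emph{bounded} sign-changing solutions in the subcritical range (already for $N=1$ every solution of $-v''=|v|^{p-1}v$ is bounded and periodic), and at $p=p_{\rm S}$ there are the positive finite-energy Loewner--Nirenberg profiles \eqref{6.2}. So ``bounded $\Rightarrow$ trivial'' is simply false for the limit problem, and the contradiction never materializes.

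The paper avoids the Liouville theorem altogether, and the missing ingredient in your proposal is exactly its Step I: multiplying the pseudo-parabolic form \eqref{ps1} by $u_t$ in $L^2$ gives the Lyapunov identity \eqref{ps2} and hence the global a priori bounds $\|\n u(t)\|_2^2\le C$, $\int|u(t)|^{p+1}\le C$. The scaling parameter $a_k$ is then chosen to \emph{preserve these bounds} rather than the equation (so $a_k=C_k^{-2/(N-2)}$ or $C_k^{-(p+1)/N}$), with the consequence that the nonlinearity acquires the coefficient $\d_k=C_k^{\g_1}$, $\g_1=p-1-\tfrac{4}{N-2}$, which tends to $0$ precisely when $p<p_*$. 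The limit is then the \emph{linear} bi-harmonic equation \eqref{ww2}, and the contradiction comes from the elementary decay estimate \eqref{ps8} for its convolution representation with data obeying \eqref{ps3} --- no classification of nonlinear entire solutions is needed. The critical case $p=p_*$, where $\d_k\equiv 1$, is handled separately via the gradient structure of the unperturbed rescaled equation (every global orbit tends to the unique equilibrium $0$, which rules out an ancient solution with $\|\bar v(0)\|_\iy=1$). In short: the subcriticality hypothesis \eqref{sub1} enters through the scaling exponents $\g_{1,2}$, not through a Pohozaev identity, and without the a priori energy bounds and the bound-preserving scaling your argument has no way to close.
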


 \noi {\em Proof.} It consists of  four steps.

 \noi{\sc Step I: a priori bounds on smooth solutions}.
 This is a pretty standard step in nonlinear PDE theory. Writing
 \ef{sCH} in the pseudo-parabolic form,
  \be
  \label{ps1}
  (-\D)^{-1} u_t= \D u -  |u|^{p-1} u,
   \ee
   and multiplying by $u_t$ in the metric of $L^2(\ren)$ yields:
    \be
    \label{ps2}
     \tex{
    \frac{\mathrm d}{{\mathrm d}t}\,\Big(\,
      \frac 12 \|\n u\|^2_2 + \frac 1{p+1}\,  \int\limits
      |u|^{p+1}\,\Big)
= - \|u_t\|^2_{H^{-1}} \le 0. }
 \ee
 In particular, this shows that, on smooth solutions, \ef{sCH} is
 a gradient dynamical system admitting a positive definite
 Lyapunov function, so that a number of strong results from this
 area are available (see e.g., \cite{Hale}), though the key
 $L^\iy$-estimate still remains uncertain.

 Integrating \ef{ps2} over an arbitrary interval $(0,T)$ yields the
 following {\em a priori} bounds on smooth solutions:
  \be
  \label{ps3}
   \tex{
   \| \n u(t)\|^2_2 \le C \andA \int
      |u(t)|^{p+1} \le C \quad \mbox{for all} \quad t>0.
      }
       \ee

\ssk

\noi{\sc Step II: proving non-blow-up by scaling. Subcritical
range.}
 Here, we follow \cite{GW2}; see also \cite{GMPKS}.
 Namely,   arguing by contradiction, we  assume that  there exist sequences $\{t_k\}
 \to T^-$, $\{x_k\} \subset \ren$, and $\{C_k\}$ such that
 \be
 \label{seq11}
 \sup_{\ren \times [0,t_k]} \, |u(x,t_k)| = |u(x_k,t_k)| = C_k \to + \infty.
 \ee
 In other words, the solution blows-up in finite time.
 We next perform the change
 \be
 \label{rvarl}
u_k(x,t) \equiv v(x_k +x, t_k+t) = C_k v_k(y ,s) \whereA x = a_k
y, \quad t = a_k^{4}s,
 \ee
and the sequence  $\{a_k\}\to 0$ is chosen in such a manner that
the {\em a priori} estimates \ef{ps3} hold for the sequence
$\{v_k(y,s)\}$. In particular,
$$
   \tex{
   \| \n u(t)\|^2_2 = C_k^2 a_k^{N-2} \int \| \n v(s)\|^2 \andA \int
      |u(t)|^{p+1} =C_k^{p+1} a_k^N \int
      |v(s)|^{p+1}.
      }
 $$
 This gives respectively (both choices eventually lead to the same result):
  \be
  \label{ps4}
  a_k= C_k^{-\frac 2{N-2}} \,\,(N \ge 3) \andA
  a_k=C_k^{-\frac{p+1}{N}}.
   \ee
 Note that, after such a scaling, the rescaled functions
 $v_k(y,s)$ are defined on the intervals
  \be
 \label{ww1}
  \tex{
 \quad s \in \tex{\big[-\frac{t_k}{a_k^{4}},
 \frac{T-t_k}{a_k^{4}}\big)}.
  }
  \ee

 As usual, such a rescaling near blow-up time, in the limit $k \to \iy$, leads to
the so-called  {\em ancient solutions} (i.e., defined for all
$s<0$) in Hamilton's notation \cite{Ham95}. Various scalings  have
been  typical techniques of
   reaction-diffusion  theory for many years; see different forms of its application in
 \cite{SGKM, AMGV}.

  Substituting (\ref{rvarl}) into equation (\ref{sCH}) yields that
$v_k(y,s)$ satisfies a perturbed equation
 \begin{equation}
 \label{reql}
  (v_k) _s =
  -\D^2 v_k + \d_k \D( |v_k|^{p-1}v_k) \quad \mbox{in} \quad \ren \times
  \re,\quad \mbox{where}
  \ee
 \be
 \label{ps6}
  \tex{
 \d_k= C_k^{\g_1}, \,\, \g_1=p-1-\frac{4}{N-2} \andA
  \d_k= C_k^{\g_2}, \,\, \g_2=p-1- \frac{2(p+1)}{N},
   }
   \ee
   respectively.
One can see that
 \be
 \label{ps7}
 \d_k \to 0, \,\,\, \mbox{if} \,\, \g_{1,2}<0 \LongA p<p_*.
  \ee

 We next perform a backward shifting in time technique by
 fixing $s_0>0$ large enough (this is possible in the time-interval in (\ref{ww1})
  since $a_k \to 0$),
  and setting $\bar v_k (s) = v_k
(s-s_0)$. Then, by construction, we have that
 \be
 \label{sc44}
|\bar v_k (s)| \le 1 \andA  \mbox{(\ref{ps3}) for $v_k(y,s)$ hold
on} \,\,\, (0,s_0),
 \ee
 so that $\{\bar v_k(s)\}$ is a family of
uniformly bounded classical solutions of the uniformly parabolic
equation (\ref{reql}) with bounded smooth coefficients. By classic
parabolic regularity theory \cite{EidSys, Fr},
 we have that
 the sequence $\{\bar v_k\}$ is uniformly bounded and  equicontinuous on
 any compact subset of $\ren \times (0,s_0)$. Indeed,
 the necessary uniform gradient bound can be obtained from the
 integral equation of (\ref{reql}), or by other usual regularity methods
 for uniformly parabolic equations.

 Therefore, by the Ascoli--Arzel\'a theorem,
along a certain subsequence, $\bar v_k (s) \to \bar v (s)$
uniformly on compact subsets of $\ren \times (0,s_0)$. Passing
to the limit in equation (\ref{reql}) and using that the scaling
parameter satisfy $\d_k \to 0$,  yields that $\bar v(s)$ is a
bounded weak solution and, hence, a classical solution of the
Cauchy problem for the linear {\em bi-harmonic equation}
 \be
  \label{ww2}
   \tex{
  \bar v_s= -\D^2 \bar v, \quad \mbox{with data} \quad
  |\bar v_0| \le 1, \quad
\|\n \bar v_0\|_2 \le C, \,\,\, \int  |\bar v_0|^{p+1} \le C.
 }
 \ee
  We next represent the solutions as follows:
  \be
  \label{ww3}
   \tex{
   \bar v(s_0)= b(s_0)* \bar v_0 \equiv
    s_0^{-\frac N4} \int\limits_{\ren} F\big(\frac{y-z}{s^{1/4}}\big) \,
    \bar v_0(z)\, {\mathrm d}z
    \equiv  s_0^{-\frac N4} \int\limits_{\ren} (\n)^{-1} F\big(\frac{y-z}{s^{1/4}}\big) \,
   \n \bar v_0\, {\mathrm d}z.
    }
     \ee
 Finally, using the H\"older inequality in the convolution  yields:
 \be
 \label{ps8}
  |\bar v(s_0)|
\le s_0^{-\frac {N-1}4} C \ll 1 \quad (N \ge 2)
 \ee
    for all $s_0 \gg 1$. Hence, the same holds for
$\sup_y|\bar v_k(y,s_0)|$ for $k \gg 1$, from whence comes the
contradiction with the assumption  $\sup_y|v_k(y,s_0)| =1$.
  Thus, $v(x,t)$ does not blow-up and remains bounded for all $t>0$
   (but not uniformly still, as required by
   \ef{pp1}).

\ssk

\noi{\sc Step III: proving non-blow-up by scaling. Critical case.}
For $p=p_*$, we have that $\d_k \equiv 1$, so that \ef{reql} for
$\{v_k\}$ takes the unperturbed form
 \be
 \label{ps10}
  v _s =
  -\D^2 v +  \D(|v|^{p-1}v) \quad \mbox{in} \quad \ren \times
  \re.
  \ee
 Since, as we have seen, \ef{ps10} is a smooth gradient system
 with a monotone operator in $H^{-1}$, so that zero is the only
 equilibrium, we have that,
for any regular enough global solution $v(y,s)$,
 \be
 \label{ps12}
  v(y,s) \to 0 \asA s \to + \iy
  \ee
  uniformly in $\ren$. Then,
  as above,
by passing to the limit $k \to \iy$, we then obtain existence of
an {\em ancient} solution $\bar v(y,s)$ satisfying
 \be
 \label{ps13}
 \bar v(s) \to 0 \asA s \to - \iy \andA \|\bar v(0)\|_\iy =1.
  \ee
  However, such a solution is obviously nonexistent, since
then  \ef{ps10} for $s \ll -1$, where $|\bar v(s)| \ll 1$, becomes
an asymptotically small perturbation of the linear equation
\ef{ww2}, so that the same argument applies.

\ssk

\noi{\sc Step IV: uniform boundedness.} Assuming now that $C_k \to
+\iy$ and $t_k \to +\iy$ and performing the same scaling and
passing to the limit yield the result.

This completes the proof of the theorem. $\qed$

\ssk

\noi{\bf Remark.} For the non-autonomous C--H equation \ef{sCHa},
one can derive similar global {\em a priori} estimates \ef{ps2}.
However, since the translations in $x$ are not allowed now, we can
perform a proof of non-blow-up at the fixed point $x=0$. Then, all
the arguments apply if we set $x_k=0$ (or $x_k \approx 0$ close
enough),  and the only difference is that $\a$ will enter some
exponents, so that we will have
 $$
 \tex{
  a_k= C_k^{- \frac{p+1}{\a+N}}, \quad
  \g_1= p-1-\frac{2(\a+2)}{N-2}, \quad
  \g_2=p-1- \frac{(p+1)(\a+2)}{\a+N}, \quad \mbox{etc.},
  }
  $$
  which will eventually lead to the critical exponent in
  \ef{sub1a}. Indeed, this non-blow-up is a conventional one,
  since \ef{sub1a} does not prevent blow-up at any neighbouring
  points $x \not = 0$, for which the range \ef{sub1} remains
  correct.

 \section{A short discussion on blow-up in the supercritical stable
 model}
  \label{S6}

 Thus, in the supercritical range $p>p_*$, the scaling argument
 establishing uniform $L^\iy$-bounds \ef{pp1} from weaker Sobolev and
 $L^{p+1}$-estimates \ef{ps3} does not apply, and finite time
 blow-up of some solutions becomes plausible.

Consider the non-autonomous model \ef{sCHa}. Then blow-up at $x=0$
is impossible in the range \ef{sub1a}.
 As a first step towards blow-up scenarios (at $x=0$) for \ef{sCHa} with $p > p_*(\a)$, one
 should consider self-similar solutions of the standard form:
  \be
  \label{dd1}
   \tex{
   u_{\rm S}(x,t)=(T-t)^{-\g} f(y), \quad \g= \frac{\a+2}{4(p-1)}, \quad y= x/(T-t)^{\frac 14},
    \quad t<T,
    }
    \ee
  where $T>0$ is the corresponding blow-up time. Then $f$ in \ef{dd1} solves
  the following elliptic equation:
   \be
   \label{dd2}
    \tex{
   - \D^2 f- \frac 14 \, y \cdot \n f- \g f+ \D(|y|^\a
   |f|^{p-1}f)=0 \inB \ren.
    }
    \ee

Thus,
we arrive at a ``stationary" problem \ef{dd2} for blow-up profiles
$f(y)$. This returns us to previous Section \ref{S3}. However, one
can immediately observe that the corresponding steady problem is
not variational, due to the presence of an extra linear
first-order operator, increasing the difficulty of the analysis
and not allowing the techniques used in Section \ref{S3}.

Moreover, proving nonexistence of
any nontrivial solutions of \ef{dd2} in some subrange of
$p>p_*(\a)$ is an important but still difficult open
problem, at least in general.

However, there exist other more involved scenarios of blow-up,
which we now start to develop for the corresponding unstable C--H
equation. For \ef{ch6}, such scenarios are ``more generic" and
easier to implement, though, partially, in a non-rigorous way. We
still do not know whether such scenarios can be applied to the
stable equation \ef{sCHa}.

\section{Two types of blow-up in the unstable C--H equation}
 \label{S7}

 For simplicity, we now again fix $\a=0$ and consider the corresponding
 unstable C--H equation \ef{ch6}.
 The first type of possible blow-up patterns remains the
 same and will be shown as follows.

 \subsection{Self-similar blow-up}

 This occurs according to formulae as in \ef{dd1} with $\a=0$:
  \be
  \label{72}
   \tex{
   u_{\rm S}(x,t)=(T-t)^{-\g} f(y), \quad \g= \frac{1}{2(p-1)}, \quad y= x/(T-t)^{\frac 14},
   \quad t<T,
    }
    \ee
  where the similarity profile $f(y)$  solves
  the  elliptic equation
   \be
   \label{73}
    \tex{
   - \D^2 f- \frac 14 \, y \cdot \n f- \frac{1}{2(p-1)}\, f- \D(
   |f|^{p-1}f)=0 \inB \ren.
    }
    \ee

 Let us discuss properties of blow-up similarity profiles in
 greater detail, which is necessary for future extensions.
  Thus:

    (i) In the critical ``mass Fujita" case \ef{p01}, equation \ef{73}
    admits a countable family of positive blow-up patterns $\{f_k(y)>0\}_{k \ge 0}$,
    with exponential decay as $y \to \infty$.
    In the radial setting, these  are obtained from a third-order ODE
    derived on integration \cite[\S~2-4]{EGW}. This is  a simpler case.
     Moreover,  such Type I blow-up patterns generate Dirac's
 delta as final-time profiles,
 \be
 \label{Di1}
 u(x,T^-) = c_k \d(x) \inB \ren \quad (c_k>0).
  \ee

\ssk

(ii) There exists another critical dipole exponent, e.g., $p_1=2$
for $N=1$ (or $p_1= 1+ \frac 2{N+1}$), where the ODE for $N=1$
again reduces to the third order and admits extended study
\cite[\S~5]{EGW}.

\ssk


(iii) For general $p>1$, $p \not = p_0, \, p_1$, \ef{73} is truly a
fourth-order elliptic equation or an ODE in the radial setting.
Then construction of proper blow-up profiles $f(y)$ requires
taking into account non-exponentially decaying asymptotic bundles.
E.g., for $N=1$, this means considering a 2D bundle of the form
  \be
   \label{f23}
 \mbox{$
 f(y) = Ay^{-\frac 2{p-1}}+...+ C y^{-\frac 13} {\mathrm e}^{-a_0
 y^{4/3}}+... \quad \mbox{as} \,\,\, y \to + \infty,
 \quad a_0=3 \cdot 2^{-\frac 83},
 $}
  \ee
 where $A$ and $C$ are arbitrary parameters.
 In Appendix A,  we discuss a 2D shooting strategy, which requires
 constructing a solution of \ef{73} for $p \not = p_0$ by using both parameters $A$ and $C$ in \ef{f23}.

In \ef{f23}, the last term is responsible for exponentially
decaying functions, while the first one gives an algebraic decay.
 It follows from \ef{f23} that, for any $A \neq 0$,
  \be
  \label{f67}
  f \in L^1(\re) \forA p \in (1,p_0) \andA f \not\in L^1(\re) \forA p
  >p_0.
   \ee
Since the mass evolution of the similarity solutions \ef{72} is
given by
 \be
 \label{f98}
  \tex{
   \int\limits_{\ren} u_{\rm S}(x,t)\, {\mathrm
   d}x=(T-t)^{\frac{N(p-p_0)}{4(p-1)}} \, \int\limits_{\ren}
   f(y)\,{\mathrm d}y,
   }
   \ee
   the mass conservation implies that
 \be
 \label{f99}
  \tex{
   \int f(y) =0 \quad \mbox{for all} \quad p \in(1,p_0).
    }
    \ee

Thus, the following holds:
 $$
  \tex{
 A=0 \,\,\, \mbox{in (\ref{f23})},\quad \mbox{iff} \quad p=p_0= 1 + \frac 2N,
 }
 $$
 i.e., in other cases, $f(y)$ cannot in general have exponential
 decay.
 This  changes the blow-up asymptotics for $p \not = p_0$:  \ef{72},
 \ef{f23} imply  the limit $t \to T^-$
  \be
  \label{f24}
  u(x,T^-)= A |x|^{- \frac 2{p-1}}, \quad A \ne 0, \quad \mbox{so
  that}
  \ee
   \be
   \label{f25}
   u(x,T^-) \in L^1_{\rm loc}(\ren), \quad \mbox{if} \quad p>p_0;
   \quad
   u(x,T^-) \not\in L^1_{\rm loc}(\ren), \quad \mbox{if} \quad
   p<p_0.
    \ee
 We refer to \cite{EGW}, where most of the results are obtained for
 $N=1$. In Figure \ref{FF1}, we present numerical results showing
 how the blow-up similarity profiles $f(y)$ changes sign for $p <p_0$,
 so that we expect that
  \be
  \label{AA1}
  A(p)<0 \forA p\in (1,p_0), \quad A(p_0)=0, \andA A(p)>0 \forA
  p>p_0.
  \ee

\begin{figure}[!htb]
\begin{center}
\includegraphics[width=10cm]{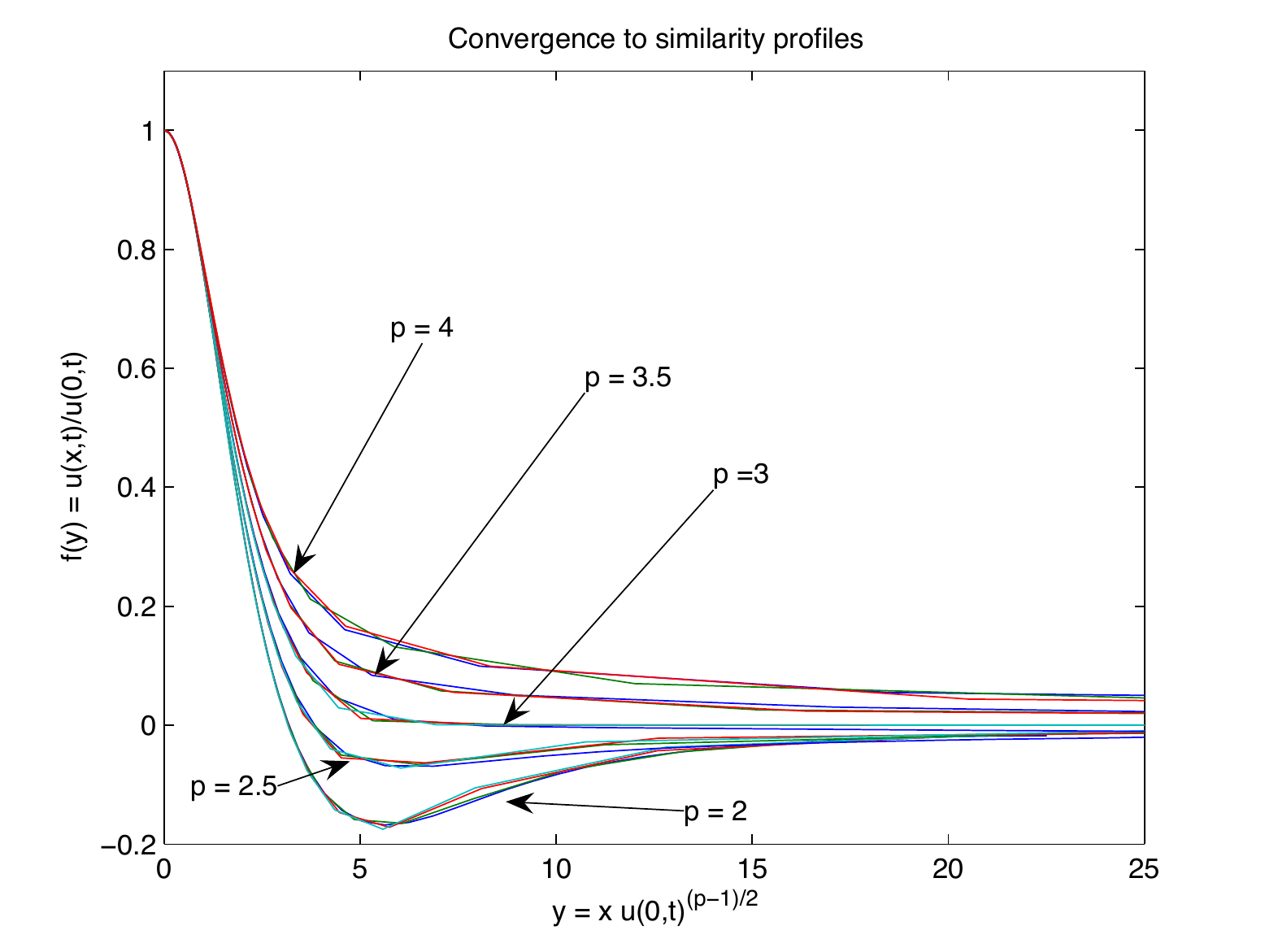}
\caption{Similarity profiles satisfying \ef{73} for $N=1$ obtained
via numerical solving  the PDE \ef{ch6} and scaling,
\cite[\S~5]{EGW}.}
 \label{FF1}
\end{center}
\end{figure}

  Main ideas and shooting techniques of construction admit  natural extensions to higher
 dimensions, where mathematical justifications become much more
 difficult, especially in the critical and supercritical Sobolev
 range
  \be
  \label{75}
   \tex{
    p \ge p_{\rm S}= \frac {N+2}{(N-2)_+}.
    }
     \ee
In particular, it can be expected that these $p$-branches of
similarity profiles may blow-up as $p \to p_{\rm S}^-$, without
proper extension beyond. Therefore, for $p \ge p_{\rm S}$, another
scenario of blow-up is necessary, to be introduced later on.

\subsection{A branching approach to blow-up similarity profiles}
 \label{S.6.2}

Indeed, the problem \ef{73} is not variational. However, it can be
viewed as a perturbation of a variational one, which we have dealt
with above. Let us introduce the following family of operators:
\be
   \label{73N}
    \tex{
    {\bf A}_\mu f \equiv
   - \D^2 f- \mu \, y \cdot \n f- \frac{1}{2(p-1)}\, f- \D(
   |f|^{p-1}f)=0 \inB \ren,
    }
    \ee
     where $\mu \in [0, \frac 14]$ is a parameter. Indeed, for
     $\mu=0$, we arrive at the variational  problem \ef{fb1} in $\ren$, with
     (this is key for existence!)
      \be
      \label{bbb1}
       \tex{
       \g=- \frac 1{2(p-1)}<0 \quad (\mbox{cf. Section \ref{S.RN}}).
       }
       \ee
Therefore, we expect that there exists a {\em branching} of
solutions of \ef{73N} from critical L--S points at $\mu=0$. This
can be established in a reasonably standard way (see related
examples in \cite{GMPSob, GMPSob2}). This can also then guarantee
existence of an arbitrarily large number of solutions (if the L--S
family for $\mu=0$ is countable) at least for sufficiently small
$\mu>0$. The principle difficulty is then a  extension of these
solutions up to the necessary value $\mu= \frac 14$ dictated by
the similarity blow-up equation \ef{73}. Such a global extension
problem remains open and represents a fundamental problem in
nonlinear non-potential operator theory. Numerical methods even
for the case $N=1$ (or the radial one in  $\ren$) are also rather
delicate in the present case. We will treat such a problem in
\cite{CHPabloRN}.



 \subsection{Remark on similarity extension beyond blow-up: towards  Leray's
scenario (1934)}
  Let us point out another important feature of
self-similar blow-up under the presence of the mass conservation.
 Namely, self-similar blow-up such as \ef{72} admits
 self-similar extensions beyond via {\em global} similarity solutions
 \be
  \label{76}
   \tex{
   u^+_{\rm S}(x,t)=(t-T)^{-\g} F(y), \quad \g= \frac{1}{2(p-1)}, \quad y= x/(t-T)^{\frac 14},
   \quad t>T,
    }
    \ee
  where the similarity profile $F$
  now solves a slightly different
     elliptic equation
   \be
   \label{77}
    \tex{
   - \D^2  F+ \frac 14 \, y \cdot \n F + \g F- \D(
   |F|^{p-1}F)=0 \inB \ren.
    }
    \ee
 Then, for ``initial data" \ef{Di1}, we look for solutions
 of \ef{77} with exponential decay, while, for that in \ef{f24}, one
 needs $F(y)$ with the {\em same} constant $A \ne 0$, as inherited
 from blow-up evolution via $f(y)$ with the asymptotics \ef{f23}.

It is key that the ODE \ef{77} in the radial setting has another
3D bundle at infinity, e.g., for $N=1$,
 \be
   \label{f23N}
 \mbox{$
 F(y) = Ay^{-\frac 2{p-1}}+...+  y^{-\frac 13} {\mathrm e}^{-\frac
 {a_0}2
 y^{4/3}}\big[B \cos\big( \frac{a_0 \sqrt 3}2 y^{\frac 43}\big)+
 C\sin\big( \frac{a_0 \sqrt 3}2 y^{\frac 43}\big) \big]+...\,,
  $}
   \ee
where $A \ne 0$ is fixed by the pre-history as $t \to T^-$, but
two parameters $B$ and $C$ are left to shoot necessary two
symmetry conditions at the origin
 $$
 F'(0)=F'''(0)=0.
 $$

Note that, in general, such a construction beyond blow-up can
assume existence of a certain ``mass defect", when the parameters
for $t<T$ and $t>T$ {\em are not} entirely consistent; see
\cite[\S~4]{EGW} and \cite{GalJMP} for the case $p=p_0$. Indeed,
solvability of the above shooting/matching problem on construction
of suitable {\em extension pairs} $\{f(y),F(y)\}$ for blow-up
solutions of the C--H equation must be accompanied by advanced
numerical methods. For the fourth-order RD equation in 1D (a
non-conservative model)
 \be
 \label{RD4}
 u_t= -u_{xxxx} + |u|^{p-1}u \inB \re \times \re,
  \ee
for such a blow-up/extension study, see \cite{BGW1, Gal5types,
GalBlExt}.

\ssk

     This construction can be connected with Leray's scenario of self-similar blow-up/extension
 proposed in 1934 for the Navier--Stokes equations in $\re^3$,
 \cite[p.~245]{Ler34}.

\ssk

     Thus, we next turn our attention to another new type of
     blow-up for the unstable C--H equation \ef{ch6}, which is directly associated with its {\em
     stationary} solutions.

  \subsection{``Quasi-stationary" {\bf Type II(LN)} blow-up in the critical case}

We consider the unstable equation \ef{ch6} in the {\em critical
Sobolev case}
 \be
 \label{76S}
  \tex{
  p=p_{\rm S}= \frac{N+2}{N-2} \forA N \ge 3.
  }
  \ee
Then, the stationary equation
 \be
 \label{6.1}
  \tex{
 \D_\xi W + W^p=0, \quad \xi \in \ren, \,\,\, W(0)=d>0 \quad \big( p=p_{\rm
 S}=\frac{N+2}{N-2}\big)
 }
  \ee
 is known to admit a 1D family of
classic Loewner--Nirenberg (L--N) conformally invariant exact
solutions \cite{LN74} (1974).  The corresponding symmetries of
\ef{6.1} were earlier detected by
  Ibragimov in 1968 \cite{Ib68}. These solutions are given explicitly by
 \be
 \label{6.2}
  \tex{
  W_0(\xi) = d\, \Big[ \frac{N(N-2)}{N(N-2)+ d^{4/(N-2)}|\xi|^2}
  \Big]^{\frac{N-2}2}>0 \inB \ren, \quad d>0,
  }
   \ee
 and exhibit a number of uniqueness and other exceptional
 properties concerning  equation \ef{6.1}; see \cite[\S~6]{Gal5types}
 for extra details and references.

The idea of such {\bf Type II(LN)} blow-up patterns (according to
a classification in \cite{Gal5types}) consists of noting that
blow-up can occur via some ``slow" motion about the stationary
solutions \ef{6.2}. This formally means the following
non-stationary parameter time-dependence:
 \be
 \label{79}
d=d(t) \to + \iy \asA t \to T^-.
 \ee
 For the standard second-order reaction-diffusion equation
  \be
  \label{80}
  u_t=\D u+ |u|^{p-1}u, \quad p=p_{\rm S},
   \ee
 such a blow-up scenario was  proposed in \cite{Fil00}.


Construction of such Type II blow-up patterns of non-self-similar
kind consists of few steps.

\ssk

\noi{\sc Step I: standard rescaling}.
 We first  use standard full similarity scaling in \ef{ch6}:
 \be
 \label{3.1}
  \tex{
 u(x,t)=(T-t)^{-\frac 1{2(p-1)}}v(y,\t), \quad y= \frac
 x{(T-t)^{1/4}}, \quad \t = - \ln(T-t) \to +\iy, \,\, t \to T^-.
}
  \ee
 Then $v(y,\t)$ solves the following rescaled parabolic equation:
  \be
  \label{3.2}
   \tex{
  v_\t= {\bf A}(v) \equiv - \D^2 v - \frac 14\, y \cdot \n v-\frac 1{2(p-1)}\, v
  - \D(
  |v|^{p-1}v)\inB \ren \times (\t_0,\iy),
  }
   \ee
   where $ \t_0=-\ln T$, and ${\bf A}$ is the stationary elliptic operator in
   \ef{73}, so that similarity profiles  are just
   stationary solutions of \ef{3.2}.

\ssk

\noi{\sc Step II: spectral theory and generalized Hermite
polynomials}. We need a detailed knowledge of spectral properties
of the non-self-adjoint linear operator, which appeared in
\ef{3.2}, i.e., for
  \be
  \label{4.2}
  \tex{
  \BB^*=-\D^2 - \frac 1{4}\, y \cdot \n \inB L^2_{\rho^*}(\ren),
  \,\,\,\rho^*(y)={\mathrm e}^{-a|y|^{4/3}}, \quad a \in \big(0,3 \cdot
 2^{- \frac {8}3}\big).
  }
  \ee
  This ``adjoint"
    Hermite operator has a number of good spectral
  properties \cite{Eg4}:

  \begin{lemma}
\label{lemSpec2} 
  $\BB^*: H^{4}_{\rho^*}(\ren) \to
L^2_{\rho^*}(\ren)$ is a bounded linear operator with the spectrum
 \be
\label{SpecN}
 \tex{
 \s(\BB^*) = \{ \l_\b =  -\frac{|\b|}4, \,\,\,
|\b|=0,1,2,...\} \quad \big(= \s(\BB), \,\, \BB=-\D^2 + \frac
1{4}\, y \cdot \n+ \frac N4 \, I \big). }
   \ee
     Eigenfunctions
$\psi_\b^*(y)$ are $|\b|$th-order generalized Hermite polynomials:
\be
\label{psidec}
 \tex{
\psi_\b^*(y) =  \frac {1}{ \sqrt{\b !}}\big[ y^\b +
\sum_{j=1}^{[|\b|/4]} \frac 1{j !}(\Delta)^{2 j} y^\b \big],
 \quad |\b|=0,1,2,...\, ,
 }
  \ee
and the subset $\{\psi_\beta^*\}$ is complete
 in
$L^2_{\rho^*}(\ren)$.

\end{lemma}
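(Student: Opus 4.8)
\noi {\em Proof (plan; following \cite{Eg4}).} The strategy is to use that $\BB^*$ is the formal $L^2(\ren)$–adjoint of the ``direct'' rescaled bi-harmonic operator $\BB=-\D^2 + \frac14\, y\cdot\n + \frac N4\, I$ — the generator, up to sign and shift, of the semigroup of $u_t=-\D^2 u$ in self-similar variables, whose kernel is the profile $F$ with the oscillatory exponential decay ${\mathrm e}^{-a_0|y|^{4/3}}$, $a_0=3\cdot 2^{-8/3}$, appearing in \ef{f23} — and to proceed in three steps. First I would fix the functional framework: equip $H^4_{\rho^*}(\ren)$ with the natural graph-type norm adapted to $\BB^*$ (equivalently, with moment weights $\langle y\rangle^{s}$ on the derivatives of order $\le 3$, tuned so that the first-order term $\frac14\, y\cdot\n u$ lies in $\LLL$). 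Then $\D^2 u\in\LLL$ trivially and $y\cdot\n u\in\LLL$ by construction, so $\BB^*:H^4_{\rho^*}(\ren)\to\LLL$ is bounded; and since the reciprocal weight $1/\rho^*(y)={\mathrm e}^{a|y|^{4/3}}\to\iy$ as $|y|\to\iy$, the embedding $H^4_{\rho^*}(\ren)\hookrightarrow\LLL$ is compact (a Rellich-type statement: the growing weight suppresses the tails), whence $\BB^*$ has compact resolvent and its spectrum consists of isolated eigenvalues of finite algebraic multiplicity only, with no finite accumulation point.

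Next I would compute the eigenvalues and eigenfunctions by a degree-grading (triangular) argument. The Euler operator $-\frac14\, y\cdot\n$ multiplies a homogeneous polynomial of degree $k$ by $-k/4$, while $-\D^2$ lowers the degree by $4$; hence on the finite-dimensional space $\mc{P}_m$ of polynomials of degree $\le m$ the operator $\BB^*$ is triangular with respect to the degree grading, with diagonal $\{-k/4:\,0\le k\le m\}$. For $\l_\b=-|\b|/4$ I would seek $\psi=\sum_{j\ge0}c_j(\D)^{2j}y^\b$ with $c_0=1$; from $(\BB^*+\frac{|\b|}4)\big((\D)^{2j}y^\b\big)=-(\D)^{2(j+1)}y^\b+j\,(\D)^{2j}y^\b$ one gets the recursion $j\,c_j=c_{j-1}$, i.e. $c_j=1/j!$, and the sum stops at $j=[|\b|/4]$ because $(\D)^{2j}y^\b=0$ for $4j>|\b|$. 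This is exactly \ef{psidec}, the prefactor $1/\sqrt{\b!}$ being a normalization. Since the change of basis $\{y^\b\}\mapsto\{\psi_\b^*\}$ is unit upper-triangular in the grading, it is invertible, so $\{\psi_\b^*:|\b|=k\}$ spans the polynomial part of the eigenspace at $-k/4$ and the linear span of $\{\psi_\b^*\}$ equals the whole space of polynomials; this yields the list \ef{SpecN}.

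For completeness, since $\frac43>1$, the measure $\rho^*(y)\,{\mathrm d}y$ has finite exponential moments, $\int{\mathrm e}^{\eps|y|}\rho^*\,{\mathrm d}y<\iy$ for all $\eps>0$, hence it is determinate and polynomials are dense in $\LLL$ (a Carleman/Hall-type criterion); combined with the previous step this gives completeness of $\{\psi_\b^*\}$ in $\LLL$. To exclude eigenvalues outside $\{-|\b|/4\}$ I would use the biorthogonal family $\{\psi_\b\}$ of eigenfunctions of the adjoint operator $\BB$ (with the same eigenvalues, decaying like the kernel $F$), which is complete in the dual space $L^2_{1/\rho^*}(\ren)$: if $\phi\in\LLL$ satisfied $\BB^*\phi=\mu\phi$ with $\mu\notin\{-|\b|/4\}$, then $\mu\langle\phi,\psi_\b\rangle=\langle\BB^*\phi,\psi_\b\rangle=\langle\phi,\BB\psi_\b\rangle=-\frac{|\b|}4\langle\phi,\psi_\b\rangle$ for every $\b$, forcing $\langle\phi,\psi_\b\rangle=0$ and hence $\phi\equiv0$, a contradiction. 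This pins down $\s(\BB^*)$, and the parenthetical identity $\s(\BB^*)=\s(\BB)$ follows from the adjoint relation.

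\emph{Main obstacle.} The real difficulty is analytic, not algebraic: one has to choose $H^4_{\rho^*}(\ren)$ sharply enough that $\BB^*$ is \emph{simultaneously} bounded on it and possesses a compact resolvent, while the weight $\rho^*={\mathrm e}^{-a|y|^{4/3}}$ is \emph{slow} — the exponent $4/3<2$ is dictated by the bi-harmonic kernel and makes the familiar Gaussian-weight Hermite machinery inapplicable — and one must establish density of polynomials in $\LLL$ precisely at this borderline decay. The non-self-adjointness is a secondary nuisance: $\{\psi_\b^*\}$ is not orthogonal, so ``completeness'' is meant in the total/minimal sense and the absence of extra spectrum has to be read off the biorthogonal family $\{\psi_\b\}$ rather than from an orthonormal expansion. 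All the required weighted estimates, together with the construction of $\{\psi_\b\}$, are carried out in \cite{Eg4}, on which the statement relies.
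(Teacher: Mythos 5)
The paper gives no proof of this lemma at all — it is quoted verbatim from \cite{Eg4} — so the only comparison possible is with that reference, whose approach your sketch faithfully reproduces: the triangular action on the polynomial grading, the recursion $jc_j=c_{j-1}$ yielding \eqref{psidec} (I checked the algebra, e.g. $\mathbf{B}^*(y^4+24)=-(y^4+24)$ for $N=1$), density of polynomials in $L^2_{\rho^*}$ from the exponential moments of ${\mathrm e}^{-a|y|^{4/3}}$, and exclusion of extra spectrum via the biorthogonal family $\{\psi_\b\}$. Your plan is correct, and it rightly identifies (and defers to \cite{Eg4}) the genuinely hard analytic ingredients — the compact weighted embedding giving discreteness and the totality of $\{\psi_\b\}$ — which the paper also takes on faith.
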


As usual, if $\{\psi_\b\}$ is the adjoint basis of eigenfunctions
of the adjoint (in the dual metric of $L^2$) operator
 \be
 \label{BBB}
 \tex{
 \BB= - \D^2 + \frac 14\, y \cdot \n + \frac N4\, I \inB
 L^2_\rho(\ren), \quad \rho= \frac 1{\rho^*},
 }
\ee
   with the same spectrum \ef{SpecN}, the bi-orthonormality
condition holds in $L^2(\ren)$:
 \be
 \label{Ortog}
\langle \psi_\mu, \psi_\nu^* \rangle = \d_{\mu\nu} \quad \mbox{for
any} \quad \mu, \,\, \nu.
 \ee
We recall that the generating formula for eigenfunctions $\psi_\b$
is as follows \cite{Eg4}:
 \be
 \label{eig1}
  \tex{
   \psi_\b(y)= \frac {(-1)^{|\b|}}{\sqrt{\b !}}\, D^\b F(y),
   }
   \ee
 where $F(y)$ is the rescaled kernel of the fundamental solution
 of the bi-harmonic equation
   \be
   \label{bi1}
  u_t= - \D^2 u \inB \ren \times \re_+, \quad \mbox{so that}
    \ee
  \be
  \label{eig2}
   \tex{
  b(x,t)= t^{-\frac N4} F(y), \quad y= \frac x{t^{1/4}},
  \quad \mbox{and} \quad \BB F=0, \quad \int F=1.
   }
   \ee

\noi{\sc Step III: formal construction of Type II(LN) blow-up
patterns for $p=p_{\rm S}$}.
Let $v(y,\t)$ be the rescaled solution of \ef{3.2} in, say, radial
geometry at the moment (which is not essential, since the patterns
can be essentially non-radial). This is the  precise specification
of the class of blow-up patterns we are dealing with: we assume
that $v(y,\t)$ behaves for $\t \gg 1$
 closely to the stationary manifold composed of the explicit  equilibria
\ef{6.2}, i.e., for some unknown function  $\var(\t)$, the
``singular part" of the solution takes the form
 \be
 \label{sc12}
  v(y,\t) = \var(\t) W_0\big(\var^{\frac{p-1}2}(\t)y\big)+...\, ,
   \quad \mbox{where} \quad \var(\t) \to +\iy \asA \t \to + \iy.
 \ee
 Such a representation \ef{sc12} of $v(y,\t)$ is then assumed to be uniformly
 valid as $\t \to +\iy$
on any compact subsets in the new variable
$\zeta=\var^{\frac{p-1}2}(\t)y$.

Next,  it  follows that, on the solutions \ef{sc12} in terms of
the original rescaled variable $y$
 \be
 \label{sc13}
  \tex{
 |v(y,\t)|^{p-1}v(y,\t) \to \frac{e_N}{\var(\t)} \,
 \d(y) \quad \mbox{as\,\, $\t \to +\iy$}
 }
  \ee
  in the sense of distributions, where $e_N>0$ is the constant
 \be
 \label{e1}
  \tex{
  e_N= \int\limits_{\ren} W_0^p(\zeta)\, {\mathrm d} \zeta.
   }
   \ee

 Therefore, on this manifold of solutions, the rescaled equation \ef{3.2}
  takes asymptotically the form
  \be
  \label{tt127}
   \tex{
 v_\t= \AAA(v) \equiv -\D^2 v- \frac 14\, y\cdot \n v - \frac
  {N-2}{8}\, v -\frac{e_N}{ \var(\t)} \, \D
 \d(y) +...\forA  \t \gg 1.
 }
  \ee

 Using Lemma \ref{lemSpec2}, we are looking for Type II
 patterns
 of the simplest eigenfunction expansion over generalized Hermite polynomials, i.e., assuming that
  \be
  \label{gg4}
  \tex{
  v_\b(y,\t)= c_\b(\t) \psi_\b^*(y) +... \asA \t \to + \iy.
   }
   \ee
 Actually, this means looking for a kind of stable (or centre)
 subspace behaviour for the equation \ef{3.2}. Then, as usual,
 substituting \ef{gg4} into \ef{tt127} and multiplying by the
 adjoint eigenfunction $\psi_\b$ (recalling the bi-orthonormality
 \ef{Ortog}) yield the following asymptotic equation for
 the expansion coefficient:
  \be
  \label{gg4N}
   \tex{
   \dot c_\b=-\a_\b c_\b +
  \frac { h_\b}{\var(\t)}+...\,,
  }
  \ee
  where  $\a_\b= \frac {2|\b|+N-2}8>0$.
  The crucial coefficient $h_\b$ is calculated as
  follows:
 \be
 \label{h11}
  \tex{
  h_\b=- e_N \langle \D \d, \psi_\b \rangle =
  - e_N \langle  \d, \D \psi_\b \rangle= - e_N (\D \psi_\b)(0)
  \not = 0.
   }
   \ee
 According to \ef{eig1}, the non-vanishing condition \ef{h11}
 imposes essential restrictions on the multi-indexes $\b$ and admissible
 eigenfunctions
 $\psi_\b(y)$ in \ef{eig1},
 for which such blow-up patterns can actually exist. Note that,
 according to \ef{eig1}, the Laplacian in \ef{h11}
  $$
   \tex{
   \D \psi_\b \equiv \frac{(-1)^{|\b|}}{\sqrt{\b !}}\, \big(
    D^2_{y_1 y_1} D^\b F+ D^2_{y_2 y_2} D^\b F +...+ D^2_{y_N y_N} D^\b
    F\big)
    }
    $$
    is indeed a linear combinations of $N$ other eigenfunctions
    of $\BB$. The non-vanishing condition in \ef{h11}
    requires that all the components of the corresponding
    multiindex $\b$ should be even.


 Let us return to the crucial ``dynamical system" \ef{gg4N}, where we show the leading
equation accompanying infinitely many others corresponding to
further stable subspaces. The asymptotic equation contains two
unknowns, the actual coefficient $c_\b(\t)$ and the corresponding
scaling function $\var= \var_\b(\t)$ from \ef{gg4}. Obviously, the
key issue now is to establish an asymptotic relation between them
for $\t \gg 1$, which will allow a proper ``balance" that is
necessary for existence of such a blow-up pattern. On one hand,
this looks like a standard procedure by assuming that the singular
component in \ef{gg4} actually determines the evolution of the
expansion coefficient $c_\b(\t)$. Under this hypothesis, we then
have, in a standard manner, by using \ef{eig1}, for $\t \gg 1$,
 \be
 \label{43}
 \begin{matrix}
  c_\b(\t) \sim \langle \var(\t)W_0(\var^{\frac{p-1}2}(\t) y),
  \psi_\b \rangle =\var(\t) \frac{(-1)^{|\b|}}{\sqrt{ \b !}} \, \langle W_0(\var^{\frac{p-1}2}(\t) y),
  D^\b F \rangle \qquad\qquad
 \ssk\ssk \\
   =
  \var(\t) \frac{1}{\sqrt{ \b !}} \, \langle D^\b_y W_0(\var^{\frac{p-1}2}(\t) y),
   F(y) \rangle. \qquad\qquad
   \end{matrix}
   \ee
 Finally, changing the variable in the last integral by setting $z=\var^{\frac{p-1}2}(\t) y$,
  we arrive at
 the following integral relation:
  \be
  \label{44}
   \tex{
   c_\b(\t) \sim [\var(\t)]^{1+\frac {(p-1)(|\b|-N)}2}
   \int\limits_{\ren} D^\b_z W_0(z) \, F(z
   \var^{-\frac{p-1}2}(\t))\, {\mathrm d}z.
 }
  \ee
 However, resolving uncertainties in \ef{44} as $\t \to +\iy$ is
 not that easy. Moreover, it is not  clear that the projection integral operator onto
 the eigenspace in \ef{44}  gives a correct link between these
 two functions, since, in some cases, extra integrals over subsets in the rescaled
 variables of {\em Outer Regions} (which we do not study here) should be taken into account.
  Recall that even in the second-order case of the
 RD equation \ef{80}, where standard Hermite polynomials and classic spectral theory
 occur, a sufficiently sharp obtaining of all the time factors
 is not always possible, \cite{Fil00}, especially in higher
 dimensions $N \ge 7$, with no results obtained at all.

Therefore, instead of dealing with a singular integral such as in
\ef{44}, we apply another, simpler, but more qualitative and rough
(but sufficient for our goals) method of ``balancing" the
expansions, which, in some cases of blow-up reaction-diffusion
theory, led to rigorous results; cf. various examples in
\cite[Ch.~4-11]{AMGV}. Since, for higher-order parabolic
equations, we do not have any chance of getting more justified
formal expansions, we are allowed to concentrate on a principal
issue of balancing the asymptotic expansion, without trying to
perform further matching of \ef{sc12} with outer regions. This can
be very difficult even for the second-order case \cite{Fil00},
where some parameter ranges require further analysis and even new
ideas.

\ssk

Thus, as usual in blow-up approaches, existence of such a blow-up
pattern requires a certain balance of the two leading terms on the
right-hand side of \ef{gg4N}, i.e., one needs
 \be
 \label{41}
 \tex{
 \a_\b c_\b(\t)  \sim
  \frac { h_\b}{\var(\t)} \LongA  c_\b(\t) \sim \frac{ h_\b
  }
  {\a_\b\var(\t)} \forA \t \gg 1,
  }
  \ee
 where the sign ``$\sim$" assumes omitting other multipliers of
 slower behaviour. Overall, this means that we can use the
 following {\em ansatze} for the expansion coefficient:
  \be
  \label{45}
   \tex{
    c_\b(\t) = \frac {\kappa(\t)}{\var(\t)},
    }
    \ee
where $\kappa(\t)$ is a slow varying function as $\t \to +\iy$ in
comparison with $\var(\t)$. Substituting \ef{45} into
 \ef{gg4N} yields
  \be
  \label{46}
   \tex{
    \frac {\dot{\kappa}}{\var} - \kappa \frac{\dot{\var}}{\var^2}=
    - \a_\b \frac{\kappa}{\var} + \frac{h_\b}{\var}+... \,.
     }
     \ee
This gives the only $\kappa$-independent balance:
 \be
 \label{47}
  \tex{
- \kappa \frac{\dot{\var}}{\var^2} \sim
    - \a_\b \frac{\kappa}{\var} \LongA \frac{\dot{\var}}{\var}
    \sim \a_\b
 \LongA \var=\var_\b(\t) \sim {\mathrm e}^{\a_\b \t}.}
  \ee
 The slow varying function $\kappa(\t)$ cannot be determined from
such a simple matching and requires further difficult asymptotic
analysis of projections like \ef{44} or other approaches.


Thus, up to slower scaling factors, we get a countable family of
such patterns with
 \be
 \label{gg5}
  \var_\b(\t) \sim {\mathrm e}^{\a_\b \t}+... \andA c_\b(\t) \sim {\mathrm
  e}^{-\a_\b \t}+... \forA \t \gg 1, \quad |\b| \ge 0.
  \ee

The expansion \ef{gg4} actually assumes dealing with a 1D
eigenspace, which is the case for $|\b|=0$ only, where $\l_0=0$ is
simple. For any $k=|\b| \ge 1$, a more general, than \ef{gg4}
eigenfunction expansion should be taken into account:
 \be
  \label{gg4NN}
  \tex{
  v_\b(y,\t)= \sum_{|\b|=k} c_\b(\t) \psi_\b^*(y) +...\, ,}
  \ee
 which leads to more difficult dynamical systems for the
 coefficients $\{c_\b(\t)\}_{|\b|=k}$ (to say nothing of the multiple projection
 integrals, which replace
 \ef{44}), but eventually can induce
 more exiting Type II blow-up patterns.

 Overall, bearing in mind the scaling in \ef{sc12}, this yields
 a possibility of constructing
  a countable family of distinct complicated
Type II blow-up structures, where most of them are not radially
symmetric. To reveal the actual space-time and changing sign
structures of such Type II patterns, special matching procedures
apply. In \cite{Fil00}, this analysis has been performed in the
radial geometry for \ef{80}, though
 still no rigorous justification of the existence of
such blow-up scenarios in $\ren$ was achieved. In  \cite{Nai07},
existence of related radial nonnegative blow-up patterns was
encouraged by putting zero Dirichlet data on the boundary of a
shrinking ball. This boundary constraint indeed essentially
simplifies the problem in comparison with those in $\ren$.

Thus,  the first Fourier coefficient in \ef{gg4} or general
expansions on multi-dimensional eigenspaces
 imply a complicated structure of the pattern about the formed
Dirac's  $\d(y)$ according to \ef{sc13}. However, since these
expansions are given by generalized Hermite polynomials
$\{\psi_\b^*\}$, this matching is expected not to impose more
difficulties as those in \cite[\S~4]{Gal5types}.

As a related extension issue beyond blow-up, let us note that the
singular part \ef{sc12}, with the factors \ef{gg5}, creates at
$t=T$ a very weak singularity such that, in the sense of
distributions, its singular part is as follows:
 \be
 \label{99}
  \tex{
  u(x,T) \sim (\d(x))^{\g} \whereA \g= \frac 2{N(p-1)}=\frac{N-2}{2N}< \frac 1{p-1}<1.
  }
  \ee
 Therefore, for such Type II blow-up patterns, by classic parabolic theory,
 for equation \ef{ch6} with such a ``weak singular" data \ef{99} at $t=T$,
  \be
  \label{100}
  \mbox{there exists a unique
 continuation of such blow-up solutions for $t>T$}
   \ee
  locally
 in time ({\em incomplete blow-up}), and such solutions are bounded and classical therein.

Finally, we again comment on the fact that regular {\em stationary
solutions} are key for existence of such Type II blow-up patterns.

 \section{Extra Type II blow-up patterns for the unstable C--H equation:
  linearization about singular steady state and matching}
 \label{S.10}


 Such new Type II blow-up patterns for the semilinear heat equation
 \ef{80} were constructed earlier in \cite{HVsup}; see  \cite{Miz07}
 for extra details.
 We apply this method to the higher-order equation \ef{ch6},
 which will require completely different spectral theory and
 related mathematical tools of matching.

\subsection{Singular stationary solution (SSS)}

Consider the stationary equation in \ef{6.1} in the range
 \be
 \label{5.3}
  \tex{
  p>p_N= \frac N{N-2} \forA N \ge 3.}
   \ee
   Then, as is well known, there exists
  the explicit radial {\em singular steady state} (SSS) of the standard scaling invariant form
   \be
   \label{5.2}
    \tex{
    U(y)=
C_* \, |y|^{-\mu} \whereA \mu= \frac 2{p-1}, \quad
  C_*= D^{\frac 1{p-1}}, \quad
  D= \frac 2{p-1}\big(N-2- \frac 2{p-1}\big)>0.
  }
   \ee

\subsection{Linearization in Inner Region I: discrete
spectrum via  Hardy's inequality}

 We next perform linearization in \ef{3.2} about the SSS by setting:
 \be
 \label{5.4}
  \tex{
v=U+Y \LongA Y_\t= \hat \BB^* Y + \DD(Y), }
 \ee
 where $\DD(Y)$ is a quadratic perturbation as $Y \to
 0$ and
  \be
  \label{5.5}
   \tex{
  \hat \BB^*= \HH^* - \frac 14\, y \cdot \n - \frac 1{2(p-1)} \, I,
  \quad
  \HH^*= -\D^2 - c\D \big({\frac 1{|y|^2}\, I}\big), \,\,\, c={p D}.
   }
   \ee
 Here, $\HH^*$ contains the main singular terms at the origin
 $y=0$.
Similar to Lemma \ref{lemSpec2}, the operator $\hat \BB^*$ at
infinity admits a proper functional setting in the same metric of
$L^2_{\rho^*}$. However, it is also singular at the origin $y=0$,
where its setting depends on the principal part $\HH^*$.

\begin{proposition}
\label{Pr.Har1}
 The symmetric operator $\HH^*$ admits a
Friedrich's self-adjoint extension  with the domain $H^4_0(B_1)$,
discrete spectrum,  and compact resolvent in $L^2(B_1)$, where
$B_1 \subset \ren$ is the unit ball, iff
 \be
 \label{5.6}
  \tex{
  c=p D \le c_{\rm H}= \frac{(N-2)^2}{4}.
  }
  \ee
  \end{proposition}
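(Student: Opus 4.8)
We only describe the strategy; the details are standard once the right structure is isolated. The perturbation entering $\HH^*$ in \ef{5.5} is a smooth (regular) operator away from $y=0$, and the weight $\rho^*$ from \ef{4.2} is smooth at the origin, so the sole delicate point is the inverse-square singularity of $\HH^*$ at $y=0$; it therefore suffices to study $\HH^*$ on $C_0^\iy(B_1\setminus\{0\})$ together with the homogeneous boundary conditions on $\p B_1$ selected by the Friedrichs construction. The plan is to exploit the factorization
\[
\HH^* = -\D\big(\D+\tfrac{c}{|y|^2}\big) = -\D\circ(-\mc{S}),\qquad \mc{S}:=-\D-\tfrac{c}{|y|^2},
\]
which is exactly the structure dictated by the $H^{-1}$-metric in which the C--H flow is gradient (cf.\ \ef{ps1}--\ef{ps2}): modulo the factor $-\D$, everything is governed by the inverse-square Schr\"odinger operator $\mc{S}$ on $B_1$, whose semiboundedness and spectral discreteness are controlled by the classical Hardy inequality, with sharp constant precisely $c_{\rm H}=(N-2)^2/4$ as in \ef{5.6}. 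The same threshold is visible directly on freezing the singular part of $\HH^*$ near $y=0$: the resulting Fuchsian operator has indicial exponents $2$, $4-N$ and $\a_\pm=\tfrac12\big(-(N-2)\pm\sqrt{(N-2)^2-4c}\,\big)$, and $\a_\pm$ remain real --- so that the singular bundle of solutions at the origin is non-oscillatory --- exactly when $c\le c_{\rm H}$.

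\emph{The ``if'' direction, $c\le c_{\rm H}$.} By Hardy's inequality on $H^1_0(B_1)$ the form $u\mapsto\int_{B_1}|\n u|^2-c\int_{B_1}\tfrac{u^2}{|y|^2}$ is bounded below and, for $c<c_{\rm H}$, coercive in $H^1_0(B_1)$; hence $\mc{S}$ has a Friedrichs self-adjoint extension, semibounded below, with form domain $H^1_0(B_1)$, and by the compactness of $H^1_0(B_1)\hookrightarrow L^2(B_1)$ it has compact resolvent and discrete spectrum. Pushing this through the factorization, $-\HH^*$ is similar to the positive self-adjoint operator $\mc{S}^{1/2}(-\D)\mc{S}^{1/2}$ (with $-\D$ the Dirichlet Laplacian on $B_1$), which again has compact resolvent; hence $\s(\HH^*)$ is discrete with compact resolvent. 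Finally, the two factors each contribute one homogeneous boundary condition on $\p B_1$ (the Navier pair $u=\D u=0$), and this together with interior elliptic regularity away from $y=0$ and the fact that, for $c<c_{\rm H}$, the inverse-square term is a relatively form-bounded perturbation of $\D^2$ of relative bound $<1$, identifies the operator domain as the Friedrichs domain $H^4_0(B_1)$ of $\D^2$.

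\emph{The ``only if'' direction, $c>c_{\rm H}$.} Here $\a_\pm=\tfrac12(-(N-2)\pm i\o)$ with $\o=\tfrac12\sqrt{4c-(N-2)^2}>0$, so Hardy's inequality fails: truncating near $y=0$ the oscillatory solutions $|y|^{-(N-2)/2}\cos(\o\log|y|)$ and $|y|^{-(N-2)/2}\sin(\o\log|y|)$ of the frozen homogeneous equation produces $u_n\in C_0^\iy(B_1\setminus\{0\})$, concentrating at the origin, with $\int_{B_1}|\n u_n|^2-c\int_{B_1}\tfrac{u_n^2}{|y|^2}\to-\iy$ while $\|u_n\|_{L^2(B_1)}$ stays bounded. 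Thus $\mc{S}$ admits no semibounded self-adjoint extension, and, via the factorization, $-\HH^*$ admits no semibounded self-adjoint realization with compact resolvent --- in particular none with domain $H^4_0(B_1)$: the origin becomes a ``limit-circle with oscillation'' endpoint that carries no canonical Friedrichs boundary condition. This completes the equivalence.

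\emph{Main obstacle.} The genuinely delicate case is the borderline $c=c_{\rm H}$, where coercivity of the Hardy form degenerates: $\mc{S}$ is still bounded below but its form norm no longer dominates the $H^1_0(B_1)$-norm, so to retain compactness of the embedding of the form domain into $L^2(B_1)$ --- hence discreteness of the spectrum --- one must invoke the logarithmically improved Hardy (respectively Hardy--Rellich) inequality on $B_1$, and then re-run the domain analysis so as to still land on $H^4_0(B_1)$ at the threshold. The remaining ingredients are routine.
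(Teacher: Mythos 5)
Your proposal is correct and follows essentially the same route as the paper: the factorization $\HH^*=-\D\circ(-\mc S)$ with $\mc S=-\D-c|y|^{-2}$, semiboundedness via the sharp Hardy inequality read in the $H^{-1}(B_1)$ metric, and discreteness/compact resolvent from the compact embedding of the form domain (the paper cites Maz'ja for this last point). You in fact go further than the paper's two-line argument, which only records the ``if'' direction, by also supplying the ``only if'' direction via the oscillatory solutions $|y|^{-(N-2)/2}\cos(\o\log|y|)$ and by flagging the delicate borderline case $c=c_{\rm H}$.
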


  \noi{\em Proof.} Indeed, \ef{5.6} is just a corollary of the
  classic
  {\em Hardy} {\em  inequality}
   \be
   \label{5.7}
   \tex{
 \frac {(N-2)^2} {4}  \int\limits_{B_1} \frac {u^2}{|y|^2} \le
 \int\limits_{B_1}
    |\n u|^2 \forA u \in H^1_0(B_1),
    }
    \ee
    where the constant is sharp.
 Therefore, \ef{5.6} implies that the operator $\HH^*$ is
 semi-bounded (in say metric of $H^{-1}(B_1)$), whence the
 necessary properties.
     For compact embedding of the corresponding spaces,
    see Maz'ja \cite[p.~65, etc.]{Maz}. $\qed$

    \ssk

It follows that \ef{5.6} holds in the supercritical
Joseph--Lundgren range
\be
 \label{S2NN}
  \tex{
   p   \ge p_{\rm JL}= 1+ \frac 4{N-4-2\sqrt{N-1}} \forA N \ge
  11,
  }
  \ee
   which, by obvious reasons, coincides with that for \ef{80} in
   \cite{HVsup}.

\subsection{Inner Region I}

Thus, we assume that \ef{5.6} holds and $\s(\hat \BB^*)=\{\hat
\l_k\}$ is discrete, with some eigenfunctions $\{\hat \psi^*_\b,
\, |\b|=k\}$. Furthermore, it is also convenient to assume that
the spectrum is (at least partially) {\em real}. To justify such
an assumption for this non-self-adjoint operator, we rewrite
\ef{5.5} in the form
 \be
 \label{5.11}
  \tex{
 \hat \BB^*= \BB^* - c \D\big({\frac 1{|y|^2} \, I}\big) - \frac 1{2(p-1)}\, I \whereA c=p D
 }
  \ee
 and $\BB^*$ is the standard adjoint operator \ef{4.2} with the real
 spectrum shown in Lemma \ref{lemSpec2}. Actually, this
 means that $\BB^*$ admits a natural self-adjoint representation
 in the space $l^2_{\rho^*}$ of sequences, where it is also sectorial,
 \cite{2mSturm}. Therefore, we claim that the real spectrum of \ef{5.11} can
 be obtained by branching-perturbation theory (see Kato \cite{Kato}) from the spectrum
  $$
 \tex{
  \{\l_\b=- \frac {k}{4}- \frac 1{2(p-1)}, \, k=|\b| \ge 0\}
   }
   $$
    of
 $\BB^*-\frac 1{2(p-1)}\, I$ at $c=0$.
Indeed, this is proved by the following result.

\begin{proposition}
The operators \eqref{5.11}
$$  \tex{
 \hat \BB^*= \BB^* - c \D\big({\frac 1{|y|^2} \, I}\big) - \frac 1{2(p-1)}\, I \whereA c=p D,
 }
  $$
  converge to the operator
   $$
    \tex{
    \BB^* -\frac 1{2(p-1)}\, I,
 }
    $$
   as $c\rightarrow 0$, in the generalized sense of Kato.
\end{proposition}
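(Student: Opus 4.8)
\noi {\em Proof (strategy).} Recall that a family of closed operators converges ``in the generalized sense of Kato'' when it converges in the gap (graph) metric $\hat\delta$; see \cite[Ch.~IV]{Kato}. The plan is to view $\hat\BB^*$ in \ef{5.11} as a relatively bounded perturbation of the \emph{fixed} operator $T:= \BB^* - \frac1{2(p-1)}\, I$ by the operator
\[
  P u := -\, \D\Big( \frac 1{|y|^2}\, u \Big), \qquad \mbox{so that} \qquad \hat\BB^* = T + c\, P, \quad c = pD ,
\]
and to invoke Kato's theory of relatively bounded perturbations: if $P$ is $T$-bounded, i.e. $\|Pu\|_{\LLL} \le a\,\|Tu\|_{\LLL} + b\,\|u\|_{\LLL}$ with finite $a,b$ on the domain of $T$, then $c\,P$ has $T$-bound $\le c(a+b) \to 0$ as $c \to 0$, so for small $c$ the operator $T + c\,P$ is closed on the common domain $D(T)$ and $\hat\delta(T + c\,P,\, T) \to 0$. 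This is precisely convergence in the generalized sense, and it moreover entails norm convergence of the resolvents on the resolvent set of $T$ (which is nonempty and stable, since $T$ is a bounded multiple-of-$I$ shift of $\BB^*$ and hence has compact resolvent and discrete spectrum by Lemma \ref{lemSpec2}). Since a bounded multiple of $I$ does not affect relative boundedness, it suffices to prove that $P$ is relatively bounded with respect to $\BB^*$.

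First I would reduce the $T$-bound estimate to a Rellich--Hardy estimate against the bi-Laplacian $\D^2$, using that in the weighted setting of Lemma \ref{lemSpec2} the first-order term $-\frac14 y\cdot\n$ is a lower-order (relatively compact) perturbation of $-\D^2$, so $\|\D^2 u\|_{\LLL} \le C\|\BB^* u\|_{\LLL} + C'\|u\|_{\LLL}$. Expanding by the Leibniz rule, $P u = -\frac{\D u}{|y|^2} - 2\,\n u \cdot \n(|y|^{-2}) - u\,\D(|y|^{-2})$, with $|\n(|y|^{-2})| = 2|y|^{-3}$ and $|\D(|y|^{-2})| = 2|N-4|\,|y|^{-4}$, so pointwise
\[
  |Pu| \le \frac{|\D u|}{|y|^2} + \frac{4\,|\n u|}{|y|^3} + \frac{2|N-4|\,|u|}{|y|^4} .
\]
Away from the origin all three weights are bounded, contributing only $O(\|u\|_{L^2})$ once the exponential weight $\rho^*$ is taken into account (at infinity it only improves matters). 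Near the origin each term is dominated by $\|\D^2 u\|_{L^2}$ by iterating Hardy's inequality \ef{5.7} and its second-order Rellich analogue (applied to $u$, $\n u$ and $\D u$ in turn) on the Friedrichs domain $H^4_0(B_1)$ supplied by Proposition \ref{Pr.Har1}; for the required compact embeddings of these weighted spaces I would cite Maz'ja \cite{Maz}, as in the proof of Proposition \ref{Pr.Har1}. This yields $\|Pu\|_{\LLL} \le a\|\BB^* u\|_{\LLL} + b\|u\|_{\LLL}$ with finite $a,b$, valid in the ranges \ef{5.3} and \ef{5.6} (in particular in the Joseph--Lundgren range \ef{S2NN}). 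By the discussion above this closes the argument: $\hat\BB^* \to \BB^* - \frac1{2(p-1)}\, I$ in the generalized sense of Kato as $c \to 0$, and the branching of the real eigenvalues $\{\hat\l_\b\}$ from $\{-\frac{|\b|}4 - \frac1{2(p-1)}\}$ then follows from the compactness of the resolvent of the limit operator.

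The hard part will be the Rellich--Hardy relative bound near the origin, $\|\D(u/|y|^2)\|_{L^2} \lesssim \|\D^2 u\|_{L^2} + \|u\|_{L^2}$: one must iterate the sharp inequalities with some care about admissible dimensions and, more importantly, verify that they are available on precisely the Friedrichs domain $H^4_0(B_1)$ identified in Proposition \ref{Pr.Har1}, and that the borderline coupling $c = c_{\rm H} = \frac{(N-2)^2}4$ does not spoil finiteness of the $T$-bound (it affects only whether that domain coincides with $H^4_0(B_1)$, not the estimate itself). A secondary, essentially cosmetic point is that $\BB^*$ is non-self-adjoint, so one genuinely works with the gap metric rather than with norm-resolvent convergence of self-adjoint operators; but Kato's relatively-bounded-perturbation results do not require self-adjointness, so this is not a real obstruction.
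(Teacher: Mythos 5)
Your proposal follows the same basic strategy as the paper -- treat $c\,\D(|y|^{-2}I)$ as a perturbation that becomes small as $c\to 0$ and conclude gap (generalized) convergence -- but it differs in the crucial choice of the norm against which the perturbation is measured, and this difference matters. The paper estimates $\|\hat\BB^*u-(\BB^*-\frac1{2(p-1)}I)u\|_{L^2(B_1)}\le c\,K\,\|u\|_{W^{2,2}(B_1)}$ ``thanks to Hardy's inequality'' and stops there; but after expanding $\D(|y|^{-2}u)=|y|^{-2}\D u+2\n u\cdot\n(|y|^{-2})+u\,\D(|y|^{-2})$, the terms $|y|^{-2}|\D u|$ and $|y|^{-3}|\n u|$ near the origin are \emph{not} controlled by $\|u\|_{W^{2,2}(B_1)}$: iterated Hardy--Rellich inequalities push these to third- and fourth-order quantities ($\int|y|^{-4}|\D u|^2\lesssim\int|\D^2u|^2$, etc.), so the operator $\D(|y|^{-2}I)$ is genuinely of the same order as $\D^2$ at the origin and is only \emph{relatively} bounded with respect to it on the Friedrichs domain $H^4_0(B_1)$. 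Your version -- proving a $T$-bound $\|Pu\|\le a\|Tu\|+b\|u\|$ and invoking Kato's theorem that a $T$-bounded perturbation with $T$-bound tending to $0$ gives $\hat\delta(T+cP,T)\to0$ -- is the estimate that the argument actually requires, and it is what makes the conditions \ef{5.6}/\ef{S2NN} and the domain $H^4_0(B_1)$ from Proposition \ref{Pr.Har1} enter the proof in an honest way; the price is that the Rellich--Hardy relative bound, which you correctly flag as the hard step, still has to be carried out (the paper's two-line shortcut silently assumes it). In short: same architecture, but your route repairs the key inequality by measuring the perturbation in the graph norm of the unperturbed operator rather than in $W^{2,2}$, at the cost of a genuinely fourth-order weighted estimate that remains to be written down in detail.
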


 \noi{\em Proof.} Indeed, for each $u\in W^{2,2}(B_1)$ we have that
 $$
 \tex{ \left\| \hat \BB^* u- (\BB^*-\frac 1{2(p-1)}\, I)u\right\|_{L^2(B_1)}
 \leq c \left\| \D\big({\frac 1{|y|^2} \, I}\big)u\right\|_{L^2(B_1)}.}
 $$
 Thanks to Hardy's inequality \eqref{5.6}, we arrive at
 $$
 \tex{ \left\| \hat \BB^* u- (\BB^*-\frac 1{2(p-1)}\, I)u\right\|_{L^2(B_1)}
 \leq c K \left\| u\right\|_{W^{2,2}(B_1)},}
 $$
 with $K>0$, a positive constant.
 Therefore, for any $\e >0$, there exists $c_0$ such that
 $$
 \tex{ \left\| \hat \BB^* u- (\BB^*-\frac 1{2(p-1)}\, I)u\right\|_{L^2(B_1)}
 \leq \e \left\| u\right\|_{W^{2,2}(B_1)},}
 $$
 for all $c\in (0,c_0)$ and  $u\in W^{2,2}(B_1)$.
 $\qed$

\vspace{0.2cm}

This shows the convergence of the graphs of the operator $\hat \BB^*$ to the graph of the operator
 $\BB^*$ and, hence, the previous claim is proved.



    \ssk

 Next, the branch must be  extended to $c=p D$, which is
 also a difficult mathematical problem; see \cite[\S~6]{GK1} for some extra details, which are not
  necessary here in such a formal blow-up analysis.

Thus, we fix a certain exponentially decaying pattern in {\em
Inner Region} I:
 \be
 \label{5.12}
 Y(y,\t) = C{\mathrm e}^{\hat \l_\b \t} \hat \psi_\b^*(y)+... \asA
 \t \to +\iy \quad (\hat \l_\b < 0).
  \ee
If there exists $\hat \l=0 \in \s(\hat \BB^*)$, the expansion will
correspond to a  centre subspace  one. Note that \ef{5.12}
includes all the non-radial linearized blow-up patterns.

\subsection{Matching with Inner Region II close to the origin}

In order to match \ef{5.12} with  a smooth bounded flow close to
$y=0$, which we call {\em Inner Region} II, one needs the
behaviour of the eigenfunction $\hat \psi_\b^*(y)$ as $y \to 0$.
To get this, without loss of generality, we assume the radial
geometry. Then,
 the principal operator in the
eigenvalue problem
 \be
 \label{5.131}
  \tex{
  \HH^* \hat \psi^*+... = \l \hat \psi^* \asA y \to 0
  }
  \ee
yields the following characteristic polynomial
 \be
 \label{5.13}
  \hat \psi^*(y) = |y|^\g+... \LongA
 H_c(\g) = (\g-2)(\g-3)[\g^2+(N-2)\g + c]=0.
 \ee
 Obviously, the roots $\g=2$ or 3 are not suitable, so that we
 have
  \be
  \label{91}
   \tex{
\g^2+(N-2)\g + c \LongA \g_\pm=- \frac{N-2}2 \pm \sqrt{
\frac{(N-2)^2}4-c},
 }
 \ee
 which makes sense in the subcritical range $c=pD <
 \frac{(N-2)^2}4$.

 Consider the most interesting critical  case
 \be
 \label{5.14}
  \tex{
c \equiv  p D = c_{\rm H}= \frac{(N-2)^2}{4}.
 }
 \ee
 Then, there exists the double root
 $
 \g_{1,2}= - \frac{N-2}2< 0,
 $
which generates two $L^2$-behaviours:
 \begin{equation}
 \label{3.61}
\hat \psi_1^*(y) = |y|^{-\frac{N-2}2}\ln |y|(1+o(1)) \quad
\mbox{and} \quad \hat \psi_2^*(y) = |y|^{-\frac{N-2}2}
(1+o(1))\quad \mbox{as} \,\,\, y \to 0.
 \end{equation}
Note that $H^1_0$-approximations of $\hat \psi_2^*$ establish that
$c_{\rm H}$ is the best constant in (\ref{5.6}).
 Thus, in $L^2$ in the radial (ODE) setting, the deficiency indexes
 of ${\bf H}^*$ are  $(2,2)$, and
 the straightforward conclusion on
the discreteness of the spectrum of Friedrich's extension of
$\HH^*$ follows, \cite[p.~90]{Nai1}.
Note that
 this leads to the so-called
{\em principal solution} with the minimally possible
 growth at the singular point.

Overall, this gives the following behaviour of the proper
eigenfunctions at the origin:
 \be
 \label{5.15}
 \tex{
 \hat \psi_\b^*(y) =- \nu_\b |y|^{-\frac{N-2}2} +... \asA y \to 0
 \quad (\nu_\b >0 \,\,\, \mbox{are normalization constants}).
  }
  \ee
This allows detection of the rate of blow-up of such patterns by
estimating the maximal value of the expansion near the origin:
 \be
 \label{5.17}
  \tex{
  v_\b(y,\t)= C_*|y|^{-\frac 2{p-1}}- \nu_\b C {\mathrm e}^{\hat
  \l_\b \t} |y|^{-\frac{N-2}2} +... \asA y \to 0 \andA \t \to
  +\iy,
  }
  \ee
  where we observe the natural condition of matching:
   \be
   \label{5.18}
   \nu_\b C>0.
    \ee
 Calculating the absolute maximum in $y$ of the function on the
 right-hand side of \ef{5.18} (this is a standard and justified trick in some R--D
 problems; see e.g., \cite{Dold1})  yields an exponential
 divergence:
 \be
 \label{5.19}
 \tex{
 \|v_\b(\cdot,\t)\|_\iy = d_\b {\mathrm e}^{\rho_\b\t}+...\,
 \whereA \rho_\b= \frac {4|\hat \l_\b|}{(N-2)(p-p_{\rm S})}>0
 \quad \big(p > p_{\rm S}\big),
 }
 \ee
  and $d_\b>0$ are some constants. Depending on the
 spectrum $\{\hat \l_\b<0\}$, \ef{5.19} can determine a countable
 set of various Type II blow-up asymptotics.

 Let us specify more
clearly the necessary matching procedure. In a standard manner,
we return to the original rescaled equation \ef{3.2} and perform
the rescaling in Region II according to \ef{5.19}:
 \be
 \label{5.20}
  \tex{
  v(y,\t) = {\mathrm e}^{\rho_\b \t} w(\xi,s), \quad
  \xi={\mathrm e}^{\mu_\b \t}y, \quad \mu_\b=
  \frac{(p-1)\rho_\b}2, \quad 
  s= \frac 1{(p-1)\rho_\b} \, {\mathrm e}^{(p-1)\rho_\b \t}.
  }
  \ee
 Then $w$ solves the following exponentially perturbed uniformly parabolic
 equation:
  \be
  \label{5.21}
   \tex{
   w_s= - \D^2 w - \D( |w|^{p-1}w) - \frac 1{(p-1)\rho_\b} \, \frac 1
   s\, \big[\big(\frac 14 + \mu_\b\big) \xi \cdot \n w+ \big(
   \frac 1{2(p-1)} + \rho_\b\big) w \big].
    }
    \ee
As above, we arrive at a stabilization problem as $s \to +\iy$  to
a bounded stationary solution, which is widely used in blow-up
applications (see examples in \cite{AMGV}). In general, once the
uniform boundedness of the orbit $\{w(s), \,\, s>0\}$ is
established (an open problem), the passage to the limit in
\ef{5.21} as $s \to +\iy$ is a standard issue of asymptotic
parabolic theory, even for the present higher-order case. Recall
that the limit equation
 $$
w_s= - \D^2 w - \D( |w|^{p-1}w)
 $$
 is a gradient system in $H^{-1}$; cf. \ef{ps1}, \ef{ps2}.

Our blow-up patterns correspond to the stabilization uniformly on
compact subsets:
 \be
 \label{5.22}
 w(\xi,s) \to W(\xi), \,\, s \to +\iy, \,\,\mbox{where} \,\, \D W+ |W|^{p-1}W=0,
  \,\, \xi \in\ren, \,\, W(0)=d_\b,
  \ee
for all admissible $|\b|=0,1,2,...\,$.
As is well-known, for $p \in (1,p_{\rm S})$, the stationary
problem in \ef{5.22} does not admit nontrivial nonnegative
solutions, while for any $p \ge p_{\rm S}$, such solutions always
exist. This is not different from the analysis in \cite{HVsup} for
\ef{80}, so we can omit some details.

\ssk

The supercritical case $p > p_{\rm JL}$ is analyzed similarly,
with some natural changes in asymptotics of eigenfunctions and in
equations such as \ef{5.21}; see \cite[\S~5]{Gal5types}.

Let us comment on an extended semigroup for $t>T$. Since according
to our construction, this Type II blow-up leaves less singular
 final time profile $u(x,T^-)$ than the SSS (see \ef{5.18})),
  the similarity Type I blow-up
 via \ef{72} are expected to be extensible for $t>T$, so that this
 blow-up is expected to be incomplete. However, this does not guarantee uniqueness of such
 an
 extension at all, which is always a hard problem. Moreover, sometimes, for special kinds of
 singularities for nonlinear PDEs the uniqueness problem is not solvable (a so-called
 {\em  principal non-uniqueness}; see an example in
  \cite{GalNDE5}).

\subsection{On related non-radial blow-up patterns}

These can be predicted in a couple of ways. Firstly, one can start
with a non-radial SSS solving the elliptic equation in \ef{6.1},
 if such solutions exist.
    Secondly, under the condition
\ef{5.6}, a non-radial eigenfunction $\psi_\b^*(y)$ (e.g.,
corresponding to an ``angular" logarithmic blow-up swirl obtained
by angular separation of variables, see \cite[\S~3]{Gal5types}) of
$\hat \BB^*$ can be taken into account. Then the matching will assume
using non-radial entire solutions of \ef{5.12}, which  deserves
further  study.


\begin{appendix}
\section*{Appendix a: towards existence of similarity blow-up profiles of (\ref{73})}
 \setcounter{section}{1}
\setcounter{equation}{0}


 \begin{small}

Without loss of generality, we consider the ODE \ef{73} for $N=1$
(similar ideas apply to the radial case for any $N \ge1$ and
$p<p_{\rm S}$):
 \be
 \label{aa1}
   \left\{
    \begin{matrix}
  -f^{(4)}- \frac 14\, y f'- \frac 1{2(p-1)}\, f -
  (|f|^{p-1}f)''=0 \inB \re_+,\ssk\\
    f'(0)=f'''(0)=0, \qquad\qquad\qquad\qquad\qquad\qquad\qquad
 \end{matrix}
 \right.
  \ee
  where we have put two symmetry boundary conditions at the
  origin.
  The strategy of proving existence of a solution of \ef{aa1} is to
  use the 2D asymptotic bundle \ef{f23} to ``shoot" two symmetry
  conditions at $y=0$. This shooting is well defined:

\ssk

  \noi{\bf Claim 1.} {\em For any} $A,C \in \re$, {\em the function} {\em $f=f(y;A,C)$ is well defined for all $y \in
  [0,\iy)$.}

\ssk

   Indeed, this follows from the obvious fact that the
  principal and leading operators,
   $$
   f^{(4)}=- (|f|^{p-1}f)''+... \LongA f''=- |f|^{p-1}f+...
   $$
   do not allow finite-$y$ blow-up.

One can see that, to match two boundary conditions in \ef{aa1},
specific ``oscillatory" properties of solutions $f(y;A,C)$ are
necessary. Then a ``min-max"-like procedure can be applied, when
we first, for a fixed parameter $A$, change $C$ in such a manner
to get existence of
 \be
 \label{aa2}
  \tex{
  C^+(A) = \inf \, \{C \in \re: \quad f'(0;A,C)>0\}.
   }
   \ee
 Existence of such an $C^+(A)$ is guaranteed at least
  for $A \gg 1$ by necessary oscillatory properties of solutions.
We next start to decrease $A$ in such a manner to guarantee that,
for some $A^+$, we obtain the necessary solution:
  \be
  \label{aa3}
  f'''(0;A^+,C^+(A^+))=0 \LongA \exists \,\,\, f(y)=
  f(y;A^+,C^+(A^+)),
   \ee
   but strong oscillatory properties are again required.

\ssk

 \noi{\bf Claim 2.} {\em There exists a subset of solutions  \ef{aa1}, which
 are oscillatory close to the origin.}

\ssk

Note that this is not straightforward: e.g., the bundle \ef{f23}
is not oscillatory at all. Therefore, we have to find oscillatory
structures of solutions, which {\em are not small}. To get a
functional ``topology" of oscillatory solutions, we perform the
similarity scaling (not invariant) in \ef{aa1},
 \be
 \label{aa4}
  \tex{
  f(y)=\e^{- \frac 1{2(p-1)}} g(z), \quad z= \frac y{\e^{1/4}},
  \quad \e>0,
  }
 \ee
 to get the following singularly perturbed ODE for $\e \ll 1$:
  \be
  \label{aa5}
  \tex{
  - \e \, g^{(4)}- \frac 14\, z g'- \frac 1{2(p-1)}\, g -
  (|g|^{p-1}g)''=0.
   }
   \ee
It follows that we can describe a set of solutions which are
oscillatory about the limit profile $g_0(z)$ satisfying
 \be
 \label{aa6}
  \left\{
   \begin{matrix}
- \frac 14\, z g_0'- \frac 1{2(p-1)}\, g_0 -
  (|g_0|^{p-1}g_0)''=0, \quad z>0, \ssk\\
  g_0(0)=1, \quad g'(0)=0. \qquad\qquad\qquad\qquad\qquad
   \end{matrix}
    \right.
    \ee
One can check by maximum principle arguments that such a $g_0(z) >
0$ exists on some interval $z \in [0,z_0)$ and is strictly
monotone there.

We next perform the linearization about $g_0$ by introducing the
new fast variable $Z$:
 \be
 \label{aa7}
  \tex{
  g(z)= g_0(z) + G(Z) \whereA Z= \frac z{\sqrt \e}.
   }
   \ee
 Substituting \ef{aa7} into \ef{aa5} and performing the linearization in the last term, we conclude
  that there exists a subset of solutions satisfying, uniformly on
  compacts in $Z$ the linearized ODE
   \be
   \label{aa8}
    -G^{(4)}_Z- p(g_0^{p}(z)\, G)''_{Z}+...=0,
     \ee
     where we omit further linear and nonlinear terms of the
     order, at least, $O(\e)$.
 Since $g_0(z) = 1+o(1)$ on such compacts, the linearized ODE
admits further simplification:
 \be
 \label{aa9}
-G^{(4)}- p \, G''=0.
 \ee
 It follows that there exists a 2D subset of purely oscillatory
 solutions about $g_0(z)$ with the typical behaviour, for $\e \ll
 1$,
  \be
  \label{aa10}
   \tex{
  g(z) = g_0(z) + B_1 \cos \big( \frac {\sqrt p \,z}{\sqrt {\e}}\big)+
  B_2 \sin \big( \frac {\sqrt p \,z}{\sqrt {\e}}\big)+ ...\, , \quad
  \mbox{where} \quad B_{1,2} \in \re.
}
 \ee
 We stop at this moment our analysis and refer to \cite[\S~4]{EGW}
 for a similar and much more detailed study of such oscillatory
 solutions of the third-order ODE \ef{aa1} for $p=3$.

 Finally, we claim that, using the oscillatory bundle \ef{aa10},
 it is possible to prove existence of a finite limit \ef{aa2} for $|A| \gg 1$.
 Extending this strategy further to get \ef{aa3} is  more
 difficult, but seems doable (in finite time).

 Anyway, the above analysis clearly shows (but does not prove
 completely rigorously) existence of a first blow-up profile
 $f(y)$. What is very difficult and remains entirely open is how
 to catch a possible multiplicity of solutions. It seems that the
 ideas of a $\mu$-bifurcation analysis (see Section \ref{S.6.2} and \cite[\S~4]{BGW1} for
 the RD equation \ef{80}), when \ef{aa2} is replaced
 by the equation
  \be
  \label{aa12}
 \tex{
-f^{(4)}- \mu\, y f'- \frac 1{2(p-1)}\, f -
  (|f|^{p-1}f)''=0 \inB \re_+,
  }
  \ee
changing $\mu$ up to the required $\mu= \frac 14$ and using
the discrete spectrum of the linearized operator (cf. \ef{4.2})
 \be
 \label{BB1}
 \BB^*(\mu)= -\D^2 - \mu y \cdot \n,
  \ee
  are not applicable for the C--H-type nonlinearities in the divergent form.

Finally, let us mention that, for $p \not = p_0=3$, when \ef{aa1}
reduces to the third order, we have not succeeded in obtaining
similarity profiles numerically by solving the ODE. Recall, that
Figure \ref{FF1} was obtained by a PDE numerical modelling.

\end{small}
\end{appendix}

\end{document}